\newcommand{\openR}{\mathbb{R}}
\newcommand{\pp}{\mathbb{P}}
\newcommand{\cov}{{\rm \mathbb{C}ov}}
\newcommand{\half}{\frac{1}{2}}
\newcommand{\ee}{\mathbb{E}\,}
\newcommand{\ii}{\mathcal{I}}
\newcommand{\qq}{\mathbb{Q}}
\newcommand{\R}{\mathbb{R}}
\newcommand{\eps}{\varepsilon}
\newcommand{\tr}{\mbox{tr}}
\newcommand{\dd}{{\rm d}}
\newcommand{\ch}{\mathcal{H}}
\newcommand{\bs}{\mbox{{\boldmath $\Sigma$}}}
\newtheorem{theorem}{Theorem}[section]
\newtheorem{proposition}[theorem]{Proposition}
\newtheorem{lemma}[theorem]{Lemma}
\newtheorem{corollary}[theorem]{Corollary}
\theoremstyle{definition}
\newtheorem{remark}[theorem]{Remark}
\newtheorem{problem}[theorem]{Problem}
\newtheorem{algorithm}[theorem]{Algorithm}
\begin{document}

\begin{frontmatter}

\title{Approximate factor analysis model building via alternating
I-divergence minimization
\protect} %\thanksref{T1}}
\runtitle{approximate factor analysis}
%\thankstext{T1}{we thank ourselves.}

\begin{aug}
\author{\fnms{Lorenzo} \snm{Finesso}
           %\thanksref{t1}\thankstext{t1}{thanks Lorenzo}
        \ead[label=e1]{lorenzo.finesso@isib.cnr.it}} \and
\author{\fnms{Peter} \snm{Spreij} \corref{} \ead[label=e2]{spreij@uva.nl}}
\affiliation{ISIB-CNR and Universiteit van Amsterdam}
\address{Lorenzo Finesso \\ Institute of Biomedical Engineering \\ ISIB-CNR  \\ Padova \\ Italy \\ \printead{e1}}
\address{Peter Spreij \\ Korteweg-de Vries Institute for Mathematics \\ Universiteit van Amsterdam \\ Amsterdam \\
The Netherlands \\ \printead{e2}}
%\thankstext{t2}{Footnote to the first author with the `thankstext' command.}
\runauthor{L. Finesso, P. Spreij}
\end{aug}

\begin{abstract}
Given a positive definite covariance matrix $\widehat \Sigma$, we
strive to construct an optimal \emph{approximate} factor analysis
model $HH^\top +D$, with $H$ having a prescribed number of columns
and $D>0$ diagonal. The optimality criterion we minimize is the
I-divergence between the corresponding normal laws. Lifting the
problem into a properly chosen larger space enables us to derive
an alternating minimization algorithm \`a la Csisz\'ar-Tusn\'ady
for the construction of the best approximation. The convergence
properties of the algorithm are studied, with special attention
given to the case where $D$ is singular.
\end{abstract}

\begin{keyword}[class=AMS]
\kwd[Primary ]{62H25} \kwd[; secondary ]{62B10}
\end{keyword}

\begin{keyword}
\kwd{approximate factor analysis} \kwd{I-divergence} \kwd{alternating minimization}
\end{keyword}

\end{frontmatter}

\section{Introduction}\label{section:intro}
\setcounter{equation}{0}
Factor analysis (FA), in its original formulation, \! deals with the linear statistical model
\begin{equation} \label{eq:fam}
Y = HX+\eps,
\end{equation}
where $H$ is a deterministic matrix, $X$ and $\eps$ are independent
random vectors, the first with dimension smaller than $Y$, the
second with independent components. What makes this model attractive
in applied research is the \textit{data reduction} mechanism built
in it. A large number of observed variables $Y$ are explained in
terms of a small number of unobserved (latent) variables $X$
perturbed by the independent noise $\eps$. Under normality
assumptions, which are the rule in the standard theory, all the laws
of the model are specified by covariance matrices. More precisely,
assume that $X$ and $\eps$ are zero mean independent normal vectors
with $\cov(X)=P$ and $\cov(\eps)=D$, where $D$ is diagonal. It
follows from~(\ref{eq:fam}) that $\cov(Y)= HPH^\top +D$. Since in
the present paper, basically only covariances are considered, the
results obtained will also be valid, in a weaker sense, in a non Gaussian environment.

Building a factor analysis model of the observed variables requires the
solution of a difficult algebraic problem. Given $\widehat{\Sigma}$,
the covariance matrix of $Y$, find the triples $(H, P, D)$ such that
$\widehat{\Sigma} = HPH^\top +D$. As it turns out, the right
tools to deal with the construction of an exact FA model come from the theory of stochastic
realization, see~\cite{finessopicci} for an early contribution on
the subject. Due to the structural constraint
on $D$, assumed to be diagonal, the existence and uniqueness of a
FA model are not guaranteed.

In the present paper we strive to construct an optimal \emph{approximate} FA model. The
criterion chosen to evaluate the closeness of covariances is the
I-divergence between the corresponding normal laws. We propose an
algorithm for the construction of the optimal approximation,
inspired by the alternating minimization procedure of~\cite{csiszar}
and~\cite{finessospreij}.

The remainder of the paper is organized as follows. The FA model is
introduced in Section~\ref{section:model} and the approximation
problem is posed and discussed in
Section~\ref{sec:problemstatement}. Section~\ref{section:lift}
recasts the problem as a double minimization in a larger space,
making it amenable to a solution in terms of alternating
minimization. It will be seen that both resulting I-divergence
minimization problems satisfy the so-called Pythagorean rule,
guaranteeing the optimality. In Section~\ref{section:algo}, we
present the alternating minimization algorithm, provide alternative
versions of it, and study its asymptotical properties. We also point
out, in Section~\ref{sec:em}, the relations and differences between
our algorithm and the EM-algorithm for the estimation of the
parameters of a factor analysis model.
Section~\ref{section:SingularD} is dedicated to a constrained
version of the optimization problem (the singular $D$ case) and the
pertinent alternating minimization algorithm. The study of the
singular case also sheds light on the boundary limit points of the
algorithm presented in Section~\ref{section:algo}. In the Appendix
we have collected some known properties on matrix inversion and
I-divergence between normal distributions for easy reference, as
well as most proofs of the technical results.

The present paper is a considerably extended version
of~\cite{fs2007}, moreover providing easier proofs of some of the
results already contained in that reference.

\section{The model}\label{section:model}
\setcounter{equation}{0}

Consider independent random vectors $Z$ and $\eps$, of respective
dimensions $k$ and $n$, both normally distributed with zero mean.
For simplicity $P=\cov(Z)$ is assumed to be invertible. For
any $n \times k$ matrix $L$ let the random vector $Y$, of dimension
$n$, be defined by
\begin{equation}\label{eq:yalt}
Y=LZ+\eps.
\end{equation}
The linear model~(\ref{eq:yalt}), ubiquitous in Statistics, becomes
the standard Factor Analysis (FA) model under the extra constraints
$$k<n, \quad {\rm and} \quad \cov(\eps)=D\ge0, \,\,\, {\rm diagonal.}$$
In many applications one starts with a given, zero mean normal
vector $Y$, and wants to find the parameters $P$, $L$, and $D$ of a
FA model for $Y$. The above constraints impose a special structure
to the covariance of $Y,$
\begin{equation}\label{eq:cov1}
\cov(Y) = LPL^\top + D,
\end{equation}
which is non generic since $k < n$ and $D$ is diagonal, therefore not
all normal vectors $Y$ admit a FA model. To elucidate this, consider
the joint normal vector
\begin{equation}\label{eq:modelL}
V= \begin{pmatrix}
Y \\ Z
\end{pmatrix}
=
\begin{pmatrix}
L & I \\ I & 0
\end{pmatrix}
\begin{pmatrix}
Z \\ \eps
\end{pmatrix},
\end{equation}
whose covariance matrix is given by
\begin{equation} \label{eq:covV}
\cov(V)=\begin{pmatrix} LPL^\top + D & LP  \\
PL^\top & P
\end{pmatrix}.
\end{equation}
The constraints imposed on $\cov(V)$ by the FA model are related to a conditional independence property.
\begin{lemma}\label{lemma:ci}
%\footnote{the remark 2.2 in the previous version of the paper seems to indicate
%that the lemma is not true if $P\ge0$ instead of $P>0$ why is that?? Why can't we take $P\ge0$? wouldn't that
%be enough to have $\Sigma\ge 0$?}
Let $Y\in \R^n$ be a zero mean normal vector, then
$\cov(Y) = LPL^\top + D$, for some $(L, P, D)$, with $L \in \R^{n \times k}$,
$P>0$, and diagonal $D\ge0$   if and only if
there exists a $k$-dimensional zero mean normal vector $Z$, with $\cov(Z)=P$, such that the components of $Y$ are conditionally independent given $Z$.
\end{lemma}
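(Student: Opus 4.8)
The plan is to establish the two implications separately. The workhorse in both directions is the elementary description of conditioning within a jointly Gaussian family: if $(Y,Z)$ is jointly normal and zero mean with $\cov(Z)=P>0$, then the conditional law of $Y$ given $Z$ is normal, its mean is the linear function $L Z$ with $L=\cov(Y,Z)P^{-1}$, and its covariance is the \emph{deterministic} matrix $\cov(Y)-\cov(Y,Z)P^{-1}\cov(Z,Y)$, i.e.\ the Schur complement of $P$ in the joint covariance of $(Y,Z)$. These identities are collected in the Appendix, so I would simply cite them.

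For necessity, I would start from $\cov(Y)=LPL^\top+D$ and realize the covariance concretely: on an auxiliary probability space pick independent zero mean normal vectors $Z$ and $\eps$ with covariances $P$ and $D$ respectively and set $\widetilde Y=LZ+\eps$, which has the law of $Y$ and makes $(\widetilde Y,Z)$ jointly normal with covariance exactly (\ref{eq:covV}). Plugging into the conditioning formulae gives conditional mean $LP\,P^{-1}Z=LZ$ and conditional covariance $LPL^\top+D-LP\,P^{-1}PL^\top=D$. Since $D$ is diagonal, conditionally on $Z$ the components of $\widetilde Y$ are uncorrelated jointly normal random variables, hence conditionally independent; transferring this back to $Y$ produces the required latent vector.

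For sufficiency, I would take the vector $Z$ furnished by the hypothesis, with the pair $(Y,Z)$ jointly normal and $\cov(Z)=P$, and read off from joint normality that $\ee[Y\mid Z]=LZ$ for $L:=\cov(Y,Z)P^{-1}$ and that $\cov(Y\mid Z)$ is a constant matrix $\Delta\ge0$. Conditional independence of the components of $Y$ makes $\Delta$ diagonal; set $D:=\Delta$. The law of total variance $\cov(Y)=\ee[\cov(Y\mid Z)]+\cov(\ee[Y\mid Z])$ then delivers $\cov(Y)=D+\cov(LZ)=LPL^\top+D$.

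I expect the only genuinely delicate point to be the status of joint normality of $(Y,Z)$ in the sufficiency direction: it is exactly what forces the conditional mean to be linear, so that $\cov(\ee[Y\mid Z])$ factors as $LPL^\top$ with $L$ an $n\times k$ matrix of the right size; if one only assumed $Z$ normal with no joint normality, the conditional mean could be an arbitrary nonlinear function of $Z$ and the argument would collapse. I would therefore make sure the statement is read with $(Y,Z)$ jointly normal. A minor secondary point is the reminder that, for Gaussian vectors, conditional uncorrelatedness is equivalent to conditional independence; apart from that, the proof is just the Schur complement bookkeeping already available in the Appendix.
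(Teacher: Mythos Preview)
Your proof is correct and follows essentially the same route as the paper's: for necessity you build the joint covariance~(\ref{eq:covV}) (the paper does this directly, you via $\widetilde Y=LZ+\eps$, which is equivalent) and read off $\cov(Y\mid Z)=D$ from the Schur complement formula~(\ref{eq:condcov}); for sufficiency you use joint normality to get $\ee[Y\mid Z]=LZ$ and a constant diagonal conditional covariance, then apply the law of total variance. Your explicit remark on the necessity of joint normality in the converse direction is a useful clarification that the paper leaves implicit.
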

\begin{proof}
Assume that $\cov(Y)=LPL^\top + D$ and construct a matrix $\Sigma$
as in the right hand side of~(\ref{eq:covV}). Clearly $\Sigma \ge0$,
since $P>0$ and $D\ge0$, and therefore it is a bonafide covariance
matrix, hence there exists a multivariate normal vector $V$ whose
covariance matrix is $\Sigma$. Writing $V^\top =
(Y^\top,Z^\top)^\top$ for this vector, it holds that $\cov(Z)=P$,
moreover $\cov(Y|Z)=D$ (see Equation~(\ref{eq:condcov})). The
conditional independence follows, since $D$ is diagonal by
assumption. For the converse assume there exists a random vector $Z$
as prescribed in the Lemma. Then $\cov(Y|Z)$ is diagonal by the
assumed conditional independence, while $E(Y|Z)=LZ$ for some $L$,
being a linear function of $Z$. We conclude that $\cov(Y) =
\cov(E(Y|Z)) + \cov(Y|Z) = LPL^\top + D$ as requested.
\end{proof}

The above setup is standard in system identification,
see~\cite{finessopicci}. It is often convenient to give an
equivalent reparametrization of model~(\ref{eq:modelL}) as follows.
Let $P=Q^\top Q$, where  $Q$ is a $k \times k$ square root of $P$,
and define $X=Q^{-\top} Z$. Model~(\ref{eq:modelL}) then becomes
\begin{equation*}
V= \begin{pmatrix}
Y \\ Z
\end{pmatrix}
=
\begin{pmatrix}
LQ^\top & I \\ Q^\top & 0
\end{pmatrix}
\begin{pmatrix}
X \\ \eps
\end{pmatrix},
\end{equation*}
where $\cov(X) = I$. The free parameters are now $H=LQ^\top$,
the diagonal $D\ge0$, and the invertible $k \times k$ matrix $Q$. In this paper we will mostly, but not always,
use the latter parametrization, which will be written directly in terms of the newly defined parameters as
\begin{equation}\label{eq:model2}
V= \begin{pmatrix}
Y \\ Z
\end{pmatrix}
=
\begin{pmatrix}
H & I \\ Q^\top & 0
\end{pmatrix}
\begin{pmatrix}
X \\ \eps
\end{pmatrix},
\end{equation}
for which
\begin{equation}\label{eq:cov}
\cov(V)=\begin{pmatrix} HH^\top + D & HQ  \\
(HQ)^\top & Q^\top Q &
\end{pmatrix},
\end{equation}
Note that, with this parametrization,
\begin{equation} \label{eq:y}
Y=HX+ \eps, \qquad \mbox{and} \quad \cov(Y) = HH^\top +D.
\end{equation}
For simplicity, in the first part of the paper, it will be assumed that $H$ has full column rank and $D>0$.

\section{Problem statement}\label{sec:problemstatement}
\setcounter{equation}{0}

Let $Y$ be an $n$ dimensional, normal vector, with zero mean and
$\widehat{\Sigma} = \cov(Y)$ given. As a consequence of
Lemma~\ref{lemma:ci} it is not always possible to find an exact FA
analysis model~(\ref{eq:modelL}), nor
equivalently~(\ref{eq:model2}), for $Y$. As it will be proved
below, one can always find a best approximate FA model. Here
`best' refers to optimizing a given criterion of closeness. In
this paper we opt for minimizing the I-divergence (\emph{a.k.a.}
Kullback-Leibler divergence). Recall that, for given probability
measures $\pp_1$ and $\pp_2$, defined on the same measurable
space, and such that $\pp_1\ll \pp_2$, the I-divergence is defined
as
\begin{equation} \label{eq:KLgen}
\ii(\pp_1||\pp_2)=\mathbb{E}_{\pp_1}\log
\frac{\dd\pp_1}{\dd\pp_2}.
\end{equation}
In the case of normal laws the I-divergence~(\ref{eq:KLgen}) can be
explicitly computed. Let $\nu_1$ and $\nu_2$ be two normal
distributions on $\openR^m$, both with zero mean, and whose
covariance matrices, $\Sigma_1$ and $\Sigma_2$ respectively, are
both non-singular. Then the distributions are equivalent and the
I-divergence $ \ii(\nu_1||\nu_2) $ takes the explicit form, see
Appendix~\ref{gauss},
\begin{equation}\label{eq:divsigma}
\ii(\nu_1||\nu_2)=\half \log
\frac{|\Sigma_2|}{|\Sigma_1|}-\frac{m}{2} +\half {\rm
tr}(\Sigma_2^{-1}\Sigma_1).
\end{equation}
Since, because of zero means, the I-divergence only depends on the
covariance matrices, we usually write $\ii(\Sigma_1||\Sigma_2)$
instead of $\ii(\nu_1||\nu_2)$. Note that $\ii(\Sigma_1||\Sigma_2)$,
computed as in~(\ref{eq:divsigma}), can be considered as a
I-divergence between two positive definite matrices, without
referring to normal distributions. Hence the approximation
Problem~\ref{problem2} below, is meaningful also without normality
assumptions.

The problem of constructing an approximate FA model, i.e. of
approximating a given covariance $\widehat{\Sigma}\in\openR^{n\times
n}$ by $HH^\top+D$, can be cast as the following
\begin{problem}\label{problem2}
Given $\widehat{\Sigma}>0$ of size $n\times n$ and an integer $k<n$,
minimize
\begin{equation}\label{eq:problem}
\ii(\widehat{\Sigma} || HH^\top+D) = \half \log
\frac{|HH^\top+D|}{|\widehat{\Sigma}|}-\frac{n}{2} +\half {\rm
tr}((HH^\top+D)^{-1}\widehat{\Sigma}),
\end{equation}
where the minimum, if it exists, is taken over all diagonal
$D\ge 0$, and $H \in \openR^{n\times k}$.
\end{problem}
Note that $\ii(\widehat{\Sigma} || HH^\top+D) < \infty$ if and only
if $HH^\top+D$ is invertible, which will be a standing assumption in
all that follows.

The first result is that a minimum in Problem~\ref{problem2} indeed
exists. It is formulated as Proposition~\ref{prop:exist} below,
whose proof, requiring results from Section~\ref{section:algo}, is
given in Appendix~\ref{section:tech}.
\begin{proposition}\label{prop:exist}
There exist matrices $H^*\in\openR^{n\times k}$, and nonnegative
diagonal $D^*\in\openR^{n\times n}$, that minimize the I-divergence
in Problem~\ref{problem2}.
\end{proposition}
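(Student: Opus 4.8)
The plan is to establish existence by a compactness-plus-coercivity argument on the function
$f(H,D) := \ii(\widehat\Sigma \,||\, HH^\top + D)$, viewed as a map on the parameter set
$\mathcal{A} := \{(H,D) : H \in \openR^{n\times k},\ D \ge 0 \text{ diagonal},\ HH^\top + D > 0\}$.
The set $\mathcal{A}$ is not compact (it is unbounded, and its closure touches the singular locus where $HH^\top+D$ degenerates), so the strategy is to show that $f$ cannot be made small near the ``bad'' parts of $\mathcal{A}$, and then minimize over a genuinely compact piece. Concretely, I would fix any reference point, e.g.\ $H_0 = 0$ and $D_0 = \operatorname{diag}(\widehat\Sigma)$, which lies in $\mathcal{A}$ and yields a finite value $c_0 := f(H_0,D_0)$; it then suffices to minimize over the sublevel set $\mathcal{A}_{c_0} := \{(H,D)\in\mathcal{A} : f(H,D)\le c_0\}$.

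The key steps are as follows. First, \emph{continuity}: on the open set where $HH^\top+D>0$, the map $(H,D)\mapsto HH^\top+D$ is smooth and its image is positive definite, so $f$ is continuous there, being a composition of $\Sigma \mapsto \tfrac12\log(|\Sigma|/|\widehat\Sigma|) - \tfrac n2 + \tfrac12\tr(\Sigma^{-1}\widehat\Sigma)$ with that map. Second, \emph{coercivity / properness}: I must show that $\mathcal{A}_{c_0}$ is bounded and that $f$ blows up as one approaches the boundary where $\Sigma := HH^\top+D$ becomes singular or has an eigenvalue tending to $+\infty$. For this I would use the explicit formula~\eqref{eq:divsigma}: writing the eigenvalues of $\Sigma$ as $\lambda_1,\dots,\lambda_n$ and noting $\widehat\Sigma \ge \mu I$ for some $\mu>0$, one gets
\[
f(H,D) \ge \tfrac12\sum_{i=1}^n\Big(\log\lambda_i + \frac{\mu}{\lambda_i}\Big) - \tfrac12\log|\widehat\Sigma| - \tfrac n2,
\]
and since $t\mapsto \log t + \mu/t$ tends to $+\infty$ both as $t\to 0^+$ and as $t\to\infty$, a bound $f\le c_0$ forces each $\lambda_i$ into a compact subinterval of $(0,\infty)$, hence $\Sigma$ ranges over a compact set $\mathcal{K}$ of positive definite matrices bounded away from singularity. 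This immediately bounds $D$ (its diagonal entries are at most $\|\Sigma\|$ plus control on $HH^\top$) and bounds $\|H\|$ (since $\|HH^\top\| \le \|\Sigma\| + \|D\|$ once we control $D$, or more directly $HH^\top \le \Sigma$). Third, \emph{conclusion}: $\mathcal{A}_{c_0}$ is thus a bounded set, and its closure $\overline{\mathcal{A}_{c_0}}$ still consists of points with $HH^\top + D \in \mathcal{K} \subset \{\Sigma > 0\}$, hence $\overline{\mathcal{A}_{c_0}} \subseteq \mathcal{A}$; so $\overline{\mathcal{A}_{c_0}}$ is compact and contained in the domain of the continuous function $f$, which therefore attains its minimum there at some $(H^*, D^*)$. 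Because $c_0$ is itself attained inside, this minimum over $\overline{\mathcal{A}_{c_0}}$ is the global minimum over all of $\mathcal{A}$, and $D^* \ge 0$ is diagonal as required.

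The main obstacle is the coercivity step near the \emph{singular} boundary of the parameter space: one has to rule out that the infimum is approached along a sequence with $D$ staying bounded but $H$ degenerating, or with some diagonal entry of $D$ tending to $0$ while $HH^\top$ compensates, in such a way that $HH^\top + D$ stays nonsingular in the limit --- in that scenario $f$ need not blow up, and one cannot bound $H$ by the eigenvalue argument alone. The clean way around this is precisely the observation above that the eigenvalue bound on $\Sigma = HH^\top+D$ already does \emph{all} the work: $HH^\top \preceq \Sigma \preceq \Lambda I$ bounds $H$ directly (since $\|H\|^2 = \|HH^\top\|$), and $0 \le D \preceq \Sigma \preceq \Lambda I$ bounds $D$, with no need to separately analyze rank degenerations. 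The only genuinely delicate point that remains is verifying that the limit of a minimizing sequence still satisfies $H^*(H^*)^\top + D^* > 0$ rather than merely $\ge 0$ --- but this is exactly what the lower eigenvalue bound $\lambda_i \ge \lambda_{\min} > 0$ on $\mathcal{K}$ guarantees, so the limit point is automatically an interior feasible point and the argument closes. (The authors instead defer the proof to the Appendix and to results of Section~\ref{section:algo}, presumably obtaining the same conclusion via properties of the alternating-minimization iterates; the direct compactness argument sketched here is an independent route.)
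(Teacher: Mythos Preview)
Your argument is correct and essentially self-contained, whereas the paper takes a different route for the upper bound on the parameters. Specifically, the paper's proof (in Appendix~\ref{section:tech}) first runs \emph{one step} of Algorithm~\ref{algo1} from an arbitrary $(H_0,D_0)$ and invokes Proposition~\ref{prop:properties}(a) to obtain the a~priori bounds $H_1H_1^\top \le \widehat\Sigma$ and $D_1 \le \Delta(\widehat\Sigma)$; this immediately confines the search to a bounded set without any coercivity analysis at large eigenvalues. From there the paper argues, much as you do, that one may further restrict to $HH^\top + D \ge \eps I$ by the blow-up of $\log\lambda + c/\lambda$ as $\lambda\to 0^+$. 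Your approach instead extracts \emph{both} the upper and lower eigenvalue bounds directly from the inequality $f(H,D)\ge \tfrac12\sum_i(\log\lambda_i + \mu/\lambda_i) - \mbox{const}$, using that each summand is bounded below (by $1+\log\mu$) so that a bound on the sum forces each $\lambda_i$ into a compact subinterval of $(0,\infty)$; the bounds on $H$ and $D$ then follow from $HH^\top \preceq \Sigma$ and $D \preceq \Sigma$. What your approach buys is logical independence from Section~\ref{section:algo}: you never need the algorithm or its monotonicity properties, so existence is established prior to, and separately from, the alternating minimization machinery. What the paper's approach buys is a slightly sharper localization (the explicit boxes $HH^\top\le\widehat\Sigma$, $D\le\Delta(\widehat\Sigma)$) and a unified narrative in which the algorithm itself furnishes the compactness.
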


In a statistical setup, the approximation problem has an equivalent
formulation as an {\em estimation} problem. One then will have a
sequence of {\em idd} observations $Y_1,\ldots,Y_N$, each
distributed according to~(\ref{eq:y}). The matrices $H$ and $D$ are
the unknown parameters that have to be estimated, which can be done
applying the maximum likelihood (ML) method. For big enough $N$, the
sample covariance matrix will be positive definite a.s. under the
assumption that the covariance matrix of the $Y_i$ is positive
definite.  Denote the sample covariance matrix by
$\widehat{\Sigma}$. The computation of the ML estimators of $H$ and
$D$ is equivalent to solving the minimization
problem~\ref{problem2}. Indeed the normal log likelihood $\ell(H,D)$
with $H$ and $D$ as parameters yields
\[
\ell(H,D)=-\frac{N}{2}\log(2\pi)-\half\log |HH^\top +D|-\half
\mbox{tr}\Big((HH^\top + D)^{-1}\widehat{\Sigma}\Big).
\]
One immediately sees that $\ell(H,D)$ is, up to constants not
depending on $H$ and $D$, equal to $-\ii(\widehat{\Sigma}||HH^\top +
D)$. Hence, maximum likelihood estimation completely parallels
I-divergence minimization, only the interpretation is different.

The equations for the maximum likelihood estimators can be found in
e.g. Section 14.3.1 of~\cite{anderson}. In terms of the unknown
parameters $H$ and $D$, with $D$ assumed to be non-singular, they
are
\begin{align}
H & =(\widehat{\Sigma}-HH^\top)D^{-1}H \label{eq:mlh}\\
D & =\Delta(\widehat{\Sigma}-HH^\top).\label{eq:mld}
\end{align}
where $\Delta(M)$, defined for any square $M$, coincides with $M$ on the diagonal and is zero elsewhere. Note that the matrix $HH^\top +D$ obtained by maximum likelihood estimation, is automatically invertible.
Then it can be verified that equation~(\ref{eq:mlh}) is equivalent to
\begin{equation}
H  =\widehat{\Sigma}(HH^\top+D)^{-1}H, \label{eq:mlh2}
\end{equation}
which is also meaningful, when $D$ is not invertible.

The maximum likelihood equations~(\ref{eq:mlh}) and (\ref{eq:mld})
for the alternative parametrization, as induced by~(\ref{eq:modelL}),
take the form
\begin{align}
L & = (\widehat{\Sigma}-LPL^\top)D^{-1}L \label{eq:mll} \\
D & = \Delta(\widehat{\Sigma}-LPL^\top),
\end{align}
with (\ref{eq:mll}) equivalent to
\begin{equation}\label{eq:mll2}
L = \widehat{\Sigma}(LPL^\top+D)^{-1}L.
\end{equation}

It is clear that the system of equations~(\ref{eq:mlh}),
(\ref{eq:mld}) does not have an explicit solution.
For this reason numerical algorithms have been devised, among others
an adapted version of the EM algorithm, see~\cite{rubinthayer1982}. In
the present paper we consider an alternative approach and, in Section~\ref{section:algo},
we compare the ensuing algorithm with the EM.

In~\cite{finessospreij} we considered an approximate nonnegative
matrix factorization problem, where the objective function was also
of I-divergence type. In that case, a relaxation technique lifted
the original minimization to a double minimization in a higher
dimensional space and led naturally to an alternating minimization
algorithm. A similar approach, containing the core of the present
paper, will be followed below.

\section{Lifted version of the problem}\label{section:lift}
\setcounter{equation}{0}

In this section we recast Problem~\ref{problem2} in a higher
dimensional space, making it amenable to solution via two partial
minimizations. Later on this approach will lead to an alternating
minimization algorithm.

First we introduce two relevant classes of normal distributions. All
random vectors are supposed to be zero mean and normal, therefore
their laws are completely specified by covariance matrices. Consider
the set $\bs$ comprising all the $(n+k)$-dimensional covariance
matrices. An element $\Sigma \in \bs$ can always be decomposed as
\begin{equation}\label{eq:dec}
\Sigma=\begin{pmatrix} \Sigma_{11} & \Sigma_{12} \\ \Sigma_{21} &
\Sigma_{22}
\end{pmatrix},
\end{equation}
where $\Sigma_{11}$ and $\Sigma_{22}$ are square, of respective
sizes $n$ and $k$. Two subsets of $\bs$ will play a major role in
what follows. The subset $\bs_0$ of $\bs$, contains the covariances
that can be written as in (\ref{eq:dec}), with
$\Sigma_{11}=\widehat{\Sigma}$, a given matrix, i.e.
\begin{equation}\nonumber
\bs_0=\{\Sigma\in\bs: \Sigma_{11}=\widehat{\Sigma}\}.
\end{equation}
Elements of $\bs_0$ will often be denoted by $\Sigma_0$. Also of interest is the subset $\bs_1$ of $\bs$
whose elements are covariances for which the decomposition~(\ref{eq:dec}) takes the
special form
\begin{equation}\label{eq:dec1}
\Sigma=\begin{pmatrix} HH^\top + D & HQ \\
(HQ)^\top & Q^\top Q
\end{pmatrix},
\end{equation}
for certain matrices $H, D, Q$ with $D$ diagonal, i.e.
\begin{equation}\nonumber
\bs_1=\{\Sigma\in\bs: \exists H,D,Q :\Sigma_{11}=HH^\top + D,
\Sigma_{12}=HQ, \Sigma_{22}=Q^\top Q \}.
\end{equation}
Elements of $\bs_1$ will be often denoted by $\Sigma(H,D,Q)$ or by $\Sigma_1$.

\medskip
In the present section we study the lifted
\begin{problem}\label{liftedproblem}
\[
\min_{\Sigma_0\in\bs_0,\Sigma_1\in\bs_1}\ii(\Sigma_0||\Sigma_1)
\]
\end{problem} \noindent
viewing it as a double minimization over the variables $\Sigma_0$
and $\Sigma_1$. Problem~\ref{liftedproblem} and
Problem~\ref{problem2} are related by the following proposition,
whose proof is deferred to Appendix~\ref{section:tech}.
\begin{proposition}\label{prop:pqq}
Let $\widehat{\Sigma}$ be given. It holds that
\begin{equation*}
\min_{H,D}\, \ii(\widehat{\Sigma}\,||\,HH^\top +
D)=\min_{\Sigma_0\in\bs_0,\Sigma_1\in\bs_1}\ii(\Sigma_0||\Sigma_1).
\end{equation*}
\end{proposition}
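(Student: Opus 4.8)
The natural route is to perform the inner minimization over $\Sigma_0$ first and the minimization over $\Sigma_1$ afterwards, and the tool for the inner one is the chain rule for I-divergence along the partition of $\R^{n+k}$ into its first $n$ coordinates (the ``$Y$''-block) and its last $k$ coordinates (the ``$Z$''-block). Fix $\Sigma_1=\Sigma(H,D,Q)\in\bs_1$; by the standing assumption $HH^\top+D$ is invertible, so its $Y$-marginal is a nondegenerate normal, and for any $\Sigma_0\in\bs_0$,
\begin{equation*}
\ii(\Sigma_0\,||\,\Sigma_1)=\ii(\widehat\Sigma\,||\,HH^\top+D)+R(\Sigma_0,\Sigma_1),
\end{equation*}
where $R(\Sigma_0,\Sigma_1)\ge0$ is the expectation, over $Y\sim N(0,\widehat\Sigma)$, of the I-divergence between the conditional law of $Z$ given $Y$ under $\Sigma_0$ and that under $\Sigma_1$. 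For partitioned normals this identity also follows directly from \eqref{eq:divsigma} together with the Schur-complement formulas for conditional covariances; either way, the content is that the first coordinates contribute exactly $\ii(\widehat\Sigma\,||\,HH^\top+D)$ and the rest is a nonnegative remainder.

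From this, $\min_{\Sigma_0\in\bs_0}\ii(\Sigma_0\,||\,\Sigma_1)=\ii(\widehat\Sigma\,||\,HH^\top+D)$ for every such $\Sigma_1$. The inequality ``$\ge$'' is immediate since $R\ge0$. For ``$\le$'' (and to see that the minimum is attained) I would exhibit a $\Sigma_0^*\in\bs_0$ with $R(\Sigma_0^*,\Sigma_1)=0$, i.e.\ one whose conditional law of $Z$ given $Y$ coincides with that of $\Sigma_1$: this forces $(\Sigma_0^*)_{12}=\widehat\Sigma(HH^\top+D)^{-1}HQ$ and $(\Sigma_0^*)_{22}=C+(\Sigma_0^*)_{21}\widehat\Sigma^{-1}(\Sigma_0^*)_{12}$, with $C=Q^\top Q-(HQ)^\top(HH^\top+D)^{-1}HQ\ge0$ the conditional covariance of $Z$ given $Y$ under $\Sigma_1$. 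The only nontrivial point is that this prescribes a genuine covariance matrix, i.e.\ $\Sigma_0^*\ge0$; by the Schur-complement criterion applied to the block $\widehat\Sigma$ this amounts to $\widehat\Sigma>0$ (given) and $C\ge0$ (which holds because $\Sigma_1\ge0$).

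It remains to carry out the outer minimization, $\min_{\Sigma_1\in\bs_1}\ii(\widehat\Sigma\,||\,HH^\top+D)$. The value $\ii(\widehat\Sigma\,||\,HH^\top+D)$ does not depend on $Q$, and as $\Sigma(H,D,Q)$ ranges over $\bs_1$ (e.g.\ taking $Q=I_k$) the pair $(H,D)$ ranges over exactly the admissible pairs of Problem~\ref{problem2} — $D$ diagonal and $D\ge0$, the latter being forced on $\bs_1$ since $D$ is the Schur complement of the block $Q^\top Q$ in $\Sigma(H,D,Q)\ge0$. Hence the outer minimum equals $\min_{H,D}\ii(\widehat\Sigma\,||\,HH^\top+D)$, which is the left-hand side of the claimed identity; attainment on the left is Proposition~\ref{prop:exist}, and attainment on the right follows by combining an optimal $(H^*,D^*)$ with the corresponding $\Sigma_1^*=\Sigma(H^*,D^*,I_k)$ and the completion $\Sigma_0^*$ described above.

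The main obstacle is the explicit construction and the positive-semidefiniteness check for the conditioning-matching completion $\Sigma_0^*\in\bs_0$; this is the one genuinely computational step, and it goes through precisely because the off-diagonal block and the bottom-right block of an element of $\bs_0$ are free apart from overall positivity. All the rest is the chain rule plus the trivial remark that $R\ge0$.
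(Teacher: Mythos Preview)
Your proof is correct and follows essentially the same approach as the paper: both rely on the chain rule for I-divergence (the paper via Proposition~\ref{prop:pm1} and its identity~\eqref{eq:011}, you directly) to reduce the inner minimization over $\bs_0$ to the marginal I-divergence $\ii(\widehat\Sigma\,||\,HH^\top+D)$, then invoke Proposition~\ref{prop:exist} for attainment. Your explicit Schur-complement verification that the conditioning-matching $\Sigma_0^*$ is positive semidefinite, and your remark that $D\ge0$ is automatically forced on $\bs_1$, are both correct and slightly more self-contained than the paper's treatment.
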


\subsection{Partial minimization problems}\label{section:pms}
The first partial minimization, required for the solution of Problem~\ref{liftedproblem}, is as follows.
\begin{problem}\label{problem:pm1}
Given a strictly positive definite covariance matrix $\Sigma\in
\bs$, find
$$\min_{\Sigma_0 \in\bs_0} \, \ii(\Sigma_0||\Sigma).$$
\end{problem}
\noindent The unique solution
to this problem can be computed analytically.
\begin{proposition}\label{prop:pm1}
The unique minimizer $\Sigma^*$ of Problem~\ref{problem:pm1} is given by
\begin{equation}\nonumber
\Sigma^*=\begin{pmatrix} \widehat{\Sigma} &
\widehat{\Sigma}\Sigma_{11}^{-1}\Sigma_{12}
\\
\Sigma_{21}\Sigma_{11}^{-1}\widehat{\Sigma} &
\Sigma_{22}-\Sigma_{21}\Sigma_{11}^{-1}
(\Sigma_{11}-\widehat{\Sigma})\Sigma_{11}^{-1}\Sigma_{12}
\end{pmatrix} >0.
\end{equation}
Moreover
\begin{equation}\label{eq:011}
\ii(\Sigma^*||\Sigma)=\ii(\widehat{\Sigma}||\Sigma_{11}),
\end{equation}
and the Pythagorean rule
\begin{equation}\label{eq:pyth1}
\ii(\Sigma_0||\Sigma)=\ii(\Sigma_0||\Sigma^*)+\ii(\Sigma^*||\Sigma)
\end{equation}
holds for any strictly positive $\Sigma_0\in \bs_0$.
\end{proposition}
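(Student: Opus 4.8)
The plan is to split $\ii(\Sigma_0\,||\,\Sigma)$, for $\Sigma_0\in\bs_0$, into a term not depending on $\Sigma_0$ and two manifestly nonnegative terms, and then to identify the minimiser by forcing the latter to vanish. Write a generic element of $\bs_0$ as
\[
\Sigma_0=\begin{pmatrix}\widehat{\Sigma} & B\\ B^\top & C\end{pmatrix},\qquad B\in\R^{n\times k},\ C=C^\top\in\R^{k\times k};
\]
since $\widehat{\Sigma}>0$, positive definiteness of $\Sigma_0$ — equivalently, finiteness of $\ii(\Sigma_0\,||\,\Sigma)$ — amounts to $C-B^\top\widehat{\Sigma}^{-1}B>0$. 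Now apply the invertible linear change of coordinates $(x,y)\mapsto(x,\,y-\Sigma_{21}\Sigma_{11}^{-1}x)$, under which the I-divergence is invariant and which decorrelates the two blocks of $\Sigma$: it sends $\Sigma$ to the block-diagonal matrix with blocks $\Sigma_{11}$ and the Schur complement $S:=\Sigma_{22}-\Sigma_{21}\Sigma_{11}^{-1}\Sigma_{12}>0$, and sends $\Sigma_0$ to
\[
\begin{pmatrix}\widehat{\Sigma} & \widehat{\Sigma}M^\top\\ M\widehat{\Sigma} & M\widehat{\Sigma}M^\top+C-B^\top\widehat{\Sigma}^{-1}B\end{pmatrix},\qquad M:=B^\top\widehat{\Sigma}^{-1}-\Sigma_{21}\Sigma_{11}^{-1}.
\]
Feeding these two covariances into~\eqref{eq:divsigma}, using the block-determinant identity (the Schur complement of the first block of the transformed $\Sigma_0$ is again $C-B^\top\widehat{\Sigma}^{-1}B$) and collecting terms, one obtains
\[
\ii(\Sigma_0\,||\,\Sigma)=\ii(\widehat{\Sigma}\,||\,\Sigma_{11})+\ii\bigl(C-B^\top\widehat{\Sigma}^{-1}B\,||\,S\bigr)+\half\tr\bigl(M^\top S^{-1}M\,\widehat{\Sigma}\bigr).
\]

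In this identity the first summand is constant in $(B,C)$, while the other two are $\ge 0$: the middle one by the nonnegativity of I-divergence (itself immediate from~\eqref{eq:divsigma}), vanishing only if $C-B^\top\widehat{\Sigma}^{-1}B=S$; the last because $M^\top S^{-1}M\ge0$ and $\widehat{\Sigma}>0$, vanishing only if $M=0$. Hence $\min_{\Sigma_0\in\bs_0}\ii(\Sigma_0\,||\,\Sigma)=\ii(\widehat{\Sigma}\,||\,\Sigma_{11})$, attained precisely when $M=0$ and $C-B^\top\widehat{\Sigma}^{-1}B=S$. The first relation pins down $B^\top=\Sigma_{21}\Sigma_{11}^{-1}\widehat{\Sigma}$, i.e.\ $B=\widehat{\Sigma}\Sigma_{11}^{-1}\Sigma_{12}$, and then $C=S+B^\top\widehat{\Sigma}^{-1}B=\Sigma_{22}-\Sigma_{21}\Sigma_{11}^{-1}(\Sigma_{11}-\widehat{\Sigma})\Sigma_{11}^{-1}\Sigma_{12}$; these are uniquely determined and the resulting $\Sigma_0$ is exactly the claimed $\Sigma^*$. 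Positive definiteness $\Sigma^*>0$ is then automatic from $\widehat{\Sigma}>0$ together with the Schur complement of its first block equalling $S>0$. Taking $\Sigma_0=\Sigma^*$ in the displayed identity annihilates the last two terms, which is precisely~\eqref{eq:011}.

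For the Pythagorean rule I would run the same factorisation on $\ii(\Sigma_0\,||\,\Sigma^*)$ for an arbitrary strictly positive $\Sigma_0\in\bs_0$, using that $\Sigma^*$ shares with $\Sigma$ the ``conditional data'' entering the decomposition: $(\Sigma^*)_{21}(\Sigma^*)_{11}^{-1}=\Sigma_{21}\Sigma_{11}^{-1}$ and $(\Sigma^*)_{22}-(\Sigma^*)_{21}(\Sigma^*)_{11}^{-1}(\Sigma^*)_{12}=S$, while its first block is $\widehat{\Sigma}$, as for every element of $\bs_0$. Consequently the decomposition of $\ii(\Sigma_0\,||\,\Sigma^*)$ has marginal term $\ii(\widehat{\Sigma}\,||\,\widehat{\Sigma})=0$ and exactly the same two conditional terms $\ii(C-B^\top\widehat{\Sigma}^{-1}B\,||\,S)+\half\tr(M^\top S^{-1}M\,\widehat{\Sigma})$ that occur in the decomposition of $\ii(\Sigma_0\,||\,\Sigma)$. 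Subtracting the two identities and invoking~\eqref{eq:011} gives $\ii(\Sigma_0\,||\,\Sigma)=\ii(\Sigma_0\,||\,\Sigma^*)+\ii(\Sigma^*\,||\,\Sigma)$.

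The step that genuinely needs care is the decomposition identity itself: it rests on the block-determinant factorisation and on checking that $\tr(\Sigma^{-1}\Sigma_0)$ in~\eqref{eq:divsigma} regroups precisely into $\tr(\Sigma_{11}^{-1}\widehat{\Sigma})$, $\tr\bigl(S^{-1}(C-B^\top\widehat{\Sigma}^{-1}B)\bigr)$ and the quadratic remainder $\tr(M^\top S^{-1}M\,\widehat{\Sigma})$ — purely matrix bookkeeping once the block-inversion formulas collected in the Appendix are at hand. A more computational route, avoiding the change of coordinates, substitutes the block parametrisation straight into~\eqref{eq:divsigma}, minimises first over $C$ (obtaining $C-B^\top\widehat{\Sigma}^{-1}B=S$) and then over $B$, and uses strict convexity of $\Sigma_0\mapsto\ii(\Sigma_0\,||\,\Sigma)$ on the positive definite cone to upgrade the unique stationary point to the global minimiser.
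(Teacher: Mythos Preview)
Your proof is correct. The decomposition
\[
\ii(\Sigma_0\,||\,\Sigma)=\ii(\widehat{\Sigma}\,||\,\Sigma_{11})+\ii\bigl(C-B^\top\widehat{\Sigma}^{-1}B\,\big\|\,S\bigr)+\tfrac12\tr\bigl(M^\top S^{-1}M\,\widehat{\Sigma}\bigr)
\]
checks out (the transformed $\Sigma_0$ has the claimed Schur complement $C-B^\top\widehat{\Sigma}^{-1}B$, and the trace and log-determinant regroup exactly as you say), and the identification of the minimiser, uniqueness, positive definiteness, and the Pythagorean rule all follow cleanly.

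The paper takes a different route: it invokes a general measure-theoretic result (Proposition~\ref{prop:gpm1} in Appendix~\ref{section:decdiv}) stating that, for \emph{arbitrary} laws $\qq_{XY}$ and a prescribed marginal $\pp^0_Y$, the minimiser of $\ii(\pp_{XY}\,||\,\qq_{XY})$ over $\{\pp_{XY}:\pp_Y=\pp^0_Y\}$ is given by $\dd\pp^*_{XY}/\dd\qq_{XY}=\dd\pp^0_Y/\dd\qq_Y$, together with the abstract Pythagorean rule; it then specialises to Gaussians and computes the blocks of $\Sigma^*$ via conditional expectations under $\pp^*$. Your argument is the Gaussian-specific incarnation of the same chain-rule idea (Lemma~\ref{lemma:xcondy}): your three summands are precisely the marginal divergence $\ii(\pp_Y\,||\,\qq_Y)$ and the two pieces of the expected conditional divergence $\mathbb{E}_{\pp_Y}\ii(\pp_{X|Y}\,||\,\qq_{X|Y})$, split according to~\eqref{eq:divmu} into a conditional-covariance term and a conditional-mean term. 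What you gain is self-containment and a direct, explicit minimisation with no appeal to abstract Radon--Nikodym machinery; what the paper gains is a single general lemma that also drives the proof of Proposition~\ref{prop:pm2} and organises the whole Appendix~\ref{section:decdiv}.
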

\begin{proof} See Appendix~\ref{section:tech}.\end{proof}
\begin{remark}\label{remark:inv}
Using the decomposition of Lemma~\ref{lemma:matrix}, one can easily
compute the inverse of the matrix $\Sigma^*$ of
Proposition~\ref{prop:pm1} and verify that $(\Sigma^*)^{-1}$ differs
from $\Sigma^{-1}$ only in the upper left block. Moreover, in terms of  $L^2$-norms (the $L^2$-norm of a matrix $M$ is $||M||=({\rm tr}(M^\top M))^{1/2}$) we have for the approximation of the inverse the identity $||\Sigma^{-1}-(\Sigma^*)^{-1}||=||\Sigma_{11}^{-1}-\widehat{\Sigma}^{-1}||$.
%\[
%(\Sigma^*)^{-1}-\Sigma^{-1}=\begin{pmatrix}\widehat{\Sigma}^{-1}-\Sigma_{11}^{-1}
%& 0 \\ 0 & 0 \end{pmatrix}
%\]
\end{remark}
Next we turn to the second partial minimization
\begin{problem}\label{problem:pm2}
Given a strictly positive definite covariance matrix $\Sigma\in
\bs$, find
$$\min_{\Sigma_1 \in\bs_1} \, \ii(\Sigma||\Sigma_1).$$
\end{problem}
A solution to this problem is given explicitly in the proposition
below. To state the result we introduce the following notation: for
any nonnegative definite $P$ denote by $P^{1/2}$ any matrix
satisfying $P^{{1/2}^\top} P^{1/2}=P$, and by $P^{-1/2}$ its
inverse, if it exists. Furthermore we put $\tilde{\Sigma}_{11}=\Sigma_{11}-\Sigma_{12}\Sigma_{22}^{-1}\Sigma_{21}$.
\begin{proposition}\label{prop:pm2}
A minimizer $\Sigma(H^*,D^*,Q^*)$ of Problem~\ref{problem:pm2} is given by
\begin{align*}
Q^* & =\Sigma_{22}^{1/2}\\
H^* & = \Sigma_{12}\Sigma_{22}^{-1/2} \\
D^* & =
\Delta(\Sigma_{11}-\Sigma_{12}\Sigma_{22}^{-1}\Sigma_{21}),
\end{align*}
corresponding to the minimizing matrix
\[
\Sigma^*=\Sigma(H^*,D^*,Q^*)=\begin{pmatrix}
\Sigma_{12}\Sigma_{22}^{-1}\Sigma_{21}+\Delta(\Sigma_{11}-\Sigma_{12}\Sigma_{22}^{-1}\Sigma_{21})
& \Sigma_{12} \\
\Sigma_{21} & \Sigma_{22}
\end{pmatrix}.
\]
Moreover, $\ii(\Sigma||\Sigma^*)=\ii(\tilde{\Sigma}_{11}||\Delta(\tilde{\Sigma}_{11}))$ and the Pythagorean rule
\begin{equation}\label{eq:p2}
\ii(\Sigma||\Sigma_1)=\ii(\Sigma||\Sigma^*)+\ii(\Sigma^*||\Sigma_1)
\end{equation}
holds for any $\Sigma_1=\Sigma(H,D,Q)\in\bs_1$.
\end{proposition}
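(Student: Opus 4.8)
The plan is to exploit the structure of $\bs_1$ and the block form of the I-divergence between Gaussians. First I would parametrize: fix $\Sigma_1=\Sigma(H,D,Q)\in\bs_1$ as in~(\ref{eq:dec1}) and write out $\ii(\Sigma||\Sigma_1)$ using~(\ref{eq:divsigma}), i.e. $\ii(\Sigma||\Sigma_1)=\half\log\frac{|\Sigma_1|}{|\Sigma|}-\frac{n+k}{2}+\half\tr(\Sigma_1^{-1}\Sigma)$. The key algebraic observation is that $\Sigma_1$ is the covariance of the model~(\ref{eq:model2}), so by the Schur complement (Lemma~\ref{lemma:matrix} in the Appendix) both $|\Sigma_1|$ and $\Sigma_1^{-1}$ decompose cleanly in terms of the conditional block $D=\Delta(\cdots)$ and the marginal block $Q^\top Q$. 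Specifically, $|\Sigma_1|=|D|\,|Q^\top Q|$ and $\Sigma_1^{-1}$ has the standard $2\times 2$ block-inverse form with $D^{-1}$ appearing in the $(1,1)$ corner. Substituting this into the trace term and grouping, the objective should split as a sum of a term depending only on $(H,Q)$ through the off-diagonal/marginal fit and a term depending on $D$ through $\Delta$.

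Next I would do the minimization in two stages. For the marginal and cross blocks: the quantities $\Sigma_{22}$ and $\Sigma_{12}$ enter $\ii(\Sigma||\Sigma_1)$ in a way that, after the Schur reduction, forces the minimizing choice $Q^\top Q=\Sigma_{22}$ and $HQ=\Sigma_{12}$, hence $H=\Sigma_{12}\Sigma_{22}^{-1}Q^{-1}$; any $Q^*=\Sigma_{22}^{1/2}$ then gives $H^*=\Sigma_{12}\Sigma_{22}^{-1/2}$. This is exactly the statement that matching the marginal of $Z$ and the cross-covariance is optimal, and it is essentially a one-variable convex calculation in the eigenvalues once one isolates the relevant $\log\det$ plus $\tr$ combination. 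For $D$: once $H,Q$ are fixed optimally, the remaining dependence of $\ii(\Sigma||\Sigma_1)$ on $D$ is through $\half\log|D|+\half\tr(D^{-1}\tilde\Sigma_{11})$ where $\tilde\Sigma_{11}=\Sigma_{11}-\Sigma_{12}\Sigma_{22}^{-1}\Sigma_{21}$ is the Schur complement, because the $(1,1)$ block of $\Sigma_1^{-1}\Sigma$ reduces, after cancellations with the cross terms, to $D^{-1}\tilde\Sigma_{11}$. Since $D$ is diagonal, minimizing $\sum_i(\log D_{ii}+(\tilde\Sigma_{11})_{ii}/D_{ii})$ gives $D^*_{ii}=(\tilde\Sigma_{11})_{ii}$, i.e. $D^*=\Delta(\tilde\Sigma_{11})$, which matches the claimed formula. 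Assembling the three pieces yields the stated $\Sigma^*=\Sigma(H^*,D^*,Q^*)$, and plugging back the optimal values gives the reduced divergence $\ii(\Sigma||\Sigma^*)=\ii(\tilde\Sigma_{11}||\Delta(\tilde\Sigma_{11}))$ — only the diagonal-approximation residual of the Schur complement survives.

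Finally, for the Pythagorean identity~(\ref{eq:p2}) I would proceed by direct computation rather than by an abstract projection argument: expand $\ii(\Sigma||\Sigma^*)+\ii(\Sigma^*||\Sigma_1)$ using~(\ref{eq:divsigma}) for general $\Sigma_1=\Sigma(H,D,Q)\in\bs_1$. The $\log\det$ terms telescope as $\log\frac{|\Sigma^*|}{|\Sigma|}+\log\frac{|\Sigma_1|}{|\Sigma^*|}=\log\frac{|\Sigma_1|}{|\Sigma|}$ automatically; the content is in showing $\tr(\Sigma_1^{-1}\Sigma)=\tr((\Sigma^*)^{-1}\Sigma)+\tr(\Sigma_1^{-1}\Sigma^*)-(n+k)$. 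Here one uses that $\Sigma^*$ and $\Sigma_1$ share the same off-diagonal block $\Sigma_{12}$ and marginal block $\Sigma_{22}$ (both lie in $\bs_1$ with the same $(H$-direction$,Q)$ data up to the optimal fit), so $\Sigma_1^{-1}(\Sigma^*-\Sigma)$ has a block structure that makes the cross-terms vanish and leaves only the diagonal discrepancy, which is what powers the identity. I expect this trace cancellation to be the main obstacle — it requires carefully tracking which blocks of $(\Sigma^*)^{-1}-\Sigma_1^{-1}$ are nonzero and invoking $\Delta(\tilde\Sigma_{11})$ being diagonal so that $\tr$ against the off-diagonal part of $\Sigma-\Sigma^*$ drops out — but it is the same mechanism already used for~(\ref{eq:pyth1}) in Proposition~\ref{prop:pm1}, so the Appendix computation there should transfer with the roles of the blocks interchanged.
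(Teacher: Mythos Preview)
Your approach is genuinely different from the paper's. The paper does not compute with~(\ref{eq:divsigma}) directly; instead it invokes the abstract information-theoretic machinery of Appendix~\ref{section:decdiv}. The key observation there (Lemma~\ref{lemma:ci}) is that $\bs_1$ is exactly the set of Gaussian covariances for which the components of $Y$ are conditionally independent given $X$, so Problem~\ref{problem:pm2} is an instance of Proposition~\ref{prop:div2min}: minimize $\ii(\pp||\qq)$ over all $\qq$ with a prescribed conditional-independence structure. That proposition gives both the minimizer $\qq^*=\widetilde{\pp}$ (match the marginal of $X$ and each conditional $Y_i\mid X$) and the Pythagorean rule in one stroke, for arbitrary distributions; the paper then just reads off what $\widetilde{\pp}$ looks like for Gaussians. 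Your direct matrix route is more elementary and self-contained; the paper's route explains \emph{why} a Pythagorean rule should exist at all (information projection onto a log-linear family) and would generalize beyond Gaussians.

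Your minimization argument is essentially correct, though the claim that the objective ``splits'' into a $(H,Q)$-part and a $D$-part is not literally true: the cross term $\half\tr\big(D^{-1}K\Sigma_{22}K^\top\big)$ with $K=\Sigma_{12}\Sigma_{22}^{-1}-HQ(Q^\top Q)^{-1}$ couples them. What saves you is that for \emph{every} $D$ this term and $\ii(\Sigma_{22}\|Q^\top Q)$ can be driven to zero by the stated choice of $H,Q$, so the residual is $\min_D\ii(\tilde\Sigma_{11}\|D)=\ii(\tilde\Sigma_{11}\|\Delta(\tilde\Sigma_{11}))$.

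There is, however, a genuine error in your Pythagorean argument. You assert that $\Sigma^*$ and a generic $\Sigma_1=\Sigma(H,D,Q)\in\bs_1$ ``share the same off-diagonal block $\Sigma_{12}$ and marginal block $\Sigma_{22}$''. They do not: $\Sigma_1$ has off-diagonal block $HQ$ and lower-right block $Q^\top Q$, which are arbitrary. The correct (and simpler) mechanism is this: for \emph{any} $\Sigma_1\in\bs_1$ the $(1,1)$ block of $\Sigma_1^{-1}$ equals $D^{-1}$, hence is diagonal (this is the Schur-complement fact you already used for $|\Sigma_1|=|D|\,|Q^\top Q|$). Since $\Sigma-\Sigma^*$ vanishes outside the $(1,1)$ block and there equals $\tilde\Sigma_{11}-\Delta(\tilde\Sigma_{11})$, which has zero diagonal, one gets $\tr\big(\Sigma_1^{-1}(\Sigma-\Sigma^*)\big)=0$ for every $\Sigma_1\in\bs_1$, in particular also for $\Sigma^*$. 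The trace identity you need, $\tr(\Sigma_1^{-1}\Sigma)=\tr((\Sigma^*)^{-1}\Sigma)+\tr(\Sigma_1^{-1}\Sigma^*)-(n+k)$, follows immediately. So your direct computation does go through, but via the diagonality of the $(1,1)$ block of $\Sigma_1^{-1}$, not via any shared blocks of $\Sigma_1$ and $\Sigma^*$.
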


\begin{proof} See Appendix~\ref{section:tech}.
\end{proof}

Note that this problem cannot have a unique
solution in terms of the matrices $H$ and $Q$. Indeed, if $U$ is a
unitary $k\times k$ matrix and $H'=HU$, $Q'=U^\top Q$, then
$H'H'^\top=HH^\top$, $Q'^\top Q'=Q^\top Q$ and $H'Q'=HQ$.
Nevertheless, the optimal matrices $HH^\top$, $HQ$ and $Q^\top Q$
are unique, as it can be easily checked using the expressions in Proposition~\ref{prop:pm2}.
\begin{remark}\label{remark:hhd}
Note that, since $\Sigma$ is supposed to be strictly positive, $\Sigma_{11}-\Sigma_{12}\Sigma_{22}^{-1}\Sigma_{21}>0$.
It follows that $D^*= \Delta(\Sigma_{11}-\Sigma_{12}\Sigma_{22}^{-1}\Sigma_{21})$ is strictly positive.
\end{remark}

\begin{remark}\label{remark:l2d}
The matrix $\Sigma^*$ in Proposition~\ref{prop:pm2} differs from
$\Sigma$ only in the upper left block and in terms of $L^2$-norms we have the identity $||\Sigma-\Sigma^*||=||\tilde{\Sigma}_{11}-\Delta(\tilde{\Sigma}_{11})||$,
%\[
%\Sigma^*-\Sigma=
%\begin{pmatrix}
%\Delta(\Sigma_{11}-\Sigma_{12}\Sigma_{22}^{-1}\Sigma_{21})-(\Sigma_{11}-\Sigma_{12}\Sigma_{22}^{-1}\Sigma_{21})
%& 0 \\ 0 & 0
%\end{pmatrix}.
%\]
compare with Remark~\ref{remark:inv}.
\end{remark}

We close this section by considering a constrained version of the second
partial minimization Problem~\ref{problem:pm2}. The constraint that we impose is $Q=Q_0$, where
$Q_0$ is fixed or, slightly more general, with $P_0:=Q_0^\top Q_0$ fixed. The matrices $H$ and $D$ remain free. For clarity we state this as
\begin{problem}\label{problem:pm2c}
Given strictly positive covariances $\Sigma \in \bs$ and $P_0 \in \openR^{k \times k}$, and letting
$Q_0$ be any matrix satisfying $P_0 = Q_0^\top Q_0$, find
$$\min_{\Sigma(H, D, Q_0) \in\bs_1} \, \ii(\Sigma||\Sigma_1).$$
\end{problem}
The solution is given in the next proposition.
\begin{proposition}\label{prop:sigma0}
A solution $\Sigma_{0}^*$ of Problem~\ref{problem:pm2c} is given by
\[
\Sigma_{0}^*=\begin{pmatrix}
\Sigma_{12}\Sigma_{22}^{-1}P_0\Sigma_{22}^{-1}\Sigma_{21} +
\Delta(\Sigma_{11}-\Sigma_{12}\Sigma_{22}^{-1}\Sigma_{21}) &
\Sigma_{12}\Sigma_{22}^{-1}P_0 \\
P_0\Sigma_{22}^{-1}\Sigma_{21} & P_0
\end{pmatrix},
\]
for which $H^*=\Sigma_{12}\Sigma_{22}^{-1}Q_0^\top$ and $D^*$ is
as in Proposition~\ref{prop:pm2}.
\end{proposition}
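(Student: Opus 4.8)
The plan is to exploit the block structure of $\Sigma_1=\Sigma(H,D,Q_0)$ to reduce the minimization to a transparent algebraic problem. Since $P_0>0$ the $k\times k$ matrix $Q_0$ is invertible, so I would first reparametrize, replacing the free variable $H\in\R^{n\times k}$ by $W:=HQ_0^{-\top}$, which is a bijection between $H$ and $W$. In these variables $\Sigma_1$ has $(1,1)$-block $WP_0W^\top+D$, off-diagonal block $WP_0$, and $(2,2)$-block $P_0$, so the Schur complement of the fixed invertible block $P_0$ is exactly $D$. Consequently $|\Sigma_1|=|P_0|\,|D|$ — in particular $D>0$ is forced by finiteness of the I-divergence — and block inversion yields
\[
\Sigma_1^{-1}=\begin{pmatrix} D^{-1} & -D^{-1}W\\ -W^\top D^{-1} & P_0^{-1}+W^\top D^{-1}W\end{pmatrix}.
\]

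Next I would insert this, together with $\Sigma$ written in block form~(\ref{eq:dec}), into the explicit I-divergence formula~(\ref{eq:divsigma}). Carrying out the block products in $\tr(\Sigma_1^{-1}\Sigma)$ and discarding all terms not involving $H$ or $D$, one obtains, with $M(W):=\Sigma_{11}-\Sigma_{12}W^\top-W\Sigma_{21}+W\Sigma_{22}W^\top$,
\[
\ii(\Sigma\,||\,\Sigma_1)=c+\tfrac12\log|D|+\tfrac12\tr\!\big(D^{-1}M(W)\big),
\]
where $c$ depends only on $\Sigma$ and $P_0$. Completing the square gives $M(W)=\tilde\Sigma_{11}+(W-\Sigma_{12}\Sigma_{22}^{-1})\Sigma_{22}(W-\Sigma_{12}\Sigma_{22}^{-1})^\top$ with $\tilde\Sigma_{11}=\Sigma_{11}-\Sigma_{12}\Sigma_{22}^{-1}\Sigma_{21}$, and since $D^{-1}>0$ and $\Sigma_{22}>0$ the correction term adds a nonnegative quantity to $\tr(D^{-1}M(W))$, vanishing precisely at $W^*=\Sigma_{12}\Sigma_{22}^{-1}$, independently of $D$. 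Hence $H^*=W^*Q_0^\top=\Sigma_{12}\Sigma_{22}^{-1}Q_0^\top$.

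Substituting $W=W^*$, the remaining problem is to minimize $\tfrac12\log|D|+\tfrac12\tr(D^{-1}\tilde\Sigma_{11})$ over diagonal $D>0$, i.e. to minimize $\ii(\tilde\Sigma_{11}\,||\,D)$ up to a constant. This decouples coordinatewise into minimizing $t\mapsto\log t+(\tilde\Sigma_{11})_{ii}/t$ on $(0,\infty)$, whose unique minimizer is $t=(\tilde\Sigma_{11})_{ii}$; this is the same step used in the proof of Proposition~\ref{prop:pm2}, and since $\tilde\Sigma_{11}>0$ (because $\Sigma>0$) we get $D^*=\Delta(\tilde\Sigma_{11})>0$, as asserted. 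Finally I would reassemble $\Sigma_0^*=\Sigma(H^*,D^*,Q_0)$: the $(2,2)$-block is $Q_0^\top Q_0=P_0$, the off-diagonal block is $H^*Q_0=\Sigma_{12}\Sigma_{22}^{-1}P_0$, and the $(1,1)$-block is $H^*H^{*\top}+D^*=\Sigma_{12}\Sigma_{22}^{-1}P_0\Sigma_{22}^{-1}\Sigma_{21}+\Delta(\Sigma_{11}-\Sigma_{12}\Sigma_{22}^{-1}\Sigma_{21})$, which is exactly the claimed $\Sigma_0^*$.

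There is no serious obstacle here; the argument is pure block algebra. The one step that must be handled with care is the reparametrization: it is precisely the invertibility of $Q_0$ — guaranteed by $P_0>0$ — that makes the Schur complement of $P_0$ equal to $D$ on the nose and lets the objective split cleanly into a $W$-part and a $D$-part, rather than leaving behind the oblique term $HQ_0(Q_0^\top Q_0)^{-1}Q_0^\top H^\top$. It is also worth noting that, although $\Sigma_0^*$ is itself uniquely determined, $H^*$ still depends on which square root $Q_0$ of $P_0$ is chosen, which is why the statement only claims \emph{a} solution.
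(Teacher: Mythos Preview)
Your proof is correct and complete. The reparametrization $W=HQ_0^{-\top}$ is the key simplifying move: it makes the Schur complement of the fixed block $P_0$ collapse to $D$, after which the objective separates and the rest is a clean completion of the square.

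The paper, however, takes a different route. Rather than computing $\Sigma_1^{-1}$ block by block and manipulating the trace, it invokes the probabilistic I-divergence decomposition of Lemma~\ref{lemma:xyi} and Proposition~\ref{prop:div2min}: under any $\qq\in\bs_1$ the components $Y_i$ are conditionally independent given $X$, so the optimal $\qq^*$ must have $\qq^*_{Y_i|X}=\pp_{Y_i|X}$; the constraint $Q=Q_0$ then simply pins the $X$-marginal to $N(0,P_0)$ instead of letting it equal $\pp_X$. The blocks of $\Sigma_0^*$ are read off from $\mathbb{E}_{\qq^*}[Y|X]=\Sigma_{12}\Sigma_{22}^{-1}X$ and $\cov_{\qq^*}(Y|X)=\Delta(\tilde\Sigma_{11})$, exactly as in Proposition~\ref{prop:pm2} but with $\cov_{\qq^*}(X)=P_0$ in place of $\Sigma_{22}$. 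Your argument is more elementary and entirely self-contained---it never needs the conditional-independence lemmas---while the paper's argument is more conceptual and keeps the constrained problem in strict parallel with the unconstrained one, which also makes transparent why the Pythagorean rule fails here (the $\ii(\pp_X||\qq_X)$ term in Lemma~\ref{lemma:xyi} can no longer be driven to zero).
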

\begin{proof}
See Appendix~\ref{section:tech}.
\end{proof}
Note that for the constrained problem no Pythagorean rule holds.
However~(\ref{eq:p2}) can be used to compare the optimal
I-divergences of Problem~\ref{problem:pm2} and
Problem~\ref{problem:pm2c}. Since $\Sigma_0^* \in \bs_1$,
applying~(\ref{eq:p2}) one gets
\[
\ii(\Sigma||\Sigma_{0}^*)=
\ii(\Sigma||\Sigma^*)+\ii(\Sigma^*||\Sigma_{0}^*),
\]
hence $\ii(\Sigma||\Sigma_{0}^*)\geq \ii(\Sigma||\Sigma^*)$,
where $\Sigma^*$ is as in Proposition~\ref{prop:pm2}. The quantity $\ii(\Sigma^*||\Sigma_{0}^*)$
is the extra cost incurred solving Problem~\ref{problem:pm2c} instead of Problem~\ref{problem:pm2}.
An elementary computation gives
$$\ii(\Sigma^*||\Sigma_{0}^*)=\ii(\Sigma_{22}||P_0).$$
In fact this is an easy consequence of the relation, similar to Remark~\ref{remark:inv},
\[
(\Sigma_{0}^*)^{-1}-(\Sigma^*)^{-1}=
\begin{pmatrix}
0 & 0 \\
0 & P_0^{-1}-\Sigma_{22}^{-1}
\end{pmatrix}.
\]
We see that the two optimizing matrices in the constrained case (Proposition~\ref{prop:sigma0})
and unconstrained case (Proposition~\ref{prop:pm2})
coincide iff the constraining matrix $P_0$ satisfies $P_0=\Sigma_{22}$.

\section{Alternating minimization algorithm}\label{section:algo}
\setcounter{equation}{0}

In this section, the core of the paper, the two partial
minimizations of Section~\ref{section:lift} are combined into an
alternating minimization algorithm for the solution of
Problem~\ref{problem2}. A number of equivalent formulations of the
updating equations will be presented and their properties discussed.

\subsection{The algorithm} \label{section:subalgo}

We suppose that the given covariance matrix $\widehat{\Sigma}$ is
strictly positive definite. To setup the iterative minimization
algorithm, assign initial values $H_0, D_0, Q_0$ to the
parameters, with $D_0$ diagonal, $Q_0$ invertible and $H_0H_0^\top
+D_0$ invertible. The updating rules are constructed as follows.
Let $H_t, D_t, Q_t$ be the parameters at the $t$-th iteration, and
$\Sigma_{1,t} = \Sigma(H_t,D_t,Q_t)$ the corresponding covariance,
defined as in~(\ref{eq:dec1}). Now solve the two partial
minimizations as illustrated below.

\begin{equation*}
 (H_t, D_t, Q_t) \,\,\xrightarrow[\underset{\Sigma_0 \in
{\mathbf \Sigma_0}}\min \ii(\Sigma_0 || \Sigma_{1,t})]{{\rm
Prop.~\ref{prop:pm1}}} \,\, \Sigma_{0,t} \,\,
\xrightarrow[\underset{\Sigma_1 \in {\mathbf \Sigma_1}}\min
\ii(\Sigma_{0,t} || \Sigma_1)]{{\rm Prop.~\ref{prop:pm2}}} \,\,
(H_{t+1}, D_{t+1}, Q_{t+1}) \, \cdots,
\end{equation*}
where $\Sigma_{0,t}$ denotes the solution of the first minimization with input $\Sigma_{1,t}$.

\noindent To express in a compact form the resulting update
equations, define
\begin{equation}\label{eq:r}
R_t=I- H_t^\top (H_tH_t^\top + D_t)^{-1}H_t + H_t^\top (H_tH_t^\top
+ D_t)^{-1} \widehat{\Sigma} (H_tH_t^\top + D_t)^{-1}H_t.
\end{equation}
Note that, by Remark~\ref{remark:hhd}, $H_tH^\top_t+D_t$ is
actually invertible for all $t$, since both $H_0H^\top_0+D_0$ and
$Q_0$ have been chosen to be invertible. It follows, by
Corollary~\ref{cor:ihht}, that also $I- H_t^\top (H_tH_t^\top +
D_t)^{-1}H_t$, and consequently $R_t$, are strictly positive and
therefore invertible. The update equations resulting from the
cascade of the two minimizations are
\begin{align}
Q_{t+1} & = \Big(Q_t^\top R_t Q_t\Big)^{1/2}, \label{eq:qn} \\
H_{t+1} & = \widehat{\Sigma}(H_tH_t^\top + D_t)^{-1}H_tQ_tQ_{t+1}^{-1}, \label{eq:h1}\\
D_{t+1} & =
\Delta(\widehat{\Sigma}-H_{t+1}H_{t+1}^\top).\label{eq:dn}
\end{align}
Properly choosing the square root in Equation~(\ref{eq:qn}) makes $Q_t$ disappear from the update equations.
This is an attractive feature since only $H_t$ and $D_t$ are needed to construct the approximate FA model
$H_t H_t^\top + D_t$ at the $t$-th step of the algorithm. Observe that
$(Q_t^\top R_t Q_t)^{1/2} = R_t^{1/2} Q_t$, where $R_t^{1/2}$ is a symmetric root of $R_t$, is a possible
root for the right hand side of Equation~(\ref{eq:qn}). Inserting the resulting matrix $Q_{t+1} = R_t^{1/2} Q_t$
into Equation~(\ref{eq:h1}) results in
\begin{algorithm}\label{algo1}
Given  $H_t$, $D_t$ from the $t$-th step, and $R_t$ as in
(\ref{eq:r}), the update equations for a I-divergence minimizing
algorithm are
\begin{align}
H_{t+1}  & = \widehat{\Sigma}(H_tH_t^\top + D_t)^{-1}H_tR_t^{-1/2} \label{eq:h}\\
D_{t+1} & = \Delta(\widehat{\Sigma}-H_{t+1}H_{t+1}^\top).
\label{eq:d}
\end{align}
\end{algorithm}
\noindent Since $R_t$ only depends on $H_t$ and $D_t$, see~(\ref{eq:r}), the parameter
$Q_t$ has been effectively eliminated.

\subsection{Alternative algorithms}
Algorithm~\ref{algo1} has two drawbacks making its implementation
computationally awkward. To update $H_t$ via equation~(\ref{eq:h})
one has to compute, at each step, the square root of the $k \times
k$ matrix $R_t$ and the inverse of the $n \times n$ matrix
$H_tH_t^\top + D_t$. Taking a slightly different approach it is
possible to reorganize the algorithm in order to avoid the
computation of square roots at each step, and to reduce to $k \times
k$ the size of the matrices that need to be inverted.

\medskip \noindent To avoid the computation of square roots at each step there are at least two possible
variants of Algorithm~\ref{algo1}, both involving a reparametrization. The first approach is to use the
alternative pa\-ra\-me\-tri\-za\-tion~(\ref{eq:modelL}) and to write update equations for the parameters $L,D,P$.
Translated in terms of the matrices $L_t:=H_tQ_t^{-\top}$ and $P_t=Q^\top_tQ_t$, Algorithm~\ref{algo1} becomes
\begin{algorithm}\label{algo2}
Given  $L_t$, $P_t$, and $D_t$ from the $t$-th step, the update
equations for a I-divergence minimizing algorithm are
\begin{align}
L_{t+1}  & = \widehat{\Sigma}(L_t P_t L_t^\top + D_t)^{-1}L_t P_t P_{t+1}^{-1}, \label{eq:l}\\
P_{t+1} & = P_t \! - \! P_tL_t^\top (L_tP_tL_t^\top \! + \!
D_t)^{-1} (L_tP_tL_t^\top \! + \! D_t \! - \! \widehat{\Sigma})(L_t P_t L_t^\top \! + \! D_t)^{-1}L_t P_t, \nonumber\\
D_{t+1} & = \Delta(\widehat{\Sigma}-L_{t+1}P_{t+1}L_{t+1}^\top).
\nonumber
\end{align}
\end{algorithm}

\noindent One can run Algorithm~\ref{algo2} for any number $T$ of steps, and then switch back to the $H, D$
parametrization computing $H_T = L_T Q_T^\top$, which requires only the square root at iteration $T$, \emph{i.e.} $P_T =Q_T^\top Q_T$

\medskip\noindent An alternative approach to avoid the square roots at each iteration of Algorithm~\ref{algo1} is to run it for $\ch_t:=H_tH_t^\top$.
\begin{proposition}\label{prop:ch}
Let $H_t$ be as in Algorithm~\ref{algo1}. Pick $\ch_0=H_0H_0^\top$, and $D_0$ such that $\ch_0+D_0$ is invertible.
The update equation for $\ch_{t}$ becomes
\begin{equation}
\ch_{t+1} =
\widehat{\Sigma}(\ch_{t}+D_{t})^{-1}\ch_{t}\big(D_{t}+\widehat{\Sigma}(\ch_t+D_t)^{-1}\ch_t\big)^{-1}\widehat{\Sigma}.\label{eq:hh2}
\end{equation}
\end{proposition}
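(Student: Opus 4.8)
The plan is to derive~\eqref{eq:hh2} from Algorithm~\ref{algo1} by squaring the update for $H_{t+1}$ and carefully absorbing the factor $R_t^{-1/2}$. Write $\Sigma_t := H_tH_t^\top + D_t = \ch_t + D_t$ and $A_t := \widehat{\Sigma}\Sigma_t^{-1}H_t$, so that~\eqref{eq:h} reads $H_{t+1} = A_t R_t^{-1/2}$ and hence
\[
\ch_{t+1} = H_{t+1}H_{t+1}^\top = A_t R_t^{-1} A_t^\top = \widehat{\Sigma}\Sigma_t^{-1}H_t R_t^{-1} H_t^\top \Sigma_t^{-1}\widehat{\Sigma}.
\]
Note that $R_t^{-1/2}$ has disappeared in favour of $R_t^{-1}$, so no square roots remain; this is the point of running the algorithm for $\ch_t$. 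It remains to show $H_t R_t^{-1} H_t^\top = \ch_t\big(D_t + \widehat{\Sigma}(\ch_t+D_t)^{-1}\ch_t\big)^{-1}$, after which the claimed formula follows by substituting $\ch_t = H_tH_t^\top$ in the two outer factors as well.

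The main work is the identity $H_t R_t^{-1} H_t^\top = \ch_t\big(D_t + \widehat{\Sigma}\Sigma_t^{-1}\ch_t\big)^{-1}$. From~\eqref{eq:r}, $R_t = I + H_t^\top\Sigma_t^{-1}(\widehat{\Sigma} - \Sigma_t)\Sigma_t^{-1}H_t$, which has the form $I + H_t^\top B_t H_t$ with $B_t := \Sigma_t^{-1}(\widehat{\Sigma}-\Sigma_t)\Sigma_t^{-1}$ symmetric. Applying a push-through / Woodbury-type manipulation (of the kind collected in Lemma~\ref{lemma:matrix}), one gets $H_t R_t^{-1} = (I + H_tH_t^\top B_t)^{-1} H_t = (I + \ch_t B_t)^{-1}H_t$, and therefore
\[
H_t R_t^{-1} H_t^\top = (I + \ch_t B_t)^{-1}\ch_t = \big(\ch_t^{-1} + B_t\big)^{-1}
\]
whenever $\ch_t$ is invertible. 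Now compute $\ch_t^{-1} + B_t$: since $B_t = \Sigma_t^{-1}\widehat{\Sigma}\Sigma_t^{-1} - \Sigma_t^{-1}$ and $\Sigma_t = \ch_t + D_t$, one checks by direct block/algebraic manipulation that $\big(\ch_t^{-1} + B_t\big)^{-1} = \ch_t\big(D_t + \widehat{\Sigma}\Sigma_t^{-1}\ch_t\big)^{-1}$; equivalently, multiply both sides by $D_t + \widehat{\Sigma}\Sigma_t^{-1}\ch_t$ on the right and by $\ch_t^{-1}+B_t$ on the left and verify the resulting identity reduces, using $\Sigma_t^{-1}\ch_t = I - \Sigma_t^{-1}D_t$, to a triviality. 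Combining the two displays yields~\eqref{eq:hh2}.

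The one subtlety — and the step I expect to need the most care — is the invertibility of $\ch_t = H_tH_t^\top$, which fails as soon as $n > k$ (the generic case here), so the intermediate formulas with $\ch_t^{-1}$ are only formal. The clean way around this is to avoid inverting $\ch_t$ altogether: keep everything in the form $H_t R_t^{-1} H_t^\top = (I+\ch_t B_t)^{-1}\ch_t$ and prove the target identity $(I+\ch_t B_t)^{-1}\ch_t = \ch_t\big(D_t + \widehat{\Sigma}\Sigma_t^{-1}\ch_t\big)^{-1}$ directly, i.e. show $\ch_t = (I+\ch_t B_t)\,\ch_t\big(D_t+\widehat{\Sigma}\Sigma_t^{-1}\ch_t\big)^{-1}$, which after right-multiplication by $D_t + \widehat{\Sigma}\Sigma_t^{-1}\ch_t$ becomes the polynomial identity $\ch_t\big(D_t+\widehat{\Sigma}\Sigma_t^{-1}\ch_t\big) = (I+\ch_t B_t)\ch_t$; expanding the right side using $B_t\ch_t = \Sigma_t^{-1}\widehat{\Sigma}\Sigma_t^{-1}\ch_t - \Sigma_t^{-1}\ch_t$ and $\Sigma_t^{-1}\ch_t = I - \Sigma_t^{-1}D_t$ makes both sides equal. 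Since $R_t>0$ (noted after~\eqref{eq:r}) and $D_t + \widehat{\Sigma}\Sigma_t^{-1}\ch_t$ is invertible (it equals $\Sigma_t - \ch_t + \widehat{\Sigma}\Sigma_t^{-1}\ch_t = \Sigma_t\Sigma_t^{-1}(\,\cdot\,)$, and one checks positivity of the relevant factor as in Corollary~\ref{cor:ihht}), all inverses appearing in~\eqref{eq:hh2} are legitimate, and no inverse of $\ch_t$ is ever taken. Finally, I would remark that the recursion~\eqref{eq:hh2} is self-contained in $(\ch_t, D_t)$ once $D_{t+1} = \Delta(\widehat{\Sigma} - \ch_{t+1})$ is appended, matching~\eqref{eq:d}.
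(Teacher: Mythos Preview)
Your strategy is exactly the paper's: write $\ch_{t+1}=\widehat{\Sigma}\Sigma_t^{-1}H_tR_t^{-1}H_t^\top\Sigma_t^{-1}\widehat{\Sigma}$ and then simplify $H_tR_t^{-1}H_t^\top$ via the push-through identity $(I+H^\top PH)^{-1}H^\top=H^\top(I+PHH^\top)^{-1}$. However, there is a genuine algebraic slip in the intermediate target you set yourself.

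You claim $H_tR_t^{-1}H_t^\top=\ch_t\big(D_t+\widehat{\Sigma}\Sigma_t^{-1}\ch_t\big)^{-1}$, but this is off by a factor of $\Sigma_t=\ch_t+D_t$ on the right. Indeed, from your own (correct) computation $H_tR_t^{-1}H_t^\top=(I+\ch_tB_t)^{-1}\ch_t$ with $B_t=\Sigma_t^{-1}(\widehat{\Sigma}-\Sigma_t)\Sigma_t^{-1}$, one has
\[
I+\ch_tB_t \;=\; I-\ch_t\Sigma_t^{-1}+\ch_t\Sigma_t^{-1}\widehat{\Sigma}\Sigma_t^{-1}
\;=\; D_t\Sigma_t^{-1}+\ch_t\Sigma_t^{-1}\widehat{\Sigma}\Sigma_t^{-1}
\;=\;\big(D_t+\ch_t\Sigma_t^{-1}\widehat{\Sigma}\big)\Sigma_t^{-1},
\]
so $(I+\ch_tB_t)^{-1}\ch_t=\Sigma_t\big(D_t+\ch_t\Sigma_t^{-1}\widehat{\Sigma}\big)^{-1}\ch_t$, which by symmetry equals $\ch_t\big(D_t+\widehat{\Sigma}\Sigma_t^{-1}\ch_t\big)^{-1}\Sigma_t$. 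Your ``polynomial identity'' check $\ch_t(D_t+\widehat{\Sigma}\Sigma_t^{-1}\ch_t)=(I+\ch_tB_t)\ch_t$ does \emph{not} collapse to a triviality if you actually expand it; the two sides differ precisely by this missing $\Sigma_t$. A scalar sanity check ($n=k=1$) already shows the discrepancy.

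The fix is immediate and matches the paper: use
\[
H_tR_t^{-1}H_t^\top \;=\; \ch_t\big(D_t+\widehat{\Sigma}\Sigma_t^{-1}\ch_t\big)^{-1}(\ch_t+D_t),
\]
and insert this into $\ch_{t+1}=\widehat{\Sigma}\Sigma_t^{-1}\,[\,\cdot\,]\,\Sigma_t^{-1}\widehat{\Sigma}$; the trailing $(\ch_t+D_t)$ cancels the right-hand $\Sigma_t^{-1}$, yielding~\eqref{eq:hh2}. With this correction your argument is essentially the paper's proof. Your remarks on avoiding $\ch_t^{-1}$ and on the legitimacy of the inverses are fine and do not need changing.
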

\begin{proof}
From Equation~(\ref{eq:h}) one immediately gets
\begin{equation}\label{eq:hhr}
\ch_{t+1}= H_{t+1}H_{t+1}^\top =
\widehat{\Sigma}(\ch_t+D_t)^{-1}H_tR_t^{-1}H_t^\top(\ch_t+D_t)^{-1}\widehat{\Sigma}.
\end{equation}
The key step in the proof is an application of the elementary identity
\[
(I+H^\top P H)^{-1}H^\top=H^\top(I+PHH^\top)^{-1},
\]
valid for all $H$ and $P$ of appropriate dimensions for which both
inverses exist. Note that, by Corollary~\ref{cor:ibc}, the two inverses either both exist or both do not exist.
We have already seen that $R_t$ is invertible and of the type $I+HPH^\top$. Following this recipe, we
compute
\begin{align*}
R_t^{-1}H_t^\top & = H_t^\top\big(I-(\ch_t+D_t)^{-1}\ch_t+(\ch_t+D_t)^{-1}\widehat{\Sigma}(\ch_t+D_t)^{-1}\ch_t\big)^{-1} \\
& = H_t^\top\big( (\ch_t+D_t)^{-1}D_t+(\ch_t+D_t)^{-1}\widehat{\Sigma}(\ch_t+D_t)^{-1}\ch_t\big)^{-1} \\
& = H_t^\top \big(D_t +
\widehat{\Sigma}(\ch_t+D_t)^{-1}\ch_t\big)^{-1} (\ch_t+D_t).
\end{align*}
Insertion of this result into~(\ref{eq:hhr}) yields (\ref{eq:hh2}).
\end{proof}
\noindent One can run the update Equation~(\ref{eq:hhr}), for any number $T$ of steps, and then switch back to $H_T$,
taking any $n\times k$ factor of $\ch_T$ \emph{i.e.} solve $\ch_T = H_T H_T^\top $. Since Equation~(\ref{eq:hhr})
transforms $\ch_t$ into $\ch_{t+1}$ preserving the rank, the latter factorization is always possible.

\medskip \noindent It is apparent that the second computational issue we mentioned above, concerning the inversion
of $n\times n$ matrices at each step, affects also Algorithm~\ref{algo2}. The alternative form of the update equations
derived below only requires the inversion of $k \times k$ matrices: a very desirable property since $k$ is usually much
smaller than $n$. Referring to Algorithm~\ref{algo1}, since $D_t$ is invertible, apply Corollary~\ref{cor:inv} to find
\[
(H_tH_t^\top + D_t)^{-1}H_t= D_t^{-1}H_t(I+H_t^\top D_t^{-1}H_t)^{-1}.
\]
The alternative expression for $R_t$ is
\[
R_t=(I+H_t^\top D_t^{-1}H_t)^{-1}+(I+H_t^\top
D_t^{-1}H_t)^{-1}H_t^\top D_t^{-1}\widehat{\Sigma}
D_t^{-1}H_t(I+H_t^\top D_t^{-1})^{-1}.
\]
The update formula~(\ref{eq:h}) can therefore be replaced with
\[
H_{t+1}  = \widehat{\Sigma}D_t^{-1}H_t(I+H_t^\top
D_t^{-1}H_t)^{-1}R_t^{-1/2}.
\]
Similar results can be derived also for Algorithm~\ref{algo2}.

\subsection{Asymptotic properties}
In the portmanteau proposition below we collect the asymptotic
properties of Algorithm~\ref{algo1}, also quantifying the
I-divergence decrease at each step.

\begin{proposition}\label{prop:properties}
For Algorithm~\ref{algo1} the following hold.
\begin{itemize}

\item[(a)] $H_tH_t^\top \leq \widehat{\Sigma}$ for all $t\geq 1$.

\item[(b)] If $D_0>0$ and $\Delta(\widehat\Sigma - D_0)>0$ then $D_t>0$ for all $t\geq 1$.

\item[(c)] The matrices $R_t$ are invertible for all $t\geq 1$.

\item[(d)] If $H_t H_t^\top + D_t = \widehat\Sigma$ \,then $H_{t+1}=H_t, \,D_{t+1}=D_t$.

\item[(e)] Decrease of the objective function:
$$\ii(\widehat{\Sigma}||\widehat\Sigma_t) - \ii(\widehat{\Sigma}||\widehat\Sigma_{t+1})=
\ii(\Sigma_{1,t+1}||\Sigma_{1,t})+\ii(\Sigma_{0,t}||
\Sigma_{0,t+1}),$$ where $\widehat\Sigma_t = H_tH_t^\top + D_t$ is
the $t$-th approximation of $\widehat\Sigma$, and $\Sigma_{0,t},
\Sigma_{1,t}$ were defined in subsection~\ref{section:subalgo}.

\item[(f)] The interior limit points $(H,D)$ of the
algorithm satisfy
%\begin{align*}
%H  & = (\widehat{\Sigma}-HH^\top)D^{-1}H, \\
%D & = \Delta(\widehat{\Sigma}-HH^\top),
%\end{align*}
\begin{equation} \label{stateqs}
H  = (\widehat{\Sigma}-HH^\top)D^{-1}H, \qquad\qquad D =
\Delta(\widehat{\Sigma}-HH^\top),
\end{equation}
which are the ML equations~(\ref{eq:mlh}) and (\ref{eq:mld}).
If $(H, D)$ is a solution to these equation also $(HU, D)$ is a solution, for any
unitary matrix $U \in \openR^{k \times k}$.

\item[(g)] Limit points  $(\ch,D)$, see~(\ref{eq:hhr}), satisfy
$$\ch=\widehat{\Sigma}(\ch+D)^{-1}\ch, \qquad\qquad D=\Delta(\widehat{\Sigma}-\ch).$$
\end{itemize}
\end{proposition}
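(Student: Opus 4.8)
The plan is to read properties (a)--(e) directly off the two partial minimizations of Section~\ref{section:lift} and the explicit forms of their solutions, and then to deduce the limit-point statements (f), (g) from the monotonicity in (e). Recall that $\Sigma_{0,t}$ is the minimizer in Problem~\ref{problem:pm1} with input $\Sigma_{1,t}=\Sigma(H_t,D_t,Q_t)$, and $(H_{t+1},D_{t+1},Q_{t+1})$ the minimizer in Problem~\ref{problem:pm2} with input $\Sigma_{0,t}$, which has upper left block $\widehat\Sigma$. By Proposition~\ref{prop:pm2}, $H_{t+1}H_{t+1}^\top=(\Sigma_{0,t})_{12}(\Sigma_{0,t})_{22}^{-1}(\Sigma_{0,t})_{21}$ and $D_{t+1}=\Delta\big(\widehat\Sigma-(\Sigma_{0,t})_{12}(\Sigma_{0,t})_{22}^{-1}(\Sigma_{0,t})_{21}\big)$. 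Since $\Sigma_{0,t}\ge0$, its Schur complement relative to the $(2,2)$-block is nonnegative, which is (a); when $\Sigma_{0,t}>0$ this Schur complement is strictly positive, hence $D_{t+1}>0$ by Remark~\ref{remark:hhd}. For (b) one checks inductively that all $\Sigma_{1,t}$ and $\Sigma_{0,t}$ are strictly positive definite: the hypotheses on $D_0$ (with $Q_0$ invertible) make $\Sigma_{1,0}>0$ — its Schur complement in~(\ref{eq:dec1}) is $D_0$ — and force $D_1>0$, after which Proposition~\ref{prop:pm1} and Remark~\ref{remark:hhd} propagate positivity, invertibility of $Q_{t+1}=(Q_t^\top R_tQ_t)^{1/2}$ being preserved because $R_t>0$. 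Part (c) is the invertibility of $R_t$ already noted after~(\ref{eq:r}): in $R_t=(I-H_t^\top\widehat\Sigma_t^{-1}H_t)+H_t^\top\widehat\Sigma_t^{-1}\widehat\Sigma\,\widehat\Sigma_t^{-1}H_t$ the first term is $>0$ by Corollary~\ref{cor:ihht} (as $\widehat\Sigma_t$ is invertible) and the second is $\ge0$ since $\widehat\Sigma>0$.

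Part (d) is a one-line substitution: $\widehat\Sigma_t=\widehat\Sigma$ makes $\widehat\Sigma\,\widehat\Sigma_t^{-1}=I$, so the last two terms of~(\ref{eq:r}) cancel, $R_t=I$, and then~(\ref{eq:h}) gives $H_{t+1}=H_t$ and $D_{t+1}=\Delta(\widehat\Sigma-H_tH_t^\top)=\Delta(D_t)=D_t$. For (e), by~(\ref{eq:011}) one has $\ii(\Sigma_{0,t}||\Sigma_{1,t})=\ii(\widehat\Sigma||\widehat\Sigma_t)$ and likewise at $t+1$; combining the Pythagorean rule~(\ref{eq:p2}) for Problem~\ref{problem:pm2} (input $\Sigma_{0,t}$, minimizer $\Sigma_{1,t+1}$, competitor $\Sigma_{1,t}\in\bs_1$) with the Pythagorean rule~(\ref{eq:pyth1}) for Problem~\ref{problem:pm1} (input $\Sigma_{1,t+1}$, minimizer $\Sigma_{0,t+1}$, competitor $\Sigma_{0,t}\in\bs_0$),
\begin{align*}
\ii(\widehat\Sigma||\widehat\Sigma_t)&=\ii(\Sigma_{0,t}||\Sigma_{1,t})=\ii(\Sigma_{0,t}||\Sigma_{1,t+1})+\ii(\Sigma_{1,t+1}||\Sigma_{1,t})\\
&=\ii(\Sigma_{0,t}||\Sigma_{0,t+1})+\ii(\Sigma_{0,t+1}||\Sigma_{1,t+1})+\ii(\Sigma_{1,t+1}||\Sigma_{1,t})\\
&=\ii(\widehat\Sigma||\widehat\Sigma_{t+1})+\ii(\Sigma_{0,t}||\Sigma_{0,t+1})+\ii(\Sigma_{1,t+1}||\Sigma_{1,t}),
\end{align*}
which rearranges to (e).

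For (f) and (g), write $\phi(\ch,D)=\ii(\widehat\Sigma||\ch+D)$ and let $M$ denote one step of the algorithm, acting on $(\ch_t,D_t)$ with $\ch_t=H_tH_t^\top$ via~(\ref{eq:hh2}) and $D_{t+1}=\Delta(\widehat\Sigma-\ch_{t+1})$. By (e), $\phi(\widehat\Sigma_t)$ is non-increasing and $\ge0$, hence convergent; moreover (e) shows $\phi$ strictly decreases along $M$ unless at a fixed point: if $\phi(M(x))=\phi(x)$ for $x=(\ch,D)$, then (e) forces the lifted update to fix $\Sigma_1$, and, taking the lift with $Q=I$, comparison of the $(2,2)$- and $(1,2)$-blocks of $\Sigma_1$ yields $R=I$ and $H=\widehat\Sigma(\ch+D)^{-1}H$, i.e.\ $M(x)=x$. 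By (a) and the $D$-update the iterates $(\ch_t,D_t)$, $t\ge1$, lie in a compact set; if $(\ch,D)$ with $D>0$ is a limit point along a subsequence $t_k$, then $\ch+D>0$, so $M$ is continuous at $(\ch,D)$ and $\phi(M(\ch,D))=\lim_k\phi(\widehat\Sigma_{t_k+1})=\lim_k\phi(\widehat\Sigma_{t_k})=\phi(\ch,D)$, which by the strict-decrease property forces $M(\ch,D)=(\ch,D)$. Thus $(\ch,D)$ satisfies $\ch=\widehat\Sigma(\ch+D)^{-1}\ch$ and $D=\Delta(\widehat\Sigma-\ch)$ — that is (g) — and, as just noted, the associated $H$ (of full column rank) satisfies $H=\widehat\Sigma(\ch+D)^{-1}H$, i.e.~(\ref{eq:mlh2}), equivalently~(\ref{stateqs}), the ML equations~(\ref{eq:mlh}) and~(\ref{eq:mld}) of (f). Both equations depend on $H$ only through $HH^\top$, so $(HU,D)$ is again a solution.

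The step requiring real care is this last one: converting the vanishing of successive \emph{objective} values into the statement that a limit point is a fixed point of $M$, which rests on continuity of $M$ and breaks down on the locus where $\ch+D$ is singular. That is exactly why (f), (g) are confined to interior ($D>0$) limit points, and it motivates the separate analysis of Section~\ref{section:SingularD}; everything else is bookkeeping on top of the results of Section~\ref{section:lift}.
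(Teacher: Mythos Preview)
Your treatment of (a)--(e) follows the paper's line exactly, with more detail filled in; in particular, your chained Pythagorean computation for (e) is precisely what the paper means by ``concatenation of Proposition~\ref{prop:pm1} and Proposition~\ref{prop:pm2}'', and your use of Remark~\ref{remark:hhd} and Corollary~\ref{cor:ihht} for (a)--(c) is the intended argument.

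For (f) and (g) you take a genuinely different route. The paper switches to Algorithm~\ref{algo2} (the $(L,P,D)$ parametrization, which has no square roots), simply \emph{assumes that all variables converge}, and then reads the stationary equation $L=\widehat{\Sigma}(LPL^\top+D)^{-1}L$ directly from~(\ref{eq:l}); translating back via $H=LQ^\top$ gives~(\ref{stateqs}), and (g) is immediate. Your argument is more careful: you use the monotone convergence of $\phi(\widehat\Sigma_t)$ from (e), continuity of the $\ch$-update at interior points, and compactness from (a) to show that every interior accumulation point is a fixed point of the $\ch$-map; then, lifting with $Q=I$ and comparing the $(1,2)$- and $(2,2)$-blocks of the fixed $\Sigma_1$, you recover $R=I$ and $H=\widehat\Sigma(\ch+D)^{-1}H$. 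This actually proves the statement as written (for \emph{limit points}, not merely under the hypothesis of convergence), whereas the paper's argument, strictly speaking, only covers convergent sequences. The paper's approach buys brevity and avoids the block-matching step by exploiting the square-root-free recursion~(\ref{eq:l}); yours buys rigor at the price of the extra fixed-point machinery. Both are correct, and your closing remark about why the argument is confined to $D>0$ is exactly the point that motivates Section~\ref{section:SingularD}.
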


\begin{proof}
\noindent(a) This follows from Remark~\ref{remark:hhd} and the construction
of the algorithm as a combination of the two partial minimizations.

\noindent (b) This similarly follows from Remark~\ref{remark:hhd}.

\noindent (c)  Use the identity $I-H_t^\top(H_tH_t^\top +
D_t)^{-1}H_t=(I+H_t^\top D_t^{-1}H_t)^{-1}$ and $\widehat{\Sigma}$
nonnegative definite.

\noindent (d) In this case, Equation~(\ref{eq:r}) shows that
$R_t=I$ and substituting this into the update equations yields the
conclusion.

\noindent (e) As matter of fact, we can express the decrease as a
sum of two I-divergences, since the algorithm is the superposition
of the two partial minimization problems. The results follows from a
concatenation of Proposition~\ref{prop:pm1} and
Proposition~\ref{prop:pm2}.

\noindent (f) We consider Algorithm~\ref{algo2} first. Assume that all
variables converge. Then, from~(\ref{eq:l}), for limit points $L, P, D$
it holds that
\[
L=\widehat{\Sigma}(LPL^\top+ D)^{-1}L,
\]
which coincides with equation~(\ref{eq:mlh}). Let then $Q$ be
a square root of $P$ and $H=LQ^{\top}$. This gives the first of the desired relations. The rest is trivial.

\noindent (g) This follows by inserting the result of (f).
\end{proof}

\noindent In part~(f) of Proposition~\ref{prop:properties} we have
made the assumption that the limit points are interior points.
This assumption does not always hold true, it may happen that a
limit point $(H, D)$ is such that $D$ contains zeros on the
diagonal. We will treat this extensively in
Section~\ref{section:statpoints} in connection with a restricted
optimization problem, in which it is imposed that $D$ has a number
of zeros on the diagonal.

\section{Comparison with the EM algorithm}\label{sec:em}
\setcounter{equation}{0}
\cite{rubinthayer1982} put forward a version of the EM algorithm (see~\cite{dlr})
in the context of estimation for FA models. Their algorithm is as follows.

\begin{algorithm}[EM]\label{em}
\begin{align}
H_{t+1}  & = \widehat\Sigma(H_t H_t^\top + D_t)^{-1}H_t R_t^{-1} \label{EM1}\\
D_{t+1} & = \Delta(\widehat{\Sigma}- H_{t+1} R_t H_{t+1}^\top),
\label{EM2}
\end{align}
where  $R_t= I- H_t^\top (H_t H_t^\top + D_t)^{-1} (H_t H_t^\top +
D_t - \widehat{\Sigma})(H_t H_t^\top + D_t)^{-1} H_t$.
\end{algorithm}
\noindent The EM Algorithm~\ref{em} differs in both
equations from our Algorithm~\ref{algo1}.
%In the update
%equation~\ref{EM1}, the EM algorithm does not use a square root of
%$R_t$, whereas we have $R_t^{1/2}$ in~(\ref{eq:h}). In the update
%equation~\ref{EM2}, there is a factor $R_t$, whereas $R_t$ is not
%present in~(\ref{eq:d}).
It is well known that EM algorithms can be derived as alternating minimizations,
see~\cite{csiszar}, it is therefore interesting to investigate how Algorithm~\ref{em} can
be derived within our framework. Thereto one considers the first partial minimization
problem together with the {\em constrained} second partial
minimization Problem~\ref{problem:pm2c}, the constraint
being $Q=Q_0$, for some  $Q_0$. Later on we will see that the
particular choice of $Q_0$, as long as it is invertible, is
irrelevant. The concatenation of these two problems results in the
EM Algorithm~\ref{em}, as is detailed below.

Starting at $(H_t,D_t,Q_0)$, one performs the first partial
minimization, that results in the matrix
\[
\begin{pmatrix}
\widehat{\Sigma} & \widehat{\Sigma}(H_tH_t+D_t)^{-1}H_tQ_0 \\
Q_0^\top H_t^\top(H_tH_t+D_t)^{-1}\widehat{\Sigma} & Q_0^\top R_t
Q_0
\end{pmatrix}.
\]
Performing now the \emph{constrained} second minimization, according to the results of
Proposition~\ref{prop:sigma0}, one obtains
\begin{align}
H_{t+1} & = \widehat{\Sigma}(H_tH_t^\top +D_t)^{-1}H_t R_t^{-1} \label{eq:h2}\\
D_{t+1} & = \Delta\big(\widehat{\Sigma}-\widehat{\Sigma}(H_tH_t^\top
+D_t)^{-1}H_t R_t ^{-1}  H_t^\top(H_tH_t^\top
+D_t)^{-1}\widehat{\Sigma}\big).\label{eq:d2}
\end{align}
Substitution of~(\ref{eq:h2}) into~(\ref{eq:d2}) yields
\[
D_{t+1}=\Delta(\widehat{\Sigma}-H_{t+1}R_tH_{t+1}^\top).
\]
One sees that the matrix $Q_0$ does not appear in the recursion, just as the matrices
$Q_t$ do not occur in Algorithm~\ref{algo1}.

Both Algorithms~\ref{algo1} and~\ref{em} are the result of two
partial minimization problems. The latter algorithm differs from
ours in that the second partial minimization is \emph{constrained}.
It is therefore reasonable to expect that, from the point of view of
minimizing I-divergence, Algorithm~\ref{algo1} yields a better
performance, although comparisons must take into account that the
initial parameters for the two \emph{species} of the second partial
minimization will in general be different. We will illustrate these considerations by some numerical examples in Section~\ref{section:numerics}.

We also note that for Algorithm~\ref{algo1} it was possible to
identify the update gain at each step, see
Proposition~\ref{prop:properties}(e), resulting from the two
Pythagorean rules. For the EM algorithm a similar formula cannot be
given, because for the constrained second partial minimization
a Pythagorean rule does not hold, see the
discussion after Proposition~\ref{prop:sigma0} in
Section~\ref{section:pms}.

%At various places it has been argued that the convergence of the EM
%algorithm (in general) can be poor in certain practical situations.
%Perhaps our Algorithm~\ref{algo1} performs better, but this requires
%extensive comparisons in a variety of test cases, and is at present
%uncertain.\footnote{So numerical experiments are needed.}
%

\section{Singular $D$}\label{section:SingularD}
\setcounter{equation}{0}

It has been known for a long time, see e.g.~\cite{joreskog}, that
numerical solutions to the ML equations (see
Section~\ref{sec:problemstatement}) often produce a nearly singular
matrix $D$. This motivates the investigation of the stationary
points $(H,D)$ of Algorithm~\ref{algo1} with singular $D$,
\emph{i.e.} with zeros on the diagonal
(Section~\ref{section:statpoints}). Naturally connected to this is
the analysis of the minimization Problem~\ref{problem2} when $D$ is
\emph{constrained}, at the outset, to be singular
(Section~\ref{sec:singularD}), and the investigation of its
consequences for the minimization algorithm of
Proposition~\ref{prop:ch} (Section~\ref{sec:algo0}).

\subsection{Stationary points $(H,D)$ with singular $D$}  \label{section:statpoints}

As mentioned before, already in~\cite{joreskog} it has been observed
that, numerically maximizing the likelihood, one often reaches
matrices $D$ that are nearly singular. This motivates the
investigation of the stationary points $(H,D)$ of
Algorithm~\ref{algo1} for which $D$ is singular, i.e.
\begin{equation}\label{eq:dd1}
D=\begin{pmatrix}
D_1 & 0 \\
0 &  D_2
\end{pmatrix}=\begin{pmatrix}
D_1 & 0 \\
0 &  0
\end{pmatrix},
\end{equation}
where $D_1>0$  has size $n_1\times n_1$ and the lower right zero
block has size $n_2\times n_2$, with $n_1+n_2=n$.

\noindent Accordingly we partition $H\in \R^{n\times k}$ as
\begin{equation}\label{eq:hh12}
H=\begin{pmatrix} H_1 \\H_2
\end{pmatrix},
\end{equation}
where $H_1\in\R^{n_1\times k}$ and $H_2\in\R^{n_2\times k}$. Then
\begin{equation} \label{hhd}
HH^\top + D =
\begin{pmatrix}
H_1H_1^\top + D_1& H_1H_2^\top \\
H_2H_1^\top           & H_2H_2^\top
\end{pmatrix}.
\end{equation}

\noindent We recall that Problem~\ref{problem2} calls for the
minimization, over $H$ and $D$, of the functional
$\ii(\widehat{\Sigma}||HH^\top + D)$, which is finite if and only if
$HH^\top +D$ is strictly positive definite. In view of~(\ref{hhd}),
this happens if and only if $$H_2H_2^\top>0,$$ the standing
assumption of this section. A direct consequence of this assumption
is that $n_2 \le k$.

\noindent The \emph{given} matrix $\widehat{\Sigma}$ will be
similarly decomposed as
\begin{align}\label{eq:ssigma}
\widehat \Sigma = & \begin{pmatrix} \widehat \Sigma_{11} & \widehat \Sigma_{12} \\
\widehat \Sigma_{21} & \widehat \Sigma_{22}.
\end{pmatrix}
\end{align}
\begin{proposition}\label{prop:d2}
If $(H,D)$ is a stationary point of the algorithm, with $D$ as in
(\ref{eq:dd1}), then the given matrix $\widehat{\Sigma}$ is such
that $\widehat{\Sigma}_{22}=H_{2}H_{2}^{\top}$ and
$\widehat{\Sigma}_{12}=H_{1}H_{2}^{\top}$.
\end{proposition}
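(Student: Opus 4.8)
The plan is to exploit the stationarity equations from Proposition~\ref{prop:properties}(f), namely $H=(\widehat{\Sigma}-HH^\top)D^{-1}H$ together with $D=\Delta(\widehat{\Sigma}-HH^\top)$, and specialize them to the block structure dictated by~(\ref{eq:dd1}) and~(\ref{eq:hh12}). Write $K:=\widehat{\Sigma}-HH^\top$ and partition it as $K=\begin{pmatrix} K_{11} & K_{12} \\ K_{21} & K_{22}\end{pmatrix}$ with $K_{11}=\widehat{\Sigma}_{11}-H_1H_1^\top$, $K_{12}=\widehat{\Sigma}_{12}-H_1H_2^\top$, $K_{22}=\widehat{\Sigma}_{22}-H_2H_2^\top$. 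Since $D^{-1}$ is only defined on the non-singular part, the right route is to return to the equivalent form~(\ref{eq:mlh2}), $H=\widehat{\Sigma}(HH^\top+D)^{-1}H$, which remains meaningful when $D$ is singular and which is what the algorithm actually realizes in the limit (via Proposition~\ref{prop:ch}, $\ch=\widehat{\Sigma}(\ch+D)^{-1}\ch$). So the first step is to justify, from part~(f)/(g), that a boundary stationary point with $D$ as in~(\ref{eq:dd1}) satisfies $H=\widehat{\Sigma}(HH^\top+D)^{-1}H$.

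The second step is the block computation. Set $M:=HH^\top+D$, which by~(\ref{hhd}) has blocks $M_{11}=H_1H_1^\top+D_1$, $M_{12}=H_1H_2^\top$, $M_{22}=H_2H_2^\top$. Under the standing assumption $H_2H_2^\top>0$, the Schur complement machinery of Lemma~\ref{lemma:matrix} applies: $M$ is invertible iff its Schur complement $M_{11}-M_{12}M_{22}^{-1}M_{21}=H_1H_1^\top+D_1-H_1H_2^\top(H_2H_2^\top)^{-1}H_2H_1^\top$ is invertible, which it is because $D_1>0$ and the remaining part is $\ge 0$ (it is $H_1(I-H_2^\top(H_2H_2^\top)^{-1}H_2)H_1^\top$, a Gram-type expression). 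Then I would write out $\widehat{\Sigma}M^{-1}H=H$ blockwise. The cleanest device is to multiply the stationarity relation on the right so as to isolate the bottom block $H_2$: from $M^{-1}H=\widehat{\Sigma}^{-1}H$ (rearranging $H=\widehat{\Sigma}M^{-1}H$), and using that $\begin{pmatrix} H_1H_2^\top \\ H_2H_2^\top\end{pmatrix}$ equals $M\begin{pmatrix}0\\ I\end{pmatrix}$ minus $\begin{pmatrix}D_1\\0\end{pmatrix}\cdot 0$ adjustments — more directly, observe $M\begin{pmatrix}0\\I\end{pmatrix}=\begin{pmatrix}H_1H_2^\top\\H_2H_2^\top\end{pmatrix}=\begin{pmatrix}H_1\\H_2\end{pmatrix}H_2^\top=HH_2^\top$, hence $\begin{pmatrix}0\\I\end{pmatrix}=M^{-1}HH_2^\top$.

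The third step closes the argument: premultiply $\begin{pmatrix}0\\I\end{pmatrix}=M^{-1}HH_2^\top$ by $\widehat{\Sigma}$ and use $\widehat{\Sigma}M^{-1}H=H$ to get $\widehat{\Sigma}\begin{pmatrix}0\\I\end{pmatrix}=H H_2^\top$, i.e. $\begin{pmatrix}\widehat{\Sigma}_{12}\\ \widehat{\Sigma}_{22}\end{pmatrix}=\begin{pmatrix}H_1H_2^\top\\ H_2H_2^\top\end{pmatrix}$. Reading off the two blocks gives exactly $\widehat{\Sigma}_{12}=H_1H_2^\top$ and $\widehat{\Sigma}_{22}=H_2H_2^\top$, as claimed. (One should double-check the direction: $H=\widehat{\Sigma}M^{-1}H$ does not immediately give $M^{-1}H=\widehat{\Sigma}^{-1}H$ unless one is careful, since $H$ is not square; but the identity $\widehat{\Sigma}(M^{-1}HH_2^\top)=HH_2^\top$ follows directly by right-multiplying the stationarity equation itself by $H_2^\top$, so the detour through $\widehat{\Sigma}^{-1}$ can be avoided entirely.)

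The main obstacle I anticipate is the first step: making precise in what sense a \emph{boundary} stationary point satisfies a limiting form of the ML/update equations, since Proposition~\ref{prop:properties}(f) is stated for interior limit points and $D^{-1}$ literally does not exist here. The resolution is to pass to the rank-preserving recursion~(\ref{eq:hh2}) for $\ch_t=H_tH_t^\top$, whose limit points obey $\ch=\widehat{\Sigma}(\ch+D)^{-1}\ch$ with no inversion of $D$ required (Proposition~\ref{prop:properties}(g)); combined with $D_{t+1}=\Delta(\widehat{\Sigma}-\ch_{t+1})$ this is the genuinely valid stationarity system at the boundary, and the block computation above goes through verbatim with $\ch=HH^\top$ in place of the product. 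The only other point needing care is the invertibility of $M=HH^\top+D$ at such a point, but this is guaranteed by the section's standing hypothesis $H_2H_2^\top>0$ via the Schur complement argument sketched above, so $\ii(\widehat{\Sigma}||M)<\infty$ and the recursion is well-defined in the limit.
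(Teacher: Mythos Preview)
Your argument is correct, but the paper takes a different and considerably more elementary route. The paper does not touch the $H$-stationarity equation at all; it uses only two ingredients that are unproblematic at the boundary: Proposition~\ref{prop:properties}(a), which gives $\widehat{\Sigma}-HH^\top\ge 0$ for every iterate (hence for any limit point), and the $D$-update $D=\Delta(\widehat{\Sigma}-HH^\top)$. From $D_2=0$ one gets $\Delta(\widehat{\Sigma}_{22}-H_2H_2^\top)=0$, and a positive semidefinite matrix with zero diagonal is zero, so $\widehat{\Sigma}_{22}=H_2H_2^\top$. Then $\widehat{\Sigma}-HH^\top\ge 0$ has a zero lower-right block, forcing the off-diagonal blocks to vanish as well, i.e.\ $\widehat{\Sigma}_{12}=H_1H_2^\top$.

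What each approach buys: the paper's proof completely sidesteps the ``main obstacle'' you flagged, since it never needs the $H$-equation, the invertibility of $M=HH^\top+D$, or the passage from the $\ch$-recursion to the fixed-point relation $\ch=\widehat{\Sigma}(\ch+D)^{-1}\ch$ at a boundary point. Your route, by contrast, is a slick purely algebraic computation once one grants $H=\widehat{\Sigma}M^{-1}H$; the identity $M\binom{0}{I}=HH_2^\top$ (which holds precisely because $D_2=0$) followed by the stationarity equation is a nice trick that yields both conclusions in one stroke. But the price is the justification you yourself identified, whereas the paper's semidefiniteness argument needs none of that machinery.
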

\begin{proof}  By
Proposition~\ref{prop:properties} $\widehat{\Sigma}-HH^{\top}$ is
nonnegative definite, as is its lower right block
$\widehat{\Sigma}_{22}-H_{2}H_{2}^{\top}$. Since
$D=\Delta(\widehat{\Sigma}-HH^{\top})$ and $D_{2}=0$, we get that
$\Delta(\widehat{\Sigma}_{22}-H_{2}H_{2}^{\top})=0$ and therefore
$\widehat{\Sigma}_{22}=H_{2}H_{2}^{\top}$. We conclude that
\[
\widehat{\Sigma}-HH^{\top} = \begin{pmatrix}
\widehat{\Sigma}_{11}-H_{1}H_{1}^{\top} & \widehat{\Sigma}_{12}-H_{1}H_{2}^{\top} \\
\widehat{\Sigma}_{21}-H_{2}H_{1}^{\top} & 0
\end{pmatrix} \ge 0,
\]
hence $\widehat{\Sigma}_{12}=H_{1}H_{2}^{\top}$.
\end{proof}

\noindent Define
\begin{equation} \label{Htilde}
\widetilde{H}_{1}:=H_{1}(I-H_{2}^{\top}(H_{2}H_{2}^{\top})^{-1}H_{2}).
\end{equation}
Since $I-H_{2}^{\top}(H_{2}H_{2}^{\top})^{-1}H_{2}$ is a projection,
one finds
\begin{equation} \label{HtildeHtilde}
\widetilde{H}_{1}\widetilde{H}_{1}^{\top}=H_{1}(I-H_{2}^{\top}(H_{2}H_{2}^{\top})^{-1}H_{2})H_{1}^{\top}.
\end{equation}
In view of Proposition~\ref{prop:d2} this becomes
\begin{equation} \label{HtildeHtildeS}
\widetilde{H}_{1}\widetilde{H}_{1}^{\top}=H_{1}H_{1}^{\top}-\widehat{\Sigma}_{12}\widehat{\Sigma}_{22}^{-1}\widehat{\Sigma}_{21}.
\end{equation}
Finally we need
\begin{align} \label{Sigma_tilde}
 {\widetilde \Sigma}_{11} := & \widehat \Sigma_{11} - \widehat
\Sigma_{12} \widehat \Sigma_{22}^{-1} \widehat \Sigma_{21}.
\end{align}

\begin{proposition}\label{prop:hd1}
If $(H,D)$ is a stationary point of the algorithm with $D_2=0$, then
\[
\ii(\widehat{\Sigma}||HH^{\top}+D)=\ii(\widetilde{\Sigma}_{11}||\widetilde{H}_{1}\widetilde{H}_{1}^{\top}+D_{1}).
\]
Moreover, the stationary equations~(\ref{stateqs}) reduce to
\begin{align*}
\widetilde{H}_{1} & =\widetilde{\Sigma}_{11}(\widetilde{H}_{1}\widetilde{H}_{1}^{\top}+D_{1})^{-1}\widetilde{H}_{1}=
(\widetilde\Sigma_{11}-\widetilde{H}_{1}\widetilde{H}_{1}^{\top})D_1^{-1}\widetilde{H}_1 \\
D_1 & =
\Delta(\widetilde\Sigma_{11}-\widetilde{H}_{1}\widetilde{H}_{1}^{\top}).
\end{align*}
\end{proposition}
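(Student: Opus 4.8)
The plan is to exploit the block structure forced by Proposition~\ref{prop:d2}, namely $\widehat\Sigma_{22}=H_2H_2^\top$ and $\widehat\Sigma_{12}=H_1H_2^\top$, in order to reduce the $(n+n)$-dimensional I-divergence to an $n_1$-dimensional one. First I would compute $HH^\top+D$ using~(\ref{hhd}) together with these identities: the lower-right block is $H_2H_2^\top=\widehat\Sigma_{22}$, the off-diagonal block is $H_1H_2^\top=\widehat\Sigma_{12}$, and the upper-left block is $H_1H_1^\top+D_1$. Thus $HH^\top+D$ and $\widehat\Sigma$ share the same second block-row and block-column, differing only in the $(1,1)$ block. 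The natural tool is the known decomposition of I-divergence under conditioning (Appendix~\ref{gauss}, or the Schur-complement factorization of the determinant and the trace identity encoded in Lemma~\ref{lemma:matrix}): for two covariance matrices agreeing on the $\Sigma_{22}$ and $\Sigma_{12}$ blocks, the I-divergence depends only on the Schur complements of the $(2,2)$ block, i.e.\ $\ii(\widehat\Sigma\|HH^\top+D)=\ii(\widetilde\Sigma_{11}\|\,\widehat\Sigma_{12}\widehat\Sigma_{22}^{-1}\widehat\Sigma_{21}+\text{(upper-left Schur complement of }HH^\top+D))$. Here $\widetilde\Sigma_{11}$ is the Schur complement of $\widehat\Sigma$ as in~(\ref{Sigma_tilde}), and the Schur complement of $HH^\top+D$ is $H_1H_1^\top+D_1-H_1H_2^\top(H_2H_2^\top)^{-1}H_2H_1^\top=\widetilde H_1\widetilde H_1^\top+D_1$ by~(\ref{HtildeHtilde}). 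This gives the claimed identity $\ii(\widehat\Sigma\|HH^\top+D)=\ii(\widetilde\Sigma_{11}\|\widetilde H_1\widetilde H_1^\top+D_1)$.

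Next, for the stationary equations, I would start from~(\ref{stateqs}), written as $H=\widehat\Sigma(HH^\top+D)^{-1}H$ and $D=\Delta(\widehat\Sigma-HH^\top)$. The second equation, under $D_2=0$, is exactly $D_1=\Delta(\widehat\Sigma_{11}-H_1H_1^\top)$; using $\widehat\Sigma_{12}\widehat\Sigma_{22}^{-1}\widehat\Sigma_{21}$ has zero contribution off the diagonal is not needed directly, but one should check $\Delta(\widetilde\Sigma_{11}-\widetilde H_1\widetilde H_1^\top)=\Delta(\widehat\Sigma_{11}-H_1H_1^\top)$, which follows by subtracting~(\ref{HtildeHtildeS}) from~(\ref{Sigma_tilde}): both sides of that difference change by the same full matrix $\widehat\Sigma_{12}\widehat\Sigma_{22}^{-1}\widehat\Sigma_{21}$, hence the diagonals of $\widetilde\Sigma_{11}-\widetilde H_1\widetilde H_1^\top$ and $\widehat\Sigma_{11}-H_1H_1^\top$ coincide. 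That yields $D_1=\Delta(\widetilde\Sigma_{11}-\widetilde H_1\widetilde H_1^\top)$ as claimed, and equivalence of the two forms of the $\widetilde H_1$ equation is the same algebraic manipulation that links~(\ref{eq:mlh}) and~(\ref{eq:mlh2}), applied to the reduced $n_1$-dimensional data $\widetilde\Sigma_{11}$ and reduced matrices $\widetilde H_1$, $D_1$.

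The substantive step is extracting the $\widetilde H_1$ equation from the first stationary equation. I would write $(HH^\top+D)^{-1}$ in block form using Lemma~\ref{lemma:matrix}, exploiting that the $(2,2)$ block of $HH^\top+D$ is invertible ($H_2H_2^\top>0$ is the standing assumption) so its Schur complement $\widetilde H_1\widetilde H_1^\top+D_1$ governs the inverse; a subtlety is that $\widetilde H_1\widetilde H_1^\top+D_1$ must be invertible, which follows since $HH^\top+D>0$ forces all its Schur complements to be positive definite. Writing $H=\binom{H_1}{H_2}$ and multiplying out $\widehat\Sigma(HH^\top+D)^{-1}H$ block-by-block, the second block-row identity $H_2=\widehat\Sigma_{21}(\cdots)+\widehat\Sigma_{22}(\cdots)$ should collapse, using $\widehat\Sigma_{21}=H_2H_1^\top$ and $\widehat\Sigma_{22}=H_2H_2^\top$, to a tautology or to a multiple of $H_2$ from the left; the first block-row identity then, after cancelling the contributions coming from the shared blocks, should reduce precisely to $\widetilde H_1=\widetilde\Sigma_{11}(\widetilde H_1\widetilde H_1^\top+D_1)^{-1}\widetilde H_1$. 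I expect the main obstacle to be bookkeeping: correctly tracking how the projection $I-H_2^\top(H_2H_2^\top)^{-1}H_2$ appears when one assembles the block products, and verifying that the left-multiplication by $H_1$ in forming $\widetilde H_1=H_1(I-H_2^\top(H_2H_2^\top)^{-1}H_2)$ is consistent with the row-space structure of the stationary equation (so that the reduced equation holds for $\widetilde H_1$ itself, not merely after projection). A clean way to sidestep part of this is to invoke the I-divergence identity just proved: since $\ii(\widehat\Sigma\|HH^\top+D)=\ii(\widetilde\Sigma_{11}\|\widetilde H_1\widetilde H_1^\top+D_1)$ as functions of the parameters near the stationary point, stationarity of the left-hand side in $(H_1,D_1)$ (with $H_2$, $D_2=0$ frozen by Proposition~\ref{prop:d2}) transfers to stationarity of the right-hand side in $(\widetilde H_1,D_1)$, and the latter's stationary equations are~(\ref{stateqs}) for the reduced problem, i.e.\ exactly the displayed equations.
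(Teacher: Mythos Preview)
Your argument for the I-divergence identity is correct and is essentially the paper's approach seen from a slightly different angle: the paper applies the congruence $\Sigma\mapsto A\Sigma A^\top$ with $A=\begin{pmatrix}I & -\widehat\Sigma_{12}\widehat\Sigma_{22}^{-1}\\ 0 & I\end{pmatrix}$, which block-diagonalizes both $\widehat\Sigma$ and $HH^\top+D$ (the latter using Proposition~\ref{prop:d2}) with common lower block $\widehat\Sigma_{22}$, and then uses $\ii(APA^\top\|AQA^\top)=\ii(P\|Q)$. This is precisely the Schur-complement/conditioning reduction you describe. Your treatment of the $D_1$ equation is also fine and more explicit than the paper's ``simple computation''.

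The only point that needs care is your variational shortcut for the $\widetilde H_1$ equation. The identity $\ii(\widehat\Sigma\|HH^\top+D)=\ii(\widetilde\Sigma_{11}\|\widetilde H_1\widetilde H_1^\top+D_1)$ does \emph{not} hold ``as functions of the parameters near the stationary point'': it relies on $\widehat\Sigma_{12}=H_1H_2^\top$ from Proposition~\ref{prop:d2}, which constrains $H_1$ and fails once you perturb $H_1$ with $H_2$ frozen. What does hold in a neighborhood (with $D_2=0$ and $H_2$ fixed) is the full decomposition of Lemma~\ref{lemma:divdecomposition}, in which the extra terms $\ii(\widehat\Sigma_{22}\|H_2H_2^\top)$ and $\tfrac12\mathrm{tr}(\widehat\Sigma_{22}K^\top(\cdot)^{-1}K)$ are present; at the stationary point $K=0$, and since the trace term is quadratic in $K$ its gradient in $H_1$ vanishes there, which is what makes the transfer of stationarity legitimate. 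If you want to avoid this subtlety, your block-inverse computation (via Lemma~\ref{lemma:matrix}) is the straightforward route and matches what the paper has in mind.
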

\begin{proof}
One easily verifies that for any nonsingular matrix $A$ of the
appropriate size
$$\ii(APA^{\top }||AQA^{\top}) =\ii(P||Q).$$
Taking
\[
A=
\begin{pmatrix}
I & -\widehat{\Sigma}_{12}\widehat{\Sigma}_{22}^{-1} \\ 0 & I
\end{pmatrix},
\]
one finds
\[
A\widehat{\Sigma}A^{\top}=
\begin{pmatrix}
\widetilde{\Sigma}_{11} & 0 \\ 0 & \widehat{\Sigma}_{22}
\end{pmatrix}.
\]
Moreover, by Proposition~\ref{prop:d2},
\[
A(HH^{\top}+D)A^{\top}=
\begin{pmatrix}
H_{1}H_{1}+D_{1}-\widehat{\Sigma}_{12}\widehat{\Sigma}_{22}^{-1}\widehat{\Sigma}_{21} & 0 \\
0 & \widehat{\Sigma}_{22},
\end{pmatrix}.
\]
where the upper left block is equal to
$\widetilde{H}_{1}\widetilde{H}_{1}^{\top}+D_{1}$ in view of
equation~(\ref{HtildeHtildeS}). The first assertion follows. The
reduced stationary equations follow by simple computation.
\end{proof}
\begin{remark} Under the conditions of Proposition~\ref{prop:hd1},
the pair $(\widetilde{H}_{1},D_{1})$ is also a stationary point for
the minimization of
$\ii(\widetilde{\Sigma}_{11}||\widetilde{H}_{1}\widetilde{H}_{1}^{\top}+D_{1})$.
This is in full agreement with the results of
Section~\ref{sec:singularD}.
\end{remark}
%%%%%%%%%%%%%%%%%%%%%%%%%%%%%%%%%%%%%%%%%%%%%%%%%%%%%%%%%%%%%%%%%%%%%%%%%%%%%%
%
%{\bf On the stationary points of this algorithm:} Stationary points
%for $H$ also yields stationary points for $\ch$. In fact, one has
%that pairs $(\ch,D)$ satisfying $\ch=\widehat{\Sigma}(\ch+D)^{-1}\ch$
%and $D=\Delta(\widehat{\Sigma}-\ch)$ are invariant for the algorithm.

%%%%%%%%%%%%%%%%%%%%%%%%%%%%%%%%%%%%%%%%%%%%%%%%%%%%%%%%%%%%%%%%%%%%%%%%%%%

\subsection{Approximation with singular $D$}\label{sec:singularD}

In this section we consider the approximation Problem~\ref{problem2}
under the constraint $D_2=0$. \cite{joreskog}
investigated the solution of the likelihood equations~(\ref{eq:mld})
and~(\ref{eq:mlh2}) under zero constraints on $D$, whereas in this
section we work directly on the objective function of
Problem~\ref{problem2} without referring to those equations.
The constrained minimization problem can be formulated as
\begin{problem}\label{problem:dis0}
Given $\widehat{\Sigma}>0$ of size $n\times n$ and integers $n_2$
and $k$, with $n_2 \le k < n$, minimize
\begin{equation}
\ii(\widehat{\Sigma} || HH^\top+D),
\end{equation}
over $(H, D)$ with $D$ satisfying~(\ref{eq:dd1}).
\end{problem}
We will now decompose the objective function, choosing a convenient
representation of the matrix $H$, in order to reduce the complexity
of Problem~\ref{problem:dis0}. To that end we make the following
observation. Given any orthogonal matrix $Q$, define $H' = HQ$, then
clearly $H'H'^\top + D = HH^\top +D$. Let $H_{2}=U(0\,\,
\Lambda)V^{\top}$ be the singular value decomposition of $H_{2}$,
with $\Lambda$ a positive definite diagonal matrix of size
$n_{2}\times n_{2}$, and $U$ and $V$ orthogonal of sizes
$n_{2}\times n_{2}$ and $k\times k$ respectively. Let
$$H'= HV$$
The blocks of $H'$ are $H_{1}'  =H_{1}V$ and $H_{2}'=(H_{21}'\,
H_{22}'):=(0\quad U\Lambda)$, with $H_{21}'\in\R^{(k-n_2)\times
n_2}$ and $H'_{22}\in\R^{n_2\times n_2}$. Hence, without loss of
generality, in the remainder of this section we assume that
\begin{equation}
H=\begin{pmatrix} H_1 \\ H_2
\end{pmatrix} = \begin{pmatrix} H_{11} & H_{12} \\
0 & H_{22}
\end{pmatrix}\label{eq:h21}, \qquad H_{22} \,\, \mbox{invertible}.
\end{equation}
Finally, let
$$K=\widehat{\Sigma}_{12}\widehat{\Sigma}_{22}^{-1}-H_1H_2^\top(H_2H_2^\top)^{-1},$$
which, under~(\ref{eq:h21}), is equivalent to
$$K=\widehat{\Sigma}_{12}\widehat{\Sigma}_{22}^{-1}-H_{12}H_{22}^{-1}.$$
Here is the announced decomposition of the objective function.
\begin{lemma}\label{lemma:divdecomposition} Let $D$ be as in equation~(\ref{eq:dd1}).
The  following I-divergence decomposition holds.
\begin{align}\label{eq:divdivtr}
\ii(\widehat{\Sigma}||HH^{\top}+D) & = \ii(\widetilde{\Sigma}_{11}||H_{11}H_{11}^{\top}+
D_{1})+\ii(\widehat{\Sigma}_{22}||H_{22}H_{22}^{\top})\nonumber \\
& \quad \mbox{}+\half \mbox{\rm
tr}\big(\widehat{\Sigma}_{22}K^\top(H_{11}H_{11}^{\top}+D_{1})^{-1}K\big).
\end{align}
\end{lemma}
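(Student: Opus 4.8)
The plan is to reduce both covariance matrices to block‑diagonal form by congruence transformations that preserve the I-divergence, and then read off the three terms from the explicit formula~(\ref{eq:divsigma}). Throughout write $B=\widehat{\Sigma}_{12}\widehat{\Sigma}_{22}^{-1}$, $P_1 = H_{11}H_{11}^\top + D_1$ and $N = H_{22}H_{22}^\top$; note $P_1>0$ since $D_1>0$ and $N>0$ since $H_{22}$ is invertible by~(\ref{eq:h21}). Recall from the proof of Proposition~\ref{prop:hd1} that $\ii(A\Sigma A^\top\,||\,A\Sigma'A^\top)=\ii(\Sigma\,||\,\Sigma')$ for every nonsingular $A$ of matching size, which is immediate from~(\ref{eq:divsigma}).

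First I would apply this invariance with
\[
A_1=\begin{pmatrix} I & -B \\ 0 & I\end{pmatrix}.
\]
As in the proof of Proposition~\ref{prop:hd1}, $A_1\widehat{\Sigma}A_1^\top=\begin{pmatrix}\widetilde{\Sigma}_{11} & 0 \\ 0 & \widehat{\Sigma}_{22}\end{pmatrix}$. For the second matrix, using~(\ref{eq:h21}) one has $HH^\top+D=\begin{pmatrix} P_1+H_{12}H_{12}^\top & H_{12}H_{22}^\top \\ H_{22}H_{12}^\top & N\end{pmatrix}$, and a direct multiplication gives the $(1,2)$-block $(H_{12}-BH_{22})H_{22}^\top$ and the $(1,1)$-block $P_1+(H_{12}-BH_{22})(H_{12}-BH_{22})^\top$. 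The crucial simplification is $H_{12}-BH_{22}=-KH_{22}$, which follows from $KH_{22}=\widehat{\Sigma}_{12}\widehat{\Sigma}_{22}^{-1}H_{22}-H_{12}=BH_{22}-H_{12}$ (here one uses the form of $K$ valid under~(\ref{eq:h21})). Consequently
\[
A_1(HH^\top+D)A_1^\top=\begin{pmatrix} P_1+KNK^\top & -KN \\ -NK^\top & N\end{pmatrix}=A_2^{-1}\begin{pmatrix} P_1 & 0 \\ 0 & N\end{pmatrix}A_2^{-\top},\qquad A_2:=\begin{pmatrix} I & K \\ 0 & I\end{pmatrix},
\]
the last equality being checked by expanding the product.

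Next I would apply the invariance a second time, with $A_2$. It sends $A_1(HH^\top+D)A_1^\top$ to $\begin{pmatrix} P_1 & 0 \\ 0 & N\end{pmatrix}$ and sends $\begin{pmatrix}\widetilde{\Sigma}_{11} & 0 \\ 0 & \widehat{\Sigma}_{22}\end{pmatrix}$ to $\begin{pmatrix}\widetilde{\Sigma}_{11}+K\widehat{\Sigma}_{22}K^\top & K\widehat{\Sigma}_{22} \\ \widehat{\Sigma}_{22}K^\top & \widehat{\Sigma}_{22}\end{pmatrix}$, so that
\[
\ii(\widehat{\Sigma}\,||\,HH^\top+D)=\ii\!\left(\begin{pmatrix}\widetilde{\Sigma}_{11}+K\widehat{\Sigma}_{22}K^\top & K\widehat{\Sigma}_{22} \\ \widehat{\Sigma}_{22}K^\top & \widehat{\Sigma}_{22}\end{pmatrix}\,\Big\|\,\begin{pmatrix} P_1 & 0 \\ 0 & N\end{pmatrix}\right).
\]
Now plug this into~(\ref{eq:divsigma}) with $m=n_1+n_2$. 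The determinant of the left matrix is $|\widehat{\Sigma}_{22}|\,|\widetilde{\Sigma}_{11}|$, since its Schur complement with respect to the $\widehat{\Sigma}_{22}$-block equals $\widetilde{\Sigma}_{11}$; the determinant of the right matrix is $|P_1|\,|N|$; and, because the right matrix is block-diagonal with inverse $\begin{pmatrix} P_1^{-1} & 0 \\ 0 & N^{-1}\end{pmatrix}$, the trace term splits into $\tr(P_1^{-1}\widetilde{\Sigma}_{11})+\tr(P_1^{-1}K\widehat{\Sigma}_{22}K^\top)+\tr(N^{-1}\widehat{\Sigma}_{22})$. Grouping the $\log$, the $-m/2=-n_1/2-n_2/2$, and these trace contributions into a $P_1$-part, an $N$-part and a cross term, and using cyclicity of the trace on the last ($\tr(P_1^{-1}K\widehat{\Sigma}_{22}K^\top)=\tr(\widehat{\Sigma}_{22}K^\top P_1^{-1}K)$), one recovers exactly the three summands of~(\ref{eq:divdivtr}).

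The only real work is the congruence computation in the first step: checking that the off-block-diagonal part of $A_1(HH^\top+D)A_1^\top$ is governed entirely by $K$ and that the result factors as $A_2^{-1}\,\mathrm{diag}(P_1,N)\,A_2^{-\top}$. Once both block-diagonalizations are in hand, the remainder is routine determinant and trace bookkeeping in~(\ref{eq:divsigma}).
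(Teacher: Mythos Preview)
Your argument is correct. The two congruences $A_1$ and $A_2$ do exactly what you claim, the determinant and trace computations are right, and the three summands of~(\ref{eq:divdivtr}) drop out as stated.

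The paper, however, proceeds along a different, probabilistic route. It invokes the chain rule for I-divergence (Lemma~\ref{lemma:xcondy}), which gives $\ii(\widehat{\Sigma}\,||\,HH^\top+D)=\ii(\widehat{\Sigma}_{22}\,||\,H_2H_2^\top+D_2)+\mathbb{E}_{\pp_{Y_2}}\ii(\pp_{Y_1|Y_2}\,||\,\qq_{Y_1|Y_2})$, and then evaluates the expected conditional I-divergence using the Gaussian formula~(\ref{eq:divmu}) with nonzero means; the mean-difference quadratic form is precisely the source of the trace term. This yields the more general identity~(\ref{eq:ddt}) for arbitrary $D_2$, which is only afterwards specialized to $D_2=0$ and the triangular form~(\ref{eq:h21}). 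Your approach is purely matrix-algebraic and self-contained: it avoids Lemma~\ref{lemma:xcondy} and the mean-dependent formula~(\ref{eq:divmu}) entirely, at the price of doing the block-diagonalization by hand. The paper's route is conceptually cleaner and delivers the $D_2\neq 0$ version for free; yours is more elementary and keeps everything inside~(\ref{eq:divsigma}).
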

\begin{proof}
See Appendix~\ref{section:tech}.
\end{proof}
We are now ready to characterize the solution of
Problem~\ref{problem:dis0}.
\begin{proposition}\label{prop:mind2}
Any pair $(H, D)$, as in~(\ref{eq:dd1}) and~(\ref{eq:h21}), solving
Problem~\ref{problem:dis0} satisfies
$$
\begin{array}{ll} \bullet  &
\ii(\widetilde{\Sigma}_{11}||H_{11}H_{11}^{\top}+D_{1}) \quad
\mbox{is minimized}, \\
\bullet  &  H_{22}H_{22}^{\top}  = \widehat{\Sigma}_{22}, \\
\bullet  &  H_{12} =
\widehat{\Sigma}_{12}\widehat{\Sigma}_{22}^{-1}H_{22}.
\end{array}
$$
%\begin{itemize}
%\item $\ii(\widetilde{\Sigma}_{11}||H_{11}H_{11}^{\top}+D_{1})$ \rm{is
%minimized},
%\item $H_{22}H_{22}^{\top}  = \widehat{\Sigma}_{22},$
%\item $H_{12}  = \widehat{\Sigma}_{12}\widehat{\Sigma}_{22}^{-1}H_{22}.$
%\end{itemize}
\end{proposition}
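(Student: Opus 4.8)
The plan is to start from the I-divergence decomposition of Lemma~\ref{lemma:divdecomposition}, which, for any $(H,D)$ of the form~(\ref{eq:dd1}) and~(\ref{eq:h21}), writes the objective as a sum of three nonnegative terms:
\[
\ii(\widetilde{\Sigma}_{11}||H_{11}H_{11}^{\top}+D_{1})
+\ii(\widehat{\Sigma}_{22}||H_{22}H_{22}^{\top})
+\half\tr\big(\widehat{\Sigma}_{22}K^\top(H_{11}H_{11}^{\top}+D_{1})^{-1}K\big).
\]
The strategy is to argue that at a minimizer all three terms are simultaneously at their smallest possible values, and to read off the stated conditions from that. The key observation is that the three terms involve essentially different groups of parameters: the first depends only on $(H_{11},D_1)$, the second only on $H_{22}$ (through $H_{22}H_{22}^\top$), and the third couples them through $K$ and $H_{11}H_{11}^\top+D_1$.

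First I would handle the second term. Since $\ii(\widehat{\Sigma}_{22}||H_{22}H_{22}^\top)\ge 0$ with equality iff $H_{22}H_{22}^\top=\widehat{\Sigma}_{22}$, and since $\widehat{\Sigma}_{22}>0$ admits such an invertible factor $H_{22}$, this term can be driven to zero. Crucially, the choice of $H_{22}$ does not affect the first term at all, and it affects the third term only through $K=\widehat{\Sigma}_{12}\widehat{\Sigma}_{22}^{-1}-H_{12}H_{22}^{-1}$; but $H_{12}$ is still a free parameter, so whatever value of $H_{22}$ we fix, we can then choose $H_{12}:=\widehat{\Sigma}_{12}\widehat{\Sigma}_{22}^{-1}H_{22}$, which makes $K=0$ and kills the third term as well. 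With $K=0$ the third term vanishes regardless of $(H_{11},D_1)$, so the first term is then minimized independently over $(H_{11},D_1)$. This shows that the infimum of the objective over the constrained class equals $\min_{(H_{11},D_1)}\ii(\widetilde{\Sigma}_{11}||H_{11}H_{11}^\top+D_1)$, and that it is attained.

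Next I would argue the converse — that any minimizer must satisfy the three bulleted conditions. Suppose $(H,D)$ is optimal. The value of the objective is at least $\min_{(H_{11},D_1)}\ii(\widetilde{\Sigma}_{11}||H_{11}H_{11}^\top+D_1)$ (dropping the two nonnegative remaining terms), and by the previous paragraph this lower bound is achieved. Hence at the optimum the second and third terms must both be zero. Zero second term forces $H_{22}H_{22}^\top=\widehat{\Sigma}_{22}$. Given that, zero third term forces $\tr(\widehat{\Sigma}_{22}K^\top(H_{11}H_{11}^\top+D_1)^{-1}K)=0$; since $\widehat{\Sigma}_{22}>0$ and $H_{11}H_{11}^\top+D_1$ is positive definite (it must be, else the original I-divergence is infinite — this is where one invokes finiteness of the objective, equivalently strict positivity of $HH^\top+D$, which via the decomposition forces $H_{11}H_{11}^\top+D_1>0$), a sandwich/Cauchy–Schwarz argument on the trace gives $K=0$, i.e. $H_{12}=\widehat{\Sigma}_{12}\widehat{\Sigma}_{22}^{-1}H_{22}$ (using that $H_{22}$ is invertible). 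Finally, with the other two terms zero, the objective equals the first term, so optimality of $(H,D)$ forces $(H_{11},D_1)$ to minimize $\ii(\widetilde{\Sigma}_{11}||H_{11}H_{11}^\top+D_1)$.

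The main obstacle I anticipate is the rigorous step ``zero trace term $\Rightarrow K=0$'': one needs to note that $\tr(\widehat{\Sigma}_{22}K^\top M^{-1}K)$ with $\widehat{\Sigma}_{22}>0$ and $M:=H_{11}H_{11}^\top+D_1>0$ is a quadratic form that is positive definite in the entries of $K$ (write it as $\tr((M^{-1/2}K\widehat{\Sigma}_{22}^{1/2})^\top(M^{-1/2}K\widehat{\Sigma}_{22}^{1/2}))=\|M^{-1/2}K\widehat{\Sigma}_{22}^{1/2}\|^2$), so it vanishes iff $K=0$. A secondary technical point worth spelling out is why $M>0$ at a minimizer: this is immediate because an optimal $(H,D)$ has finite objective, finiteness of $\ii(\widehat{\Sigma}||HH^\top+D)$ is equivalent to $HH^\top+D>0$, and under the block structure~(\ref{eq:h21}) together with $H_{22}H_{22}^\top=\widehat{\Sigma}_{22}>0$ the Schur complement of the lower-right block of $HH^\top+D$ is exactly $M$, so $HH^\top+D>0$ forces $M>0$. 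Everything else is bookkeeping with the decomposition in Lemma~\ref{lemma:divdecomposition}.
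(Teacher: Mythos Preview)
Your argument is correct and follows the same route as the paper: both proofs rest on the decomposition of Lemma~\ref{lemma:divdecomposition}, observe that the second and third terms are nonnegative and can be driven to zero by the choices $H_{22}H_{22}^\top=\widehat{\Sigma}_{22}$ and $H_{12}=\widehat{\Sigma}_{12}\widehat{\Sigma}_{22}^{-1}H_{22}$, leaving only the first term to be minimized over $(H_{11},D_1)$. Your write-up is in fact more explicit than the paper's in justifying the converse direction (that any minimizer forces $K=0$ and $H_{22}H_{22}^\top=\widehat{\Sigma}_{22}$); one small simplification: for $M=H_{11}H_{11}^\top+D_1>0$ you only need invertibility of $H_{22}$, already assumed in~(\ref{eq:h21}), not the stronger $H_{22}H_{22}^\top=\widehat{\Sigma}_{22}$.
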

\begin{proof}
Observe first that the second and third terms on the right hand side
of~(\ref{eq:divdivtr}) are nonnegative and can be made zero. To this
end it is enough to select $H_{22}$ such that
$H_{22}H_{22}^{\top}=\widehat{\Sigma}_{22}$ and then
$H_{12}=\widehat{\Sigma}_{12}\widehat{\Sigma}_{22}^{-1}H_{22}$. The
remaining blocks, $H_{11}$ and $D_1$, are determined minimizing the
first term.
\end{proof}
\begin{remark}\label{remark:expl}
In the special case $n_{2}=k$, the matrices $H_{11}$ and $H_{21}$
are empty, $H_{12}=H_{1}$, and $H_{22}=H_{2}$. From
Proposition~\ref{prop:mind2}, at the minimum,
$H_{2}H_{2}^{\top}=\widehat{\Sigma}_{22}$,
$H_{1}H_2^\top=\widehat{\Sigma}_{12}$, and $D_{1}$ minimizes
$\ii(\widetilde{\Sigma}_{11}||D_{1})$. The latter problem has
solution $D_{1}=\Delta(\widetilde{\Sigma}_{11})$. It is remarkable
that in this case the minimization problem has an {\em explicit}
solution.
%The minimum divergence can also easily be calculated and
%becomes
%$\half(\sum_{j=1}^{n-k}\widetilde{\sigma}_{jj}-|\widetilde{\Sigma}_{11}|)$,
%where the $\widetilde{\sigma}_{jj}$ are the diagonal elements of
%$\widetilde{\Sigma}_{11}$.
\end{remark}
Proposition~\ref{prop:mind2} also sheds some light on the
unconstrained Problem~\ref{problem2}.

%, when its solution turns out to be of the form~(\ref{eq:dd1}).
%Define
%\[
%\widetilde{H}_1=H_1(I-H_2^\top(H_2H_2^\top)^{-1}H_2),
%\]
%then, under  assumption~(\ref{eq:h21}) one gets $\widetilde{H}_{1}=(H_{11}\,0)$.

\begin{corollary}\label{cor:sigmad0}
Assume that, in Problem~\ref{problem2},
$\ii(\widehat{\Sigma}||HH^{\top}+D)$ is minimized for a pair $(H,D)$
with $D$ of the form~(\ref{eq:dd1}). Then
$\widehat{\Sigma}_{12}=H_{1}H_{2}^{\top}$,
$\widehat{\Sigma}_{22}=H_{2}H_{2}^{\top}$, and
$(\widetilde{H}_{1},D_{1})$, where $\widetilde{H}_1$ is as
in~(\ref{Htilde}), minimizes
$\ii(\widetilde{\Sigma}_{11}||\widetilde{H}_{1}\widetilde{H}_{1}^{\top}+D_{1})$.
\end{corollary}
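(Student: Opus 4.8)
The plan is to recognize the hypothesis as placing us inside the \emph{constrained} Problem~\ref{problem:dis0} and then to quote Proposition~\ref{prop:mind2}. First I would note that a pair $(H,D)$ minimizing the unconstrained Problem~\ref{problem2} whose $D$ happens to have the block form~(\ref{eq:dd1}) is feasible for Problem~\ref{problem:dis0} with $n_2$ equal to the number of zeros on the diagonal of $D$; since the feasible set of Problem~\ref{problem:dis0} is contained in that of Problem~\ref{problem2}, such a pair is automatically a global minimizer of Problem~\ref{problem:dis0} as well. Next, exactly as in the reduction preceding Lemma~\ref{lemma:divdecomposition}, I would replace $H$ by $HV$, with $V$ the orthogonal matrix from the singular value decomposition of $H_2$; this changes neither the objective nor $HH^\top$, $D$, $H_1H_2^\top$, $H_2H_2^\top$, nor $\widetilde{H}_1\widetilde{H}_1^\top=H_1(I-H_2^\top(H_2H_2^\top)^{-1}H_2)H_1^\top$, so we may assume that $H$ has the form~(\ref{eq:h21}), i.e.\ $H_1=(H_{11}\ H_{12})$ and $H_2=(0\ H_{22})$ with $H_{22}$ invertible.

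Applying Proposition~\ref{prop:mind2} to this minimizer gives $H_{22}H_{22}^\top=\widehat{\Sigma}_{22}$, $H_{12}=\widehat{\Sigma}_{12}\widehat{\Sigma}_{22}^{-1}H_{22}$, and the minimality of $\ii(\widetilde{\Sigma}_{11}||H_{11}H_{11}^\top+D_1)$ (which therefore equals the optimal value of the $(k-n_2)$-factor approximate FA problem for $\widetilde{\Sigma}_{11}$). From the first two relations I would read off $H_2H_2^\top=H_{22}H_{22}^\top=\widehat{\Sigma}_{22}$ and $H_1H_2^\top=H_{12}H_{22}^\top=\widehat{\Sigma}_{12}\widehat{\Sigma}_{22}^{-1}H_{22}H_{22}^\top=\widehat{\Sigma}_{12}$; these are the first two claimed identities, and since both sides are unaltered by $H\mapsto HV$ they hold for the original $H$ as well.

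Finally, now that $\widehat{\Sigma}_{12}=H_1H_2^\top$ and $\widehat{\Sigma}_{22}=H_2H_2^\top$ have been established, identity~(\ref{HtildeHtildeS}) applies and gives $\widetilde{H}_1\widetilde{H}_1^\top=H_1H_1^\top-\widehat{\Sigma}_{12}\widehat{\Sigma}_{22}^{-1}\widehat{\Sigma}_{21}$. Inserting the block form~(\ref{eq:h21}) and using that $H_{22}$ is square and invertible, so that $H_{22}^\top(H_{22}H_{22}^\top)^{-1}H_{22}=I$, the right-hand side reduces to $H_{11}H_{11}^\top+H_{12}H_{12}^\top-H_{12}H_{12}^\top=H_{11}H_{11}^\top$. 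Hence $\ii(\widetilde{\Sigma}_{11}||\widetilde{H}_1\widetilde{H}_1^\top+D_1)=\ii(\widetilde{\Sigma}_{11}||H_{11}H_{11}^\top+D_1)$, which is minimal by Proposition~\ref{prop:mind2}, and this is the third assertion. I do not expect a real obstacle: the only points that need care are the passage from Problem~\ref{problem2} to the more constrained Problem~\ref{problem:dis0}, the verification that the reduction $H\mapsto HV$ leaves every quantity in the statement invariant, and spelling out in what precise sense $(\widetilde{H}_1,D_1)$ is a minimizer, namely that $\widetilde{H}_1\widetilde{H}_1^\top+D_1$ attains the optimal I-divergence among $(k-n_2)$-factor approximate FA models of $\widetilde{\Sigma}_{11}$.
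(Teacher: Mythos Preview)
Your argument is correct and follows essentially the same route as the paper: reduce to Problem~\ref{problem:dis0}, invoke Proposition~\ref{prop:mind2}, and identify $\widetilde{H}_1\widetilde{H}_1^\top$ with $H_{11}H_{11}^\top$. The only cosmetic difference is that the paper obtains this last identification directly from $\widetilde{H}_1=(H_{11}\ 0)$ (an immediate consequence of the block form~(\ref{eq:h21}) and $H_{22}$ invertible), whereas you reach it via~(\ref{HtildeHtildeS}); both are fine, and your explicit check that the passage $H\mapsto HV$ leaves all the claimed quantities invariant is a welcome bit of care.
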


\begin{proof}
It is obvious that, in this case, Problem~\ref{problem2} and
Problem~\ref{problem:dis0} are equivalent. The result readily
follows from Proposition~\ref{prop:mind2}, in view of the equality
$\widetilde{H}_{1}=(H_{11}\,0)$.
\end{proof}
Hence, in Problem~\ref{problem2}, a singular minimizer $D$ can occur
only if $\widehat{\Sigma}$ has the special structure described in
Corollary~\ref{cor:sigmad0}.

%CHECK CHECK We put forward the following \footnote{This formulation
%is nonsense, but a correct one is more than welcome. How to
%recognize a matrix $\widehat\Sigma$ that is such that a minimizing
%$D$ contains zeros? It has to do with taking a component of $Y$ as
%an internal factor. Lorenzo?}
%
%\begin{proposition}
%Suppose that $\widehat{\Sigma}$ is such that there are $\bar{H}_{1}$
%and $\bar{H}_{2}$ with $\bar{H}_{1}\bar{H}_{2}^{\top}=\Sigma_{12}$
%and $\bar{H}_{2}\bar{H}_{2}^{\top}=\Sigma_{22}$. Then a minimizing
%pair $(H,D)$ is also such that $H_{1}H_{2}^{\top}=\Sigma_{12}$,
%$H_{2}H_{2}^{\top}=\Sigma_{22}$ and moreover, $D_{2}=0$.
%\end{proposition}

In the same spirit one can characterize the covariances $\widehat \Sigma$,
admitting an exact FA model of size $k \ge n_2$, and with
$D_2=0$. This happens if and only if, with the notations of Section~\ref{section:statpoints},
\begin{equation} \label{exactFA}
\widehat \Sigma_{11} - \widehat \Sigma_{12} \widehat\Sigma_{22}^{-1} \widehat\Sigma_{21} \quad \mbox{is a diagonal matrix.}
\end{equation}
This condition is easily interpreted in terms of random vectors. Let
$Y$ be an $n$ dimensional, zero mean, normal vector with
$\cov(Y)=\widehat \Sigma$ and partition it into two subvectors
$(Y_1, Y_2)$, of respective sizes $n_1$ and $n_2$, corresponding to
the block partitioning of $\widehat \Sigma$. The above condition
states that the components of $Y_1$ are conditionally independent
given $Y_2$. The construction of the $k$-dimensional, exact FA
model, with $D_2=0$ is as follows.

%An exact factor model (with $X$ $k$-dimensional) is possible to
%construct, if $Y$ contains a $n_2$-dimensional subvector $Y_2$
%(with $n_2\leq k$) that makes the components   of the remaining
%$n_1$-dimensional subvector $Y_1$ conditionally independent
%($n_1=n-n_2$). The construction of the model is as follows.

Let $D_1=\Sigma_{11}-\Sigma_{12}\Sigma_{22}^{-1}\Sigma_{21}$, which
is a diagonal matrix by assumption. Let $R$ be the symmetric,
invertible, square root of $\Sigma_{22}$. Define the matrices
\begin{align*}
H_1 & = \Sigma_{12}\begin{pmatrix} R^{-1} & 0\end{pmatrix}\in \openR^{n_1\times k} \\
H_2 & =\begin{pmatrix} R & 0
\end{pmatrix}\in\openR^{n_2\times k}.
\end{align*}
One verifies the identities $H_2H_2^\top=\Sigma_{22}$,
$H_1H_2^\top=\Sigma_{12}$ and
$H_1H_1^\top=\Sigma_{12}\Sigma_{22}^{-1}\Sigma_{21}$. It follows
that $\Sigma_{11}=D_1+H_1H_1^\top$. Let $Z$ be a
$(k-n_2)$-dimensional random vector, independent of $Y$, with zero
mean and identity covariance matrix. Put
\[
X=\begin{pmatrix} R^{-1}Y_2 \\Z
\end{pmatrix}.
\]
Then $\cov(X)=I_k$. Furthermore, $\eps_1:=Y_1-H_1X$ is independent
of $X$ with $\cov(\eps_1)=D_1$, and $Y_2-H_2X=0$. It follows that
\begin{align*}
Y_1 & = H_1 X+\eps_1 \\
Y_2 & = H_2 X
\end{align*}
is an exact realization of $Y$ in terms of a factor model.

\subsection{Algorithm when a part of $D$ has zero diagonal}
\label{sec:algo0}

In Section~\ref{sec:singularD} we have posed the minimization
problem under the additional constraint that the matrix $D$ contains
a number of zeros on the diagonal. In the present section we
investigate how this constraint affects the alternating minimization
algorithm. For simplicity we give a detailed account of this, only
using the recursion~(\ref{eq:hh2}) for $\ch_t$. Initialize the
algorithm at $(H_0, D_0)$ with
\begin{equation}\label{eq:d10} D_0 = \begin{pmatrix} \widetilde{D} &
0 \\ 0 & 0
\end{pmatrix},
\end{equation}
where $\widetilde{D}>0$ is of size $n_1\times n_1$ and
\begin{equation}\label{eq:h12}
H_0=\begin{pmatrix} H_1 \\H_2
\end{pmatrix},
\end{equation}
where $H_2\in\R^{n_2\times k}$ is assumed to have full row rank, so
that $n_2\leq k$ (note the slight ambiguity in the notation for the
blocks of $H_0$). Clearly $H_0H_0^\top + D_0$ is invertible. For
$H_0$ as in equation~(\ref{eq:h12}) put
\begin{equation}\label{eq:hhtilde}
\widetilde{\ch}= H_1(I-H_2^\top(H_2H_2^\top)^{-1}H_2)H_1^\top.
\end{equation}
We have the following result.

\begin{lemma}\label{prop:hh11}
Let $(H_0, D_0)$ be given as above, and $\ch_0=H_0H_0^\top$.
Applying one step of recursion~(\ref{eq:hh2}), one gets
\begin{align}
\ch_1 & = \begin{pmatrix} \ch^{11} & \widehat{\Sigma}_{12} \\
\widehat{\Sigma}_{21} & \widehat{\Sigma}_{22}
\end{pmatrix},\label{eq:hh11}
\end{align}
where
\begin{equation}
\ch^{11} =
\widetilde{\Sigma}_{11}(\widetilde{\ch}+
\widetilde{D})^{-1}\widetilde{\ch}(\widetilde{D}+\widetilde{\Sigma}_{11}
(\widetilde{\ch}+\widetilde{D})^{-1}\widetilde{\ch})^{-1}\widetilde{\Sigma}_{11}
+\widehat{\Sigma}_{12}\widehat{\Sigma}_{22}^{-1}\widehat{\Sigma}_{21}.
\end{equation}
and
\begin{align}
D_1 & = \begin{pmatrix} \Delta(\widetilde{\Sigma}_{11}-\widetilde{\ch}) & 0 \\
0 & 0
\end{pmatrix}.
\end{align}
\end{lemma}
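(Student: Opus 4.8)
The plan is to apply the recursion~(\ref{eq:hh2}) directly to the block-structured matrices $\ch_0 = H_0H_0^\top$ and $D_0$, and to exploit the vanishing of the lower-right block of $D_0$. First I would compute $\ch_0 + D_0$ using the partition $H_0 = \binom{H_1}{H_2}$ from~(\ref{eq:h12}): it has blocks $H_1H_1^\top + \widetilde D$, $H_1H_2^\top$, $H_2H_1^\top$, and $H_2H_2^\top$, with $H_2H_2^\top$ invertible by the full row rank assumption. The key algebraic device, mirroring the proof of Proposition~\ref{prop:hd1}, is to conjugate by the block matrix
\[
A = \begin{pmatrix} I & -H_1H_2^\top(H_2H_2^\top)^{-1} \\ 0 & I \end{pmatrix},
\]
which block-diagonalizes both $\ch_0 + D_0$ (into $\widetilde\ch + \widetilde D$ and $H_2H_2^\top$, using~(\ref{eq:hhtilde})) and $\ch_0$ itself (into $\widetilde\ch$ and $H_2H_2^\top$). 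I would do the same with $\widehat\Sigma$: since $A\widehat\Sigma A^\top = \mathrm{diag}(\widetilde\Sigma_{11}, \widehat\Sigma_{22})$ by~(\ref{Sigma_tilde}), the whole right-hand side of~(\ref{eq:hh2}) conjugates nicely provided I am careful that the recursion is \emph{not} itself conjugation-covariant — it is a rational map in $\ch_t$, $D_t$, $\widehat\Sigma$, so I need to track how $A(\cdot)A^\top$ passes through each factor.

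Concretely, writing $\ch_1 = \widehat\Sigma(\ch_0+D_0)^{-1}\ch_0\big(D_0 + \widehat\Sigma(\ch_0+D_0)^{-1}\ch_0\big)^{-1}\widehat\Sigma$, I would insert $A^{-1}A$ between consecutive factors and use $A D_0 A^\top$... but here lies the subtlety: $A$ does \emph{not} commute with the non-symmetric products, so the cleaner route is to compute $A\ch_1 A^\top$ by noting $A\widehat\Sigma = (A\widehat\Sigma A^\top)A^{-\top}$ and similarly for the other factors, so that all the $A^{\pm 1}$, $A^{\pm\top}$ telescope except for one leftover $A^{-1}$ on the right end — which must cancel because of the block-diagonal structure. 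After the dust settles, the $(2,2)$ block of $A\ch_1 A^\top$ should reduce to $H_2H_2^\top$-type expressions that collapse to $\widehat\Sigma_{22}$, the $(1,2)$ and $(2,1)$ blocks vanish, and the $(1,1)$ block becomes exactly the scalar recursion~(\ref{eq:hh2}) applied to $\widetilde\ch$, $\widetilde D$, $\widetilde\Sigma_{11}$, namely $\widetilde\Sigma_{11}(\widetilde\ch+\widetilde D)^{-1}\widetilde\ch(\widetilde D+\widetilde\Sigma_{11}(\widetilde\ch+\widetilde D)^{-1}\widetilde\ch)^{-1}\widetilde\Sigma_{11}$. Conjugating back by $A^{-1}$ then reintroduces the cross terms $\widehat\Sigma_{12}$, $\widehat\Sigma_{21}$ and adds the $\widehat\Sigma_{12}\widehat\Sigma_{22}^{-1}\widehat\Sigma_{21}$ correction to the $(1,1)$ block, yielding~(\ref{eq:hh11}) and the claimed formula for $\ch^{11}$. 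Finally $D_1 = \Delta(\widehat\Sigma - \ch_1)$ from~(\ref{eq:d}): the lower-right block of $\widehat\Sigma - \ch_1$ is $\widehat\Sigma_{22} - \widehat\Sigma_{22} = 0$, so $D_1$ retains the form~(\ref{eq:d10}) with upper block $\Delta(\widetilde\Sigma_{11} - \widetilde\ch)$, using that the diagonal of $\ch^{11}$ minus the diagonal of $\widehat\Sigma_{12}\widehat\Sigma_{22}^{-1}\widehat\Sigma_{21}$ matches the diagonal of the $\widetilde\ch$-recursion output.

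I expect the main obstacle to be the bookkeeping of the conjugation through the non-symmetric middle factors of~(\ref{eq:hh2}): unlike the situation in Proposition~\ref{prop:hd1}, where $\ii(APA^\top\|AQA^\top) = \ii(P\|Q)$ handles everything in one line, here the recursion is a genuine matrix rational function and I must verify that the leftover conjugating factors genuinely telescope and that no spurious block-off-diagonal terms survive before reconjugation. A clean way to sidestep part of this is to first prove an auxiliary lemma: if $M_0 = A^{-1}\,\mathrm{diag}(\widetilde M,\,H_2H_2^\top)\,A^{-\top}$ and $\widehat\Sigma = A^{-1}\mathrm{diag}(\widetilde\Sigma_{11},\widehat\Sigma_{22})A^{-\top}$ with matching lower-right blocks, then one step of~(\ref{eq:hh2}) preserves this block-triangular-conjugate form and acts as the scalar recursion on the $(1,1)$ slot — this isolates the only real computation and makes the rest automatic. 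The diagonal-operator step $\Delta(\cdot)$ at the end is then routine, since $A$ is unit-upper-triangular so $A^{-1}\mathrm{diag}(X,Y)A^{-\top}$ has the same diagonal in the lower-right block as $Y$ and a computable diagonal in the upper-left block.
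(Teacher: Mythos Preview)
Your conjugation idea is attractive but contains a genuine gap: the single matrix $A=\begin{pmatrix} I & -H_1H_2^\top(H_2H_2^\top)^{-1}\\ 0 & I\end{pmatrix}$ does \emph{not} block-diagonalize $\widehat\Sigma$. The identity $A\widehat\Sigma A^\top=\mathrm{diag}(\widetilde\Sigma_{11},\widehat\Sigma_{22})$ holds only when the off-diagonal corner of $A$ equals $\widehat\Sigma_{12}\widehat\Sigma_{22}^{-1}$, i.e.\ precisely when $H_1H_2^\top(H_2H_2^\top)^{-1}=\widehat\Sigma_{12}\widehat\Sigma_{22}^{-1}$. That is the stationary-point relation of Proposition~\ref{prop:d2}, not something available at a generic initial $(H_0,D_0)$. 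Your auxiliary lemma makes the same hidden assumption. Since no single block-triangular $A$ simultaneously diagonalizes $\ch_0$, $\ch_0+D_0$, and $\widehat\Sigma$, the promised telescoping cannot occur as described, and the off-diagonal blocks of $A\ch_1 A^\top$ will not vanish.

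The paper avoids this obstruction by a direct block computation rather than conjugation. It first verifies that
\[
(\ch_0+D_0)^{-1}\ch_0=\begin{pmatrix}(\widetilde\ch+\widetilde D)^{-1}\widetilde\ch & 0\\ \ast & I\end{pmatrix},
\]
a block lower-triangular matrix with identity in the $(2,2)$ slot, and then shows that $\big(D_0+\widehat\Sigma(\ch_0+D_0)^{-1}\ch_0\big)^{-1}\widehat\Sigma$ has the same lower-triangular shape with $(1,1)$ block $U^{-1}\widetilde\Sigma_{11}$ where $U=\widetilde D+\widetilde\Sigma_{11}(\widetilde\ch+\widetilde D)^{-1}\widetilde\ch$. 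Multiplying $\widehat\Sigma$ on the left by the product of these two matrices then forces the bottom row of $\ch_1$ to equal $(\widehat\Sigma_{21},\widehat\Sigma_{22})$ automatically, and the $(1,1)$ block falls out as claimed. The essential point is that the special structure here is a block lower-triangular form with $I$ in the $(2,2)$ corner, which survives multiplication on the left by an arbitrary $\widehat\Sigma$; your conjugation approach instead needs $\widehat\Sigma$ to share the block-diagonalization, which it does not.
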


\begin{proof}
We start from Equation~(\ref{eq:hh2}) with $t=0$ and compute the
value of $\ch_1$. To that end we first obtain under the present
assumption an expression for the matrix $(\ch+D_0)^{-1}\ch$. Let
$P=I-H_2^\top(H_2H_2^\top)^{-1}H_2$. It holds that
\begin{equation}\label{eq:hdh}
(\ch+D_0)^{-1}\ch=
\begin{pmatrix}
(\widetilde{D} +H_1PH_1^\top)^{-1}H_1PH_1^\top & 0 \\
(H_2H_2^\top)^{-1}H_2H_1^\top(\widetilde{D}
+H_1PH_1^\top)^{-1}\widetilde{D} & I
\end{pmatrix},
\end{equation}
as one can easily verify by multiplying this equation by $\ch+D_0$.
We also need the inverse of $D_0+\widehat{\Sigma}(\ch+D_0)^{-1}\ch$,
postmultiplied with $\widehat{\Sigma}$. Introduce
$U=\widetilde{D}+\widetilde{\Sigma}_{11}(H_1PH_1^\top+\widetilde{D})^{-1}H_1PH_1^\top$
and
\[
V=\widehat{\Sigma}_{22}^{-1}\widehat{\Sigma}_{21}(H_1PH_1^\top +
\widetilde{D})^{-1}+(H_2H_2^\top)^{-1}H_2H_1^\top(H_2H_2^\top)^{-1}\widetilde{D}.
\]
It results that
\begin{equation}\label{eq:uv}
\big(D_0+\widehat{\Sigma}(\ch+D_0)^{-1}\ch\big)^{-1}\widehat{\Sigma}=
\begin{pmatrix}
U^{-1}\widetilde{\Sigma}_{11} & 0 \\
-VU^{-1}\widetilde{\Sigma}_{11}+\widehat{\Sigma}_{22}^{-1}\widehat{\Sigma}_{21}
& I
\end{pmatrix}.
\end{equation}
Insertion of the expressions~(\ref{eq:hdh}) and~(\ref{eq:uv})
into~(\ref{eq:hh2}) yields the result.
\end{proof}
The update equations of the algorithm for $\ch_t$ and $D_t$, can be
readily derived from Lemma~\ref{prop:hh11} and are summarized below.
\begin{proposition} \label{algo:singD}
The upper left block $\ch^{11}_t$ of $\ch_t$, can be computed
running a recursion for $\widetilde{\ch}_t :=
\ch^{11}_t-\widehat{\Sigma}_{12}\widehat{\Sigma}_{22}^{-1}\widehat{\Sigma}_{21}$,
\[
\widetilde{\ch}_{t+1}=
\widetilde{\Sigma}_{11}(\widetilde{\ch}_t+\widetilde{D}_t)^{-1}\widetilde{\ch}_t(\widetilde{D}_t+
\widetilde{\Sigma}_{11}(\widetilde{\ch}_t+\widetilde{D}_t)^{-1}\widetilde{\ch}_t)^{-1}\widetilde{\Sigma}_{11},
\]
whereas the blocks on the border of $\ch_t$ remain constant. The
iterates for $D_t$ all have a lower right block of zeros, while the
upper left $n_1 \times n_1$ block $\widetilde{D}_t$ satisfies
$$
\widetilde{D}_{t} = \Delta(\widetilde{\Sigma}_{11} -
\widetilde{\ch}_{t}).
$$
\mbox{}\hfill $\square$
\end{proposition}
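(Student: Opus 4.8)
The plan is to run an induction on $t$, using Lemma~\ref{prop:hh11} both for the base step and, after a mild reinterpretation, for the induction step. The inductive claim is that for every $t\ge1$ the matrix $\ch_t$ has the block form~(\ref{eq:hh11}) — off-diagonal and lower-right blocks frozen at $\widehat{\Sigma}_{12},\widehat{\Sigma}_{21},\widehat{\Sigma}_{22}$ and upper-left block $\ch^{11}_t=\widetilde{\ch}_t+\widehat{\Sigma}_{12}\widehat{\Sigma}_{22}^{-1}\widehat{\Sigma}_{21}$ — while $D_t=\mathrm{diag}(\widetilde{D}_t,0)$ with $\widetilde{D}_t=\Delta(\widetilde{\Sigma}_{11}-\widetilde{\ch}_t)$ and $\widetilde{\Sigma}_{11}$ as in~(\ref{Sigma_tilde}). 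For $t=1$ this is exactly Lemma~\ref{prop:hh11} applied to the initial data $(H_0,D_0)$, $\ch_0=H_0H_0^\top$ of the form~(\ref{eq:d10})--(\ref{eq:h12}): it produces $\ch_1$ in the block form~(\ref{eq:hh11}), and since the algorithm sets $D_t=\Delta(\widehat{\Sigma}-\ch_t)$ (cf.\ (\ref{eq:d})), the vanishing lower-right block of $\widehat{\Sigma}-\ch_1$ together with the identity $\widehat{\Sigma}_{11}-\ch^{11}_1=\widetilde{\Sigma}_{11}-\widetilde{\ch}_1$ gives the stated form of $D_1$.

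For the induction step I would show that $(\ch_t,D_t)$, $t\ge1$, can itself serve as input to Lemma~\ref{prop:hh11}. Since the lower-right block $\widehat{\Sigma}_{22}$ of $\ch_t$ is strictly positive definite of size $n_2$, and recursion~(\ref{eq:hh2}) preserves the property $\ch_t=GG^\top$ (see~(\ref{eq:hhr})), one can factor $\ch_t=G_tG_t^\top$ with lower block $G_{t,2}$ of full row rank $n_2$; moreover the quantity~(\ref{eq:hhtilde}) formed from $G_t$ equals $\ch^{11}_t-\widehat{\Sigma}_{12}\widehat{\Sigma}_{22}^{-1}\widehat{\Sigma}_{21}=\widetilde{\ch}_t$, because $G_{t,2}G_{t,2}^\top=\widehat{\Sigma}_{22}$ and $G_{t,1}G_{t,2}^\top=\widehat{\Sigma}_{12}$. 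Hence $(G_t,D_t)$ has the form~(\ref{eq:d10})--(\ref{eq:h12}) with $\widetilde{D}_t$ in place of $\widetilde{D}$, and Lemma~\ref{prop:hh11} applies verbatim: it returns $\ch_{t+1}$ with the same frozen border blocks and with upper-left block $\widetilde{\Sigma}_{11}(\widetilde{\ch}_t+\widetilde{D}_t)^{-1}\widetilde{\ch}_t\big(\widetilde{D}_t+\widetilde{\Sigma}_{11}(\widetilde{\ch}_t+\widetilde{D}_t)^{-1}\widetilde{\ch}_t\big)^{-1}\widetilde{\Sigma}_{11}+\widehat{\Sigma}_{12}\widehat{\Sigma}_{22}^{-1}\widehat{\Sigma}_{21}$. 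Subtracting $\widehat{\Sigma}_{12}\widehat{\Sigma}_{22}^{-1}\widehat{\Sigma}_{21}$ yields the asserted recursion for $\widetilde{\ch}_{t+1}$, and the update for $D_{t+1}$ follows just as in the base step. It is worth recording that this recursion for $\widetilde{\ch}_t$, together with $\widetilde{D}_t=\Delta(\widetilde{\Sigma}_{11}-\widetilde{\ch}_t)$, is literally the recursion of Proposition~\ref{prop:ch} for the Schur complement $\widetilde{\Sigma}_{11}$ in place of $\widehat{\Sigma}$ — in agreement with Corollary~\ref{cor:sigmad0} and the remark following Proposition~\ref{prop:hd1}.

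The step I expect to be the main obstacle is checking that the hypotheses of Lemma~\ref{prop:hh11} are genuinely regenerated at each iteration, in particular that $\widetilde{D}_t>0$ and that $\widetilde{\ch}_t+\widetilde{D}_t$ and $\widetilde{D}_t+\widetilde{\Sigma}_{11}(\widetilde{\ch}_t+\widetilde{D}_t)^{-1}\widetilde{\ch}_t$ remain invertible. These are precisely the invertibility facts for recursion~(\ref{eq:hh2}) / Algorithm~\ref{algo1} run with the strictly positive definite $\widetilde{\Sigma}_{11}$ instead of $\widehat{\Sigma}$, so they are inherited from Proposition~\ref{prop:properties}(b)--(c) under the initialization $\widetilde{D}_0=\widetilde{D}>0$ (part~(b) also asks for $\Delta(\widetilde{\Sigma}_{11}-\widetilde{D})>0$, a condition one should add if strict positivity of $\widetilde{D}_t$ is wanted throughout; for bare invertibility of $\widetilde{\ch}_t+\widetilde{D}_t$, using $\widetilde{\ch}_t\ge0$ and $\widetilde{D}_t>0$ suffices). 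Once these are in hand the induction closes, and the freezing of the border blocks of $\ch_t$ and of the lower-right block of $D_t$, together with the reduced recursions for $\widetilde{\ch}_t$ and $\widetilde{D}_t$, is exactly the content of the proposition.
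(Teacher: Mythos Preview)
Your proposal is correct and follows essentially the same approach as the paper, which simply states that the update equations ``can be readily derived from Lemma~\ref{prop:hh11}'' and gives no further argument. You have merely filled in the induction that the paper leaves implicit: the base step is Lemma~\ref{prop:hh11} verbatim, and for the inductive step you refactor $\ch_t=G_tG_t^\top$ and observe that the quantity~(\ref{eq:hhtilde}) formed from $G_t$ depends only on the blocks of $\ch_t$, hence equals $\widetilde{\ch}_t$, so the lemma applies again---this is exactly what ``readily derived'' means here.
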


%Specifically,
%$\ch^{11}-\widehat{\Sigma}_{12}\widehat{\Sigma}_{22}^{-1}\widehat{\Sigma}_{21}$
%is exactly the matrix that appears in the  minimization problem with
%a singular $D$-matrix as we have discussed in
%Section~\ref{sec:singularD}.
%Define the matrix
%$$
%then, by~(\ref{eq:hh11}), we obtain
%
% If we compute from $\ch_1$ as given
%by the matrix $\ch_{11}-\ch_{12}\ch_{22}^{-1}\ch_{21}$, we obtain
%$\ch^{11}-\widehat{\Sigma}_{12}\widehat{\Sigma}_{22}^{-1}\widehat{\Sigma}_{21}$,
%which would be the updated value of $\widetilde{\ch}$. Therefore,
%Under the conditions of Proposition~\ref{prop:hh11}, this results in
%the following recursion for $\widetilde{\ch}$ .
\noindent Note that the recursions of Proposition~\ref{algo:singD}
are exactly those that follow from the optimization
Problem~\ref{problem:dis0}. Comparison with~(\ref{eq:hh2}), shows
that, while the algorithm for the unconstrained case updates $\ch_t$
of size $n\times n$, now one needs to update $\widetilde{\ch}_t$
which is of smaller size $n_1\times n_1$.
\medskip\\
Now we specialize the above to the case in which $n_2=k$.

\begin{corollary}
Let the initial value $D_0$ be as in Equation~(\ref{eq:d10}) with
$n_2=k$. Then for any initial value $\ch_0$ the algorithm converges
{\em in one step} and one has that the first iterates $D_1$ and
$\ch_1$,  which are equal to the terminal values, are given by
\begin{align*}
D_1 & = \begin{pmatrix} \Delta(\widetilde{\Sigma}_{11}) & 0 \\ 0 & 0
\end{pmatrix} \\
\ch_1 & = \begin{pmatrix}
\widehat{\Sigma}_{12}\widehat{\Sigma}_{22}^{-1}\widehat{\Sigma}_{21}
& \widehat{\Sigma}_{12} \\ \widehat{\Sigma}_{21} &
\widehat{\Sigma}_{22}
\end{pmatrix}.
\end{align*}
\end{corollary}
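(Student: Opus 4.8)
The plan is to specialize Proposition~\ref{algo:singD} to $n_2 = k$ and observe that in this case the matrix $\widetilde{\ch}_t$ degenerates. When $n_2 = k$, the block $H_2 \in \R^{n_2 \times k}$ is square and, being of full row rank, invertible. Consequently the projection $P = I - H_2^\top(H_2H_2^\top)^{-1}H_2$ is the zero matrix, so $\widetilde{\ch} = H_1 P H_1^\top = 0$ by~(\ref{eq:hhtilde}). Thus already $\widetilde{\ch}_0 = 0$, regardless of the choice of $\ch_0$ (equivalently, of the choice of $H_1$); this is the point where the arbitrariness of the initial value gets washed out.

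Next I would feed $\widetilde{\ch}_t = 0$ into the recursion of Proposition~\ref{algo:singD}. Plugging $\widetilde{\ch}_t = 0$ into
$$\widetilde{\ch}_{t+1} = \widetilde{\Sigma}_{11}(\widetilde{\ch}_t+\widetilde{D}_t)^{-1}\widetilde{\ch}_t(\widetilde{D}_t+\widetilde{\Sigma}_{11}(\widetilde{\ch}_t+\widetilde{D}_t)^{-1}\widetilde{\ch}_t)^{-1}\widetilde{\Sigma}_{11}$$
gives $\widetilde{\ch}_{t+1} = 0$ because of the factor $\widetilde{\ch}_t$ sitting in the middle. Hence $\widetilde{\ch}_t = 0$ for all $t \ge 0$, so the recursion is stationary after zero steps, and in particular the terminal value of $\widetilde{\ch}$ is $0$. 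The diagonal update then reads $\widetilde{D}_t = \Delta(\widetilde{\Sigma}_{11} - \widetilde{\ch}_t) = \Delta(\widetilde{\Sigma}_{11})$ for all $t \ge 1$, which is the claimed expression for the upper-left block of $D_1$. (The lower-right block of $D_t$ is zero for all $t$ by Proposition~\ref{algo:singD}, so $D_1$ has the stated form.)

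Finally I would reconstruct $\ch_1$ from $\widetilde{\ch}_1 = 0$. By the definition $\widetilde{\ch}_t = \ch^{11}_t - \widehat{\Sigma}_{12}\widehat{\Sigma}_{22}^{-1}\widehat{\Sigma}_{21}$ from Proposition~\ref{algo:singD}, the upper-left block of $\ch_1$ is $\ch^{11}_1 = \widehat{\Sigma}_{12}\widehat{\Sigma}_{22}^{-1}\widehat{\Sigma}_{21}$, while Lemma~\ref{prop:hh11} already tells us that the border blocks of $\ch_1$ are $\widehat{\Sigma}_{12}$, $\widehat{\Sigma}_{21}$ and that the lower-right block is $\widehat{\Sigma}_{22}$; these border blocks also stay constant under the recursion. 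Assembling these pieces yields the asserted form of $\ch_1$, and since all of $\widetilde{\ch}_t$, $\widetilde{D}_t$, and the border blocks are already constant from step~$1$ on, $D_1$ and $\ch_1$ are indeed the terminal values — the algorithm converges in one step. There is no real obstacle here; the only thing to be careful about is the bookkeeping between $\ch$, $\widetilde{\ch}$, $\widetilde{\ch}_t$ and their blocks, and the elementary observation that $P = 0$ when $H_2$ is square and invertible, which is what collapses everything.
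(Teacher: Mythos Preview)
Your proof is correct and follows essentially the same route as the paper's: the key observation is that for $n_2=k$ the projection $I-H_2^\top(H_2H_2^\top)^{-1}H_2$ vanishes, so $\widetilde{\ch}=0$ in~(\ref{eq:hhtilde}), and then Lemma~\ref{prop:hh11} immediately gives $\ch^{11}_1=\widehat{\Sigma}_{12}\widehat{\Sigma}_{22}^{-1}\widehat{\Sigma}_{21}$ together with the stated border blocks and $D_1$. The paper stops there; your additional pass through the recursion of Proposition~\ref{algo:singD} to confirm that $\widetilde{\ch}_t\equiv 0$ for all $t\geq 1$ is a harmless (and arguably clarifying) elaboration of what the paper leaves implicit in ``the result follows.''
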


\begin{proof}
We use Proposition~\ref{prop:hh11} and notice that in the present
case the matrix $\widetilde{\ch}$ of (\ref{eq:hhtilde}) is equal to
zero. Therefore
$\widetilde{\ch}^{11}=\widehat{\Sigma}_{12}\widehat{\Sigma}_{22}^{-1}\widehat{\Sigma}_{21}$
and the result follows.
\end{proof}
It is remarkable that in this case we have convergence of the
iterates in one step only. Moreover the resulting values are exactly
the theoretical ones, which we have explicitly computed in
Remark~\ref{remark:expl}.

\section{Numerical examples}\label{section:numerics}

\subsection{Simulated data} In the present section we investigate the performance of the
Algorithm~\ref{algo1} and compare it to the behaviour of the EM
Algorithm~\ref{em}. In all examples we take $\widehat{\Sigma}$ equal
to $AA^\top+c\,{\rm diag}(d)$, where $A\in\R^{n\times m}$ with
$m\leq n$, $d\in\R^{n}_+$ and $c\geq 0$, for various values of
$n,m$. The matrices $A$ and the vector $d$ have been randomly
generated. The notation $A$=rand$(n,m)$ means that $A$ is a randomly
generated matrix of size $n\times m$, whose elements are
independently drawn from the uniform distribution on $[0,1]$. In all
cases the resulting matrix $\widehat{\Sigma}$ is strictly positive
definite. The reason for incorporating the component $d$ is that we
want to check whether the algorithm is able to reconstruct
$\widehat{\Sigma}=AA^\top+{\rm diag}(d)$ in case the inner size $k$
of the matrices $H_t$ produced by the algorithm is taken to be equal
to $m$.

We have also included results on the $L^2$-norm of the difference
between the given matrix $\widehat{\Sigma}$ and its approximants
$\Sigma_t=H_tH_t^\top+D_t$, i.e. we also compute
$\ell_t=\big(\tr((\widehat{\Sigma}-\Sigma_t)^\top(\widehat{\Sigma}-\Sigma_t))\big)^{1/2}$.
The origin of this extra means of comparison of behavior of the
Algorithms~\ref{algo1} and~\ref{em} is that we detected in a number
of cases that in terms of the value of the divergences, the
difference between the approximations generated by the two
algorithms was, after enough iterations, negligible, whereas a
simple look at the matrices produced by the final iterations
revealed that Algorithm~\ref{algo1} produced very acceptable, if not
outstanding results, whereas the approximations generated by the EM
algorithm~\ref{em} for the same given matrix were rather poor. This
phenomenon is reflected by a huge $L^2$-error of the EM algorithm,
as compared to a small one of Algorithm~\ref{algo1}. The choice for
the $L^2$-norm is to some extent arbitrary. We are basically
concerned with good approximations in terms of I-divergence, and it
is therefore a priori not completely fair to judge the quality of
approximations by switching to another criterion. However, the
$L^2$-norm of the error has an intuitive interpretation, is easy to
compute and also has some appealing properties in the context of the
two partial minimization problems, cf.\ Remarks~\ref{remark:inv}
and~\ref{remark:l2d}.

We have plotted various characteristics of the algorithms against
the number of iterations, for both of them the divergence at each
iteration, as well as their counterparts for the $L^2$-norm
(dashed lines). For reasons of clarity, in all figures we have
displayed the characteristics on a logarithmic scale.
\medskip
\begin{center}\begin{tabular}{ll}
Legenda &  \\  \hline \\ [-1.8ex]
solid blue: & divergence $\ii(\widehat\Sigma||\widehat\Sigma_t)$ in algorithm~\ref{algo1} \\
solid red: & divergence in the EM algorithm~\ref{em} \\
%solid green: & ratio of the two divergences \\
dashed blue: & $L^2$-norm of $\widehat\Sigma - \widehat\Sigma_t$ in algorithm~\ref{algo1} \\
dashed red: & $L^2$-norm in the EM algorithm~\ref{em} \\
%dashed green: & ratio of the two $L^2$ norms
%\hline
\end{tabular}
\end{center}\medskip
The numerical results have been obtained by running Matlab.

\begin{figure}
\begin{center}
\includegraphics*[viewport=80 220 540 560,scale=0.7]{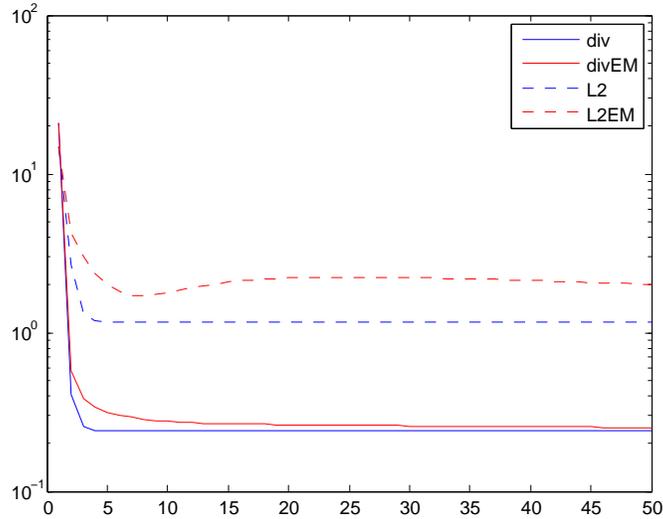}
\vspace{-5ex}\caption{{\rm $A$=rand(10,5) , $d=2*$rand(10,1) , $k$=2}}
\end{center}
\end{figure}

\begin{figure}
\begin{center}
\includegraphics*[viewport=80 220 540 560,scale=0.7]{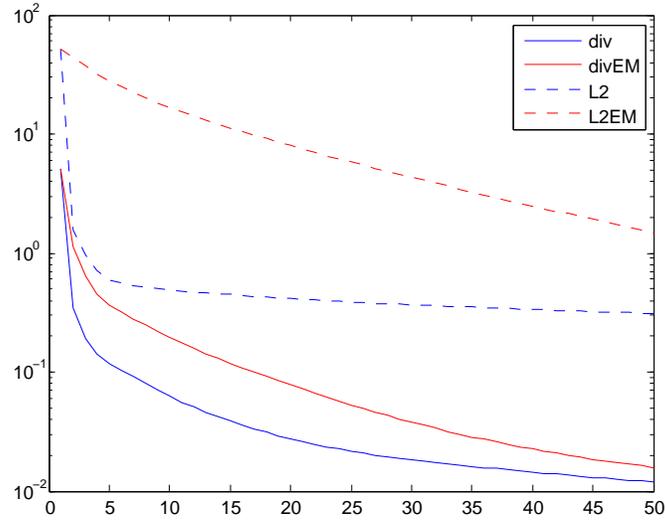}
\vspace{-5ex}\caption{{\rm $A$=rand(10,5) , $d=2*$rand(10,1) , $k$=5}}
\end{center}
\end{figure}

\begin{figure}
\begin{center}
\includegraphics*[viewport=80 220 540 560,scale=0.7]{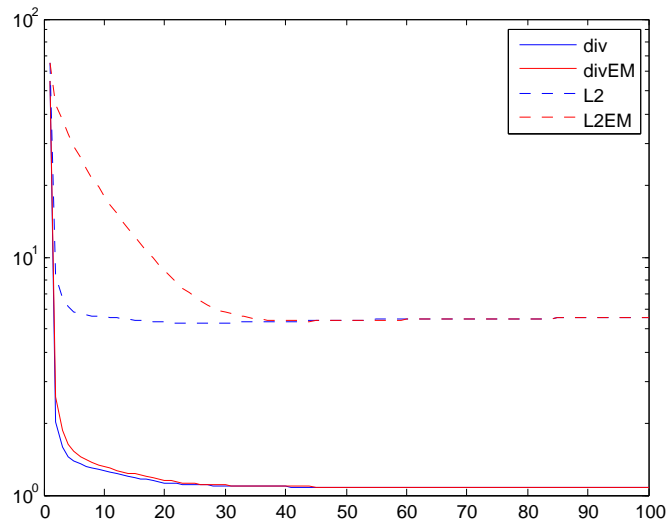}
\vspace{-5ex}\caption{{\rm $A$=rand(30,15) , $d=3*$rand(30,1) , $k$=5}}
\end{center}
\end{figure}

\begin{figure}
\begin{center}
\includegraphics*[viewport=80 220 540 560,scale=0.7]{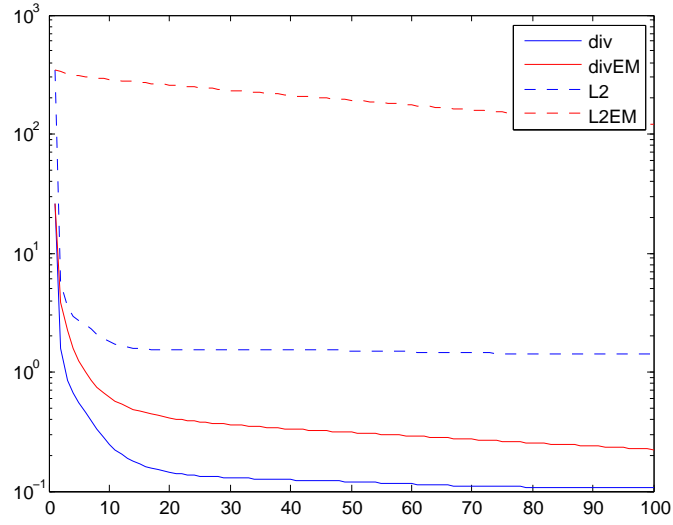}
\vspace{-5ex}\caption{{\rm $A$=rand(30,15) , $d=3*$rand(30,1) , $k$=15}}
\end{center}
\end{figure}

\begin{figure}
\begin{center}
\includegraphics*[viewport=80 220 540 560,scale=0.7]{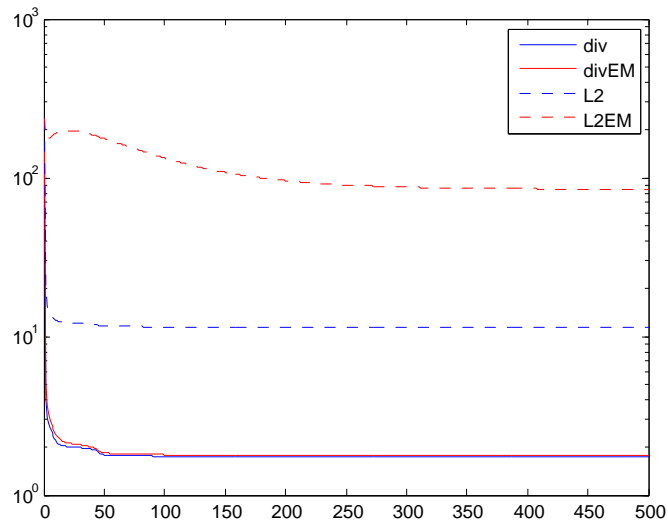}
\vspace{-5ex}\caption{{\rm $A$=rand(50,30) , $d=5*$rand(50,1) , $k$=10}}
\end{center}
\end{figure}

\begin{figure}
\begin{center}
\includegraphics*[viewport=80 220 540 560,scale=0.7]{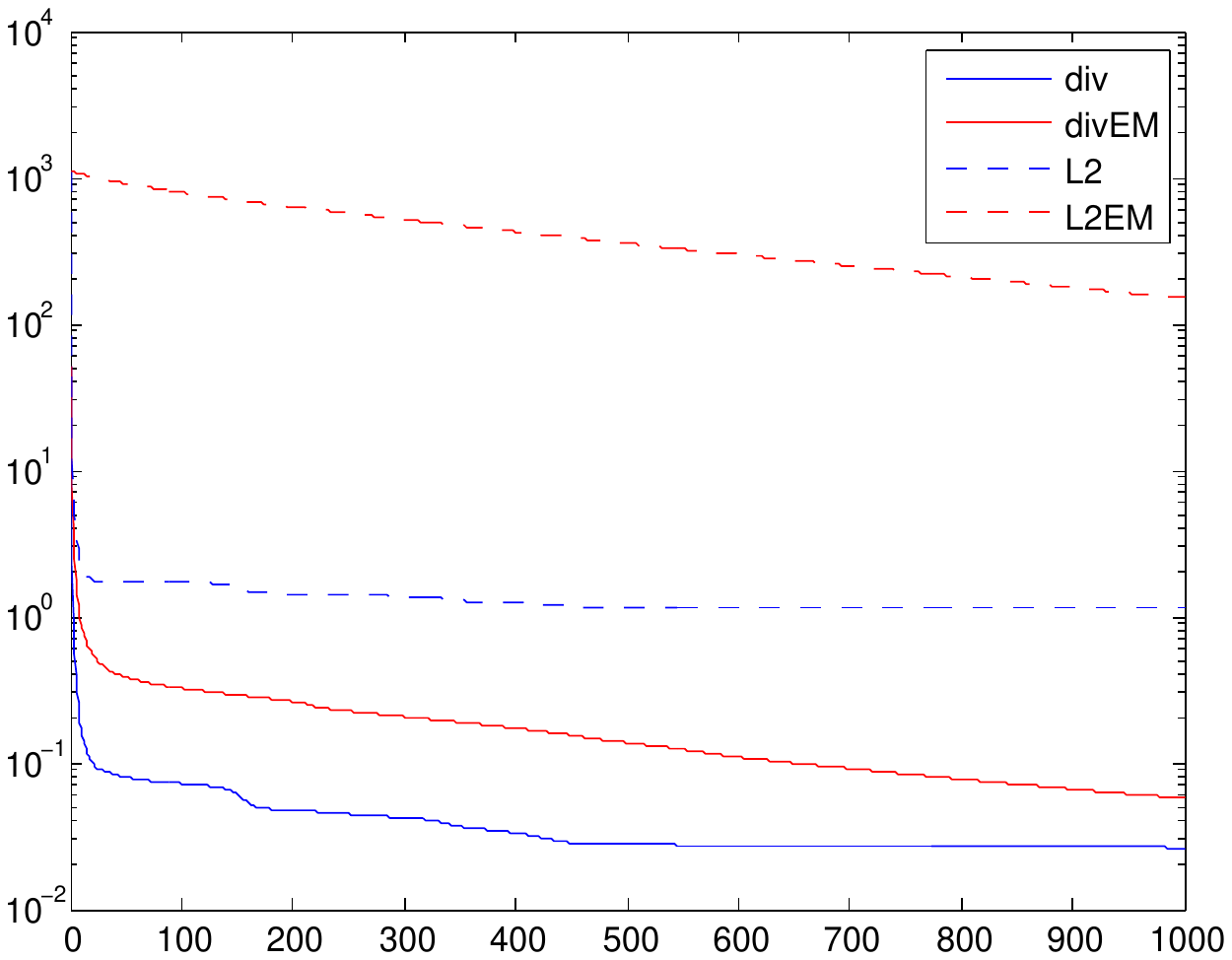}
\vspace{-5ex}\caption{{\rm $A$=rand(50,30) , $d=5*$rand(50,1) , $k$=30}}
\end{center}
\end{figure}

Figures 1 and 2 show the behaviour of the two algorithms in cases
with $n=10$ (which is relatively small) and for $k=2,5$
respectively. We observe that the performance of the algorithms
hardly differ, especially for $k=2$. In Figures 3 and 4, we have
$n=30$ and $k=5,15$ respectively. We notice that in terms of
divergence, the performance of the two algorithms is roughly the
same for $k=5$, but for $k=15$ there are noticeable differences.
But looking at the $L^2$-norm of the error, we even see a manifest
difference of the outcomes. The differences are even more
pronounced in Figures 5 and 6, where $n=50$ and $k=10, 30$
respectively. In the former case, in terms of divergences, the two
algorithms behave roughly the same, but there is a factor 10 of
difference in the $L^2$-erros. In the latter case, where $k=m$ one
would expect that both algorithms are able to retrieve the
original matrix $\widehat{\Sigma}$, which seems to be the case,
although Algorithm~\ref{algo1} behaves the best. Looking at the
$L^2$-error, we see a gross difference between the
Algorithm~\ref{algo1} and the EM algorithm of order about 100.
This striking difference in behaviour between the two algorithms
is typical.

\subsection{Real data example}

In the present section we test our algorithm on the data provided in
the original paper \cite{rubinthayer1982}, where the EM algorithm
for FA models has been presented first. The results, with in this
case $\widehat{\Sigma}$ the empirical correlation matrix of the
data, are presented in Figure~7. We observe that again
Algorithm~\ref{algo1} outperforms the EM algorithm. The underlying
numerical results are at first sight very close to those
of~\cite{rubinthayer1982} (we have also taken $k=4$), but we observe
like in the previous section that the convergence of the EM
algorithm is much slower than that of Algorithm~\ref{algo1} and
after 50 iterations (the same number as in~\cite{rubinthayer1982})
the differences are quite substantial.

\begin{figure}
\begin{center}
\includegraphics*[viewport=80 220 540 560,scale=0.7]{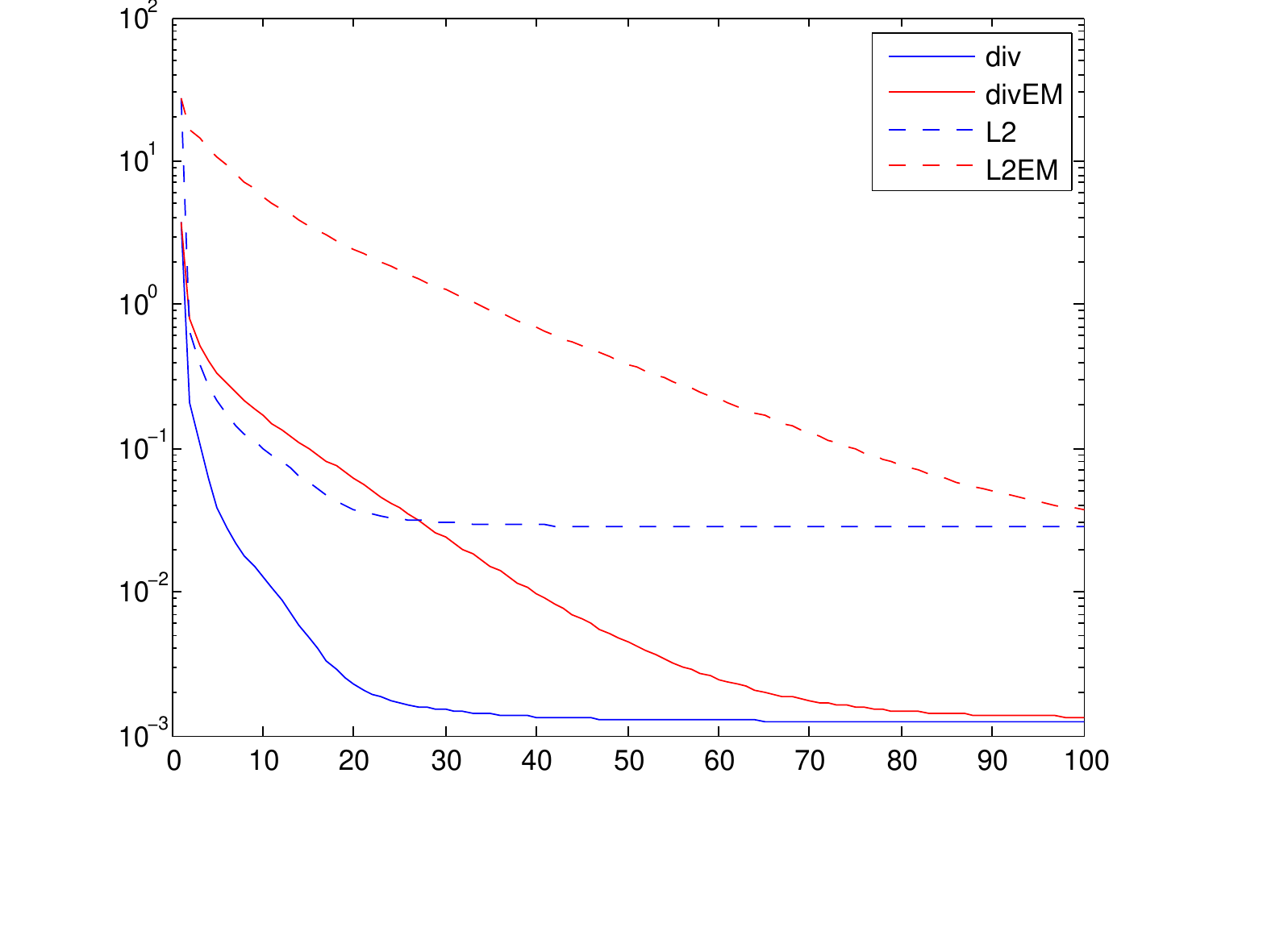}
\vspace{-10ex}\caption{Rubin-Thayer data, $k$=4}
\end{center}
\end{figure}

%%%%%%%%%%%%%%%%%%%%%%%%%%%%%%%%%%%%%%%%%%%%%%%%%%%%%%%%%%%%%%%%%%%%%%%%%%%%%%%%%%%%%%%%%

\appendix

%\section{Appendix} \label{appendix}
%\setcounter{equation}{0}
%
%In this appendix we collect some results for the multivariate
%normal distribution and some rules from matrix calculus. These
%results can be found in many textbooks, but are also easily
%verified by elementary calculations.

\section{Multivariate normal
distribution}\label{gauss}
\setcounter{equation}{0}

Let $(X^\top,Y^\top)^\top$ be a zero mean normally distributed
random vector with covariance matrix
\[
\Sigma=\begin{pmatrix} \Sigma_{XX} & \Sigma_{XY} \\
\Sigma_{YX} & \Sigma_{YY}
\end{pmatrix}.
\]
Assume that $\Sigma_{YY}$ is invertible, then the conditional
distribution of $X$ given $Y$ is normal, with mean vector $\ee[X|Y]
=\Sigma_{XY}\Sigma_{YY}^{-1}Y$ and covariance matrix
\begin{equation}\label{eq:condcov}
\cov[X|Y]=\Sigma_{XX}- \Sigma_{XY}\Sigma_{YY}^{-1}\Sigma_{YX}.
\end{equation}
Consider two normal distributions $\nu_1=N(\mu_1,\Sigma_1)$ and
$\nu_2=N(\mu_2,\Sigma_2)$ on a common Euclidean space. The
I-divergence is easily computed as
\begin{align}\label{eq:divmu}
\ii(\nu_1||\nu_2) & =\half \log
\frac{|\Sigma_2|}{|\Sigma_1|}-\frac{m}{2} +\half {\rm
tr}(\Sigma_2^{-1}\Sigma_1)+\half(\mu_1-\mu_2)^\top\Sigma_2^{-1}
(\mu_1-\mu_2) \nonumber \\
& = \ii(\Sigma_1||\Sigma_2) + \half(\mu_1-\mu_2)^\top\Sigma_2^{-1}
(\mu_1-\mu_2),
\end{align}
where $\ii(\Sigma_1||\Sigma_2)$ denotes as before the I-divergence
between positive definite matrices. The extra term, depending on the
nonzero means, did not appear in~(\ref{eq:divsigma}).

\section{Matrix identities}
\setcounter{equation}{0}

For ease of reference we collect here some well known identities from matrix algebra.

\medskip
The following lemma is verified by a straightforward computation.
\begin{lemma}\label{lemma:matrix}
Let $A,B,C,D$ be blocks of compatible sizes of a given matrix, with
$A$ and $D$ both square. If $D$ is invertible the following
decomposition holds
\[
\begin{pmatrix}
A & C \\B & D
\end{pmatrix}=
\begin{pmatrix}
I & CD^{-1} \\ 0 & I
\end{pmatrix}
\begin{pmatrix}
A-CD^{-1}B & 0 \\ 0 & D
\end{pmatrix}
\begin{pmatrix}
I & 0 \\D^{-1}B & I
\end{pmatrix}
\]
while, if $A$ is invertible, the following decomposition holds
\[
\begin{pmatrix}
A & C \\B & D
\end{pmatrix}=
\begin{pmatrix}
I & 0 \\ BA^{-1} & I
\end{pmatrix}
\begin{pmatrix}
A & 0 \\ 0 & D-BA^{-1}C
\end{pmatrix}
\begin{pmatrix}
I & A^{-1}C \\0 & I
\end{pmatrix}.
\]
Furthermore, assuming that $A$, $D$, and $A-CD^{-1}B$ are all
invertible, we have
\begin{align*}
& \begin{pmatrix} A & C \\B & D
\end{pmatrix}^{-1}= \\
& \quad \begin{pmatrix} (A-CD^{-1}B)^{-1} &
-(A-CD^{-1}B)^{-1}CD^{-1}
\\-D^{-1}B(A-CD^{-1}B)^{-1} &
D^{-1} + D^{-1}B(A-CD^{-1}B)^{-1}CD^{-1}
\end{pmatrix}.
\end{align*}
\end{lemma}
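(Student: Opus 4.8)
The plan is to prove all three assertions by direct block matrix multiplication; no idea beyond careful bookkeeping is required, and the invertibility hypotheses enter only to guarantee that the matrices written down make sense and that certain factors can be inverted.

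First I would verify the two factorizations. For the first one, assume $D$ invertible and multiply the three matrices on the right-hand side, say from right to left. Multiplying the block-diagonal middle factor by the lower unit-triangular factor produces $\begin{pmatrix} A-CD^{-1}B & 0 \\ B & D \end{pmatrix}$, since $D\,D^{-1}B = B$. Premultiplying this by the upper unit-triangular factor then reinstates the off-diagonal blocks: the $(1,1)$ entry becomes $(A-CD^{-1}B)+CD^{-1}B = A$, the $(1,2)$ entry becomes $CD^{-1}D = C$, and the bottom row is unchanged, giving exactly $\begin{pmatrix} A & C \\ B & D\end{pmatrix}$. The second factorization (with $A$ invertible) is the mirror image and is checked the same way — or one applies the first to the block matrix obtained after swapping the two block rows and the two block columns — so I would just record the analogous one-line computation.

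For the inverse formula I would use the first factorization. Under the three stated hypotheses each of its three factors is invertible: the unit-triangular factors have inverses obtained by negating the off-diagonal block, and the block-diagonal middle factor is invertible because both $A-CD^{-1}B$ and $D$ are. Hence the whole block matrix is invertible and its inverse is the product of the inverses in reverse order,
\[
\begin{pmatrix} I & 0 \\ -D^{-1}B & I \end{pmatrix}
\begin{pmatrix} (A-CD^{-1}B)^{-1} & 0 \\ 0 & D^{-1}\end{pmatrix}
\begin{pmatrix} I & -CD^{-1} \\ 0 & I \end{pmatrix}.
\]
Multiplying the last two factors gives a matrix with first row $\big((A-CD^{-1}B)^{-1},\ -(A-CD^{-1}B)^{-1}CD^{-1}\big)$ and second row $(0,\ D^{-1})$; premultiplying by the remaining unit-triangular factor then adds $-D^{-1}B$ times the first row to the second, which reproduces exactly the displayed formula.

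The computation is entirely routine, so there is no real obstacle; the only points deserving a moment's care are that the invertibility of the middle factor in the last step is precisely equivalent to the hypothesis that $A-CD^{-1}B$ and $D$ be invertible, and that the sign bookkeeping in the triangular inverses be done consistently. As an alternative to the argument above one could verify the inverse formula directly, by multiplying the claimed inverse against $\begin{pmatrix} A & C \\ B & D\end{pmatrix}$ and checking block by block that the product is the identity; this is even more elementary but longer, and I would keep it in reserve rather than use it.
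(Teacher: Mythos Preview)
Your proof is correct and follows exactly the approach the paper indicates: the lemma is stated there as being ``verified by a straightforward computation,'' and your block-by-block multiplication is precisely that computation carried out in full. There is nothing to add.
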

\begin{corollary}\label{cor:inv}
Let $A,B,C,D$ matrices as in Lemma~\ref{lemma:matrix} with $A$, $D$,
and $A-CD^{-1}B$ all invertible. Then $D-BAC$ is also invertible,
with
\[
(D-BAC)^{-1}=D^{-1}+D^{-1}B(A^{-1}-CD^{-1}B)^{-1}CD^{-1}.
\]
\end{corollary}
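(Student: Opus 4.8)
The plan is to deduce the identity for $(D-BAC)^{-1}$ directly from the block inversion formula in Lemma~\ref{lemma:matrix} by a clever choice of blocks, rather than by brute-force manipulation. Observe that the $(2,2)$-block of the inverse of $\begin{pmatrix} A & C \\ B & D\end{pmatrix}$, under the hypothesis that $A$, $D$, and $A-CD^{-1}B$ are invertible, is given by Lemma~\ref{lemma:matrix} as $D^{-1}+D^{-1}B(A-CD^{-1}B)^{-1}CD^{-1}$. On the other hand, using the \emph{second} decomposition of the same lemma (valid since $A$ is invertible), the matrix is invertible iff $D-BA^{-1}C$ is invertible, in which case its $(2,2)$-block equals $(D-BA^{-1}C)^{-1}$. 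Equating the two expressions for the $(2,2)$-block of the inverse yields
\[
(D-BA^{-1}C)^{-1}=D^{-1}+D^{-1}B(A-CD^{-1}B)^{-1}CD^{-1}.
\]

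The final step is purely cosmetic: replace $A$ by $A^{-1}$ throughout. Since $A$ is invertible iff $A^{-1}$ is, and the hypothesis $A-CD^{-1}B$ invertible becomes $A^{-1}-CD^{-1}B$ invertible after this substitution, the displayed identity turns into exactly the claimed formula, with the left-hand side now reading $(D-BAC)^{-1}$. In particular this shows $D-BAC$ is invertible under the stated hypotheses, which is the first assertion of the corollary.

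There is really only one point that needs a little care, and it is the main (minor) obstacle: one must check that the invertibility hypotheses line up correctly between the two decompositions. The first decomposition of Lemma~\ref{lemma:matrix} needs $D$ invertible; the second needs $A$ invertible; and to read off the $(2,2)$-blocks of the inverse one needs, respectively, $A-CD^{-1}B$ invertible and $D-BA^{-1}C$ invertible. The corollary assumes $A$, $D$, and $A-CD^{-1}B$ are all invertible, so the inverse exists and its $(2,2)$-block admits the first closed form; the second decomposition then forces $D-BA^{-1}C$ to be invertible as well (being the Schur complement of an invertible matrix whose other relevant blocks are invertible), and supplies the second closed form. After the substitution $A\mapsto A^{-1}$ this is precisely the statement. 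No genuine computation is required beyond writing down the two $(2,2)$-blocks from the lemma.
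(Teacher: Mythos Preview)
Your argument is correct and is essentially the paper's own proof: compare the lower right blocks of the inverse obtained from the two decompositions in Lemma~\ref{lemma:matrix}, applied with $A$ replaced by $A^{-1}$. The only cosmetic difference is that the paper performs the substitution $A\mapsto A^{-1}$ first and then reads off the two expressions, whereas you derive the identity for general $A$ and substitute afterwards; the content is identical.
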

\begin{proof}
Use the two decompositions of Lemma~\ref{lemma:matrix} with $A$
replaced by $A^{-1}$ and compute the two expressions of the lower
right block of the inverse matrix.
\end{proof}

\begin{corollary}\label{cor:ibc}
Let $B\in\R^{n\times m}$ and $C\in\R^{m\times n}$. Then $\det
(I_n-BC)=\det(I_m-CB)$ and $I_n-BC$ is invertible if and only if
$I_m-CB$ is invertible.
\end{corollary}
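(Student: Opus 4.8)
The plan is to apply the two block decompositions of Lemma~\ref{lemma:matrix} to a single $(n+m)\times(n+m)$ matrix and compare the resulting factorizations. Concretely, I would introduce
\[
M=\begin{pmatrix} I_n & B \\ C & I_m \end{pmatrix},
\]
which is square of size $n+m$, and compute $\det M$ in two ways.

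First, since the lower-right block $I_m$ is invertible, the first decomposition of Lemma~\ref{lemma:matrix} --- matching the lemma's $A$ and $D$ with $I_n$ and $I_m$, and the lemma's upper-right and lower-left blocks with $B$ and $C$ respectively --- expresses $M$ as the product of a block unit-upper-triangular matrix, the block-diagonal matrix $\mathrm{diag}(I_n-BC,\,I_m)$, and a block unit-lower-triangular matrix. The two triangular factors have determinant $1$, so $\det M=\det(I_n-BC)$. Second, since the upper-left block $I_n$ is invertible, the other decomposition of Lemma~\ref{lemma:matrix} expresses $M$ as a product whose only non-trivial determinant comes from the block-diagonal factor $\mathrm{diag}(I_n,\,I_m-CB)$, so $\det M=\det(I_m-CB)$. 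Equating the two expressions gives $\det(I_n-BC)=\det(I_m-CB)$.

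The invertibility assertion is then immediate: a square matrix is invertible precisely when its determinant is nonzero, so $I_n-BC$ is invertible iff $\det(I_n-BC)\neq0$ iff $\det(I_m-CB)\neq0$ iff $I_m-CB$ is invertible. There is no genuinely hard step; the only point needing care is to keep the block names $B$, $C$ of Lemma~\ref{lemma:matrix} distinct from the matrices $B$, $C$ of the present corollary when matching the two. As an alternative, one could bypass determinants and verify directly that $(I_n-BC)^{-1}=I_n+B(I_m-CB)^{-1}C$ whenever $I_m-CB$ is invertible (and symmetrically with the roles of $B$ and $C$ swapped), which proves the equivalence of invertibility constructively; but the determinant route additionally yields the identity $\det(I_n-BC)=\det(I_m-CB)$ at no extra cost, so I would prefer it.
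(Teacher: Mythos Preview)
Your proof is correct and follows essentially the same approach as the paper: both apply the two block decompositions of Lemma~\ref{lemma:matrix} to the $(n+m)\times(n+m)$ matrix with identity blocks on the diagonal and $B,C$ off the diagonal, then equate the two resulting determinant expressions. The only cosmetic difference is that the paper places $I_m$ in the upper-left and $I_n$ in the lower-right (taking the lemma's $A=I_m$, $D=I_n$), whereas you do the reverse; this is immaterial.
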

\begin{proof}
Use the two decompositions of Lemma~\ref{lemma:matrix} with
$A=I_m$ and $D=I_n$ to compute the determinant of the block
matrix.
\end{proof}

\begin{corollary}\label{cor:ihht}
Let $D$ be a positive definite matrix, not necessarily strictly
positive definite. If $HH^\top+D$ is strictly positive definite then also
$I-H^\top(HH^\top + D)^{-1}H$ is strictly positive.
\end{corollary}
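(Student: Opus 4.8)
The plan is to avoid the closed form $I-H^\top(HH^\top+D)^{-1}H=(I+H^\top D^{-1}H)^{-1}$, which requires $D^{-1}$ and is therefore unavailable once $D$ is singular, and instead to read off the positivity of $S:=I-H^\top M^{-1}H$ (where $M:=HH^\top+D$) from a bordered matrix together with the two block factorizations supplied by Lemma~\ref{lemma:matrix}. Concretely, I would introduce the symmetric $(n+k)\times(n+k)$ matrix
\[
N=\begin{pmatrix} M & H \\ H^\top & I_k\end{pmatrix},
\]
and exploit that $S$ is exactly the Schur complement of $M$ in $N$, while the original $D$ is the Schur complement of the $I_k$ block. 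Crucially, forming these Schur complements never calls for inverting $D$.

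First I would apply the first decomposition of Lemma~\ref{lemma:matrix} to $N$, pivoting on the invertible block $I_k$. Its Schur complement is $M-H\,I_k^{-1}H^\top=(HH^\top+D)-HH^\top=D$, so $N$ factors as $\tilde L\,\mathrm{diag}(D,I_k)\,\tilde L^\top$ with $\tilde L=\left(\begin{smallmatrix} I & H\\ 0 & I\end{smallmatrix}\right)$ unit triangular. This is a genuine congruence, and since $D\ge 0$ and $I_k>0$ it already yields $N\ge 0$. Next I would apply the second decomposition of Lemma~\ref{lemma:matrix}, now pivoting on $M$, which is invertible by hypothesis; its Schur complement is precisely $I_k-H^\top M^{-1}H=S$, so $N=L\,\mathrm{diag}(M,S)\,L^\top$ with $L=\left(\begin{smallmatrix} I & 0\\ H^\top M^{-1} & I\end{smallmatrix}\right)$, again a congruence.

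Comparing the two congruent forms of $N$ and invoking Sylvester's law of inertia then finishes the argument. Since $M>0$ and $I_k>0$ contribute only positive inertia, the numbers of positive, zero, and negative eigenvalues of $S$ must match those of $D$. Because $D\ge 0$ has no negative eigenvalues, neither does $S$, so $S\ge 0$ for every positive semidefinite $D$; and the inertia bookkeeping sharpens this to $\mathrm{rank}(S)=k-(n-\mathrm{rank}(D))$, so $S$ is nonsingular, hence strictly positive, exactly when $D$ is nonsingular. In particular, whenever $D>0$ one recovers the asserted strict positivity $S>0$, while for a merely semidefinite $D$ the same argument covers the flagged singular case and shows that the rank deficiency of $S$ equals that of $D$.

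The main obstacle is precisely this singular-$D$ regime: the convenient formula for $S$ breaks down, so one cannot simply invert $I+H^\top D^{-1}H$. The congruence route is tailored to sidestep this by never inverting $D$; the price is that one tracks inertia rather than reading off invertibility directly, and one must verify that in each factorization the two outer factors are mutual transposes (which they are, using the symmetry of $M^{-1}$) so that the identities are true congruences. Once that check is made, the single inertia comparison delivers both the positive semidefinite statement for general $D\ge 0$ and the strict conclusion in the nonsingular case in which the corollary is subsequently applied.
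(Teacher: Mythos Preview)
Your approach is essentially the same as the paper's: both form the symmetric bordered matrix $\begin{pmatrix} I & H^\top \\ H & HH^\top+D\end{pmatrix}$ (the paper writes it with the $I$-block in the upper left, you with $M$ there, which is just a block permutation) and compare the two block factorizations of Lemma~\ref{lemma:matrix}, whose middle blocks are $\mathrm{diag}(I,D)$ and $\mathrm{diag}(S,M)$ respectively. The paper argues directly from the congruence that positive (semi)definiteness of the first middle block forces that of the second; you make the same comparison but phrase it through Sylvester's law of inertia, which is a harmless embellishment.

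Your inertia bookkeeping is actually a bit more careful than the paper's own argument: you correctly observe that the congruence only yields $S\ge 0$ in general, with $\mathrm{rank}(S)=k-(n-\mathrm{rank}(D))$, so that $S>0$ holds precisely when $D$ is nonsingular. The paper's statement as written (``not necessarily strictly positive definite $D$'' yet ``$S$ strictly positive'') glosses over this, but in every place the corollary is invoked (e.g.\ for the invertibility of $R_t$ in Section~\ref{section:algo}) one has $D_t>0$ by Remark~\ref{remark:hhd}, so your refinement is exactly what is needed.
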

\begin{proof}
Use Lemma~\ref{lemma:matrix} with $A=I$, $B=H$, $C=H^\top$ and $D$
replaced with  $HH^\top + D$. The two middle matrices in the
decompositions are respectively
\[
\begin{pmatrix}
I-H^\top(HH^\top + D)^{-1}H & 0 \\
0 & HH^\top + D
\end{pmatrix}
\]
and
\[
\begin{pmatrix}
I & 0 \\
0 & D
\end{pmatrix}.
\]
Hence, from the second decomposition it follows from positive
definiteness of $D$ that $\begin{pmatrix} I & H^\top \\ H &
HH^\top + D
\end{pmatrix}$ is positive definite, and then from the first
decomposition that $I-H^\top(HH^\top + D)^{-1}H$ is positive
definite.
\end{proof}

\section{Decompositions of the I-divergence}\label{section:decdiv}
\setcounter{equation}{0}

We derive here a number of decomposition results for the
I-divergence between two probability measures. Similar results are
derived in~\cite{cramer2}, see also~\cite{finessospreij} for the
discrete case. These decompositions yield the core arguments for the
proofs of the propositions in Sections~\ref{section:pms}
and~\ref{sec:singularD}.
\begin{lemma}\label{lemma:xcondy}
Let $\pp_{XY}$ and $\qq_{XY}$ be given probability distributions of
a Euclidean random vector $(X,Y)$ and denote by $\pp_{X|Y}$ and
$\qq_{X|Y}$ the corresponding regular conditional distributions of
$X$ given $Y$. Assume that $\pp_{XY} \ll \qq_{XY}$. Then
\begin{equation}\label{eq:divsplit}
\ii(\pp_{XY}||\qq_{XY})=\ii(\pp_Y||\qq_Y)+\mathbb{E}_{\pp_Y}\ii(\pp_{X|Y}||\qq_{X|Y}).
\end{equation}
\end{lemma}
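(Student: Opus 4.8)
The plan is to reduce the identity to the chain rule for Radon--Nikodym derivatives under disintegration. Write $\pp=\pp_{XY}$, $\qq=\qq_{XY}$, and let $\varphi=\dd\pp/\dd\qq$, which exists on the product space by the hypothesis $\pp\ll\qq$. First I would record the marginal absolute continuity: since the coordinate projection onto the $Y$-space maps $\pp$ to $\pp_Y$ and $\qq$ to $\qq_Y$, the assumption $\pp\ll\qq$ forces $\pp_Y\ll\qq_Y$; put $\psi=\dd\pp_Y/\dd\qq_Y$. Because the underlying spaces are Euclidean, the regular conditional distributions $\pp_{X|Y}$ and $\qq_{X|Y}$ exist and give the disintegrations $\pp(\dd x,\dd y)=\pp_{X|Y=y}(\dd x)\,\pp_Y(\dd y)$ and $\qq(\dd x,\dd y)=\qq_{X|Y=y}(\dd x)\,\qq_Y(\dd y)$.

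The heart of the argument is to produce a jointly measurable function $\rho(x,y)\ge 0$ which, for $\qq_Y$-almost every $y$, is a version of $\dd\pp_{X|Y=y}/\dd\qq_{X|Y=y}$, and to show that
\[
\varphi(x,y)=\psi(y)\,\rho(x,y)\qquad\qq\text{-a.e.}
\]
For the factorization one checks that $(x,y)\mapsto\psi(y)\rho(x,y)$ satisfies the defining property of $\dd\pp/\dd\qq$: for a measurable rectangle $A\times B$, Tonelli gives
\[
\int_{A\times B}\psi(y)\rho(x,y)\,\qq(\dd x,\dd y)=\int_B\psi(y)\,\pp_{X|Y=y}(A)\,\qq_Y(\dd y)=\int_B\pp_{X|Y=y}(A)\,\pp_Y(\dd y)=\pp(A\times B),
\]
and a monotone class (or Dynkin $\pi$--$\lambda$) argument upgrades this from rectangles to all measurable sets, whence $\psi\rho=\varphi$ $\qq$-a.e. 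The same rectangle computation, taken with $A$ the whole space, also shows $\int\rho(\cdot,y)\,\dd\qq_{X|Y=y}=1$ for $\pp_Y$-a.e.\ $y$, so $\rho(\cdot,y)$ is genuinely the density of $\pp_{X|Y=y}$ against $\qq_{X|Y=y}$. The one genuinely technical point --- and the step I expect to be the main obstacle --- is the construction of a \emph{jointly measurable} version $\rho$ of the conditional derivatives; this can be done via a measurable Radon--Nikodym theorem for kernels, or by approximating the kernels on a countable generating algebra. Everything else is Fubini bookkeeping.

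To conclude, restrict to $\{\varphi>0\}$, which carries full $\pp$-mass; there $\log\varphi=\log\psi+\log\rho$. Integrating against $\pp$ and using the disintegration once more,
\[
\ii(\pp||\qq)=\mathbb{E}_{\pp}\log\varphi=\mathbb{E}_{\pp}\log\psi+\mathbb{E}_{\pp}\log\rho=\mathbb{E}_{\pp_Y}\log\psi+\mathbb{E}_{\pp_Y}\Big(\mathbb{E}_{\pp_{X|Y}}\log\rho\Big),
\]
and since $\mathbb{E}_{\pp_Y}\log\psi=\ii(\pp_Y||\qq_Y)$ while, for $\pp_Y$-a.e.\ $y$, $\mathbb{E}_{\pp_{X|Y=y}}\log\rho(\cdot,y)=\ii(\pp_{X|Y=y}||\qq_{X|Y=y})$ by definition, this is exactly~(\ref{eq:divsplit}). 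A little care is needed to legitimate splitting $\mathbb{E}_\pp\log\varphi$ into two integrals: one argues first with the truncations $\log^+$ and $\log^-$, noting that each of the three nonnegative I-divergences appearing is well defined, so no $\infty-\infty$ arises.
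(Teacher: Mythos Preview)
Your proof is correct and follows essentially the same route as the paper's: establish $\pp_Y\ll\qq_Y$, invoke the factorization $\dd\pp_{XY}/\dd\qq_{XY}=(\dd\pp_{X|Y}/\dd\qq_{X|Y})(\dd\pp_Y/\dd\qq_Y)$, take logarithms, and split the $\pp$-expectation by conditioning on $Y$. The paper's proof is terser---it simply asserts the conditional Radon--Nikodym theorem and the a.s.\ factorization without the monotone-class verification or the $\log^\pm$ truncation argument you supply---so your version is a more detailed execution of the same idea rather than a different approach.
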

\begin{proof} It is easy to see that we also have $\pp_Y \ll \qq_Y$.
Moreover we also have absolute continuity of the conditional laws,
in the sense that if $0$ is a version of the conditional probability
$\qq(X\in B|Y)$, then it is also a version of $\pp(X\in B|Y)$. One
can show that a conditional version of the Radon-Nikodym theorem
applies and that a conditional Radon-Nikodym derivative
$\frac{\dd\pp_{X|Y}}{\dd\qq_{X|Y}}$ exists $\qq_Y$-almost surely.
Moreover, one has the $\qq_{XY}$-a.s.\ factorization
\[
\frac{\dd\pp_{XY}}{\dd\qq_{XY}}=\frac{\dd\pp_{X|Y}}{\dd\qq_{X|Y}}\frac{\dd\pp_{Y}}{\dd\qq_{Y}}.
\]
Taking logarithms on both sides and expectation under $\pp_{XY}$
yields
\[
\mathbb{E}_{\pp_{XY}}\log\frac{\dd\pp_{XY}}{\dd\qq_{XY}}=\mathbb{E}_{\pp_{XY}}\log
\frac{\dd\pp_{X|Y}}{\dd\qq_{X|Y}}+\mathbb{E}_{\pp_{XY}}\log\frac{\dd\pp_{Y}}{\dd\qq_{Y}}.
\]
Writing the first term on the right hand side as
$\mathbb{E}_{\pp_{XY}}\{\mathbb{E}_{\pp_{XY}}[\log\frac{\dd\pp_{X|Y}}{\dd\qq_{X|Y}}|Y]\}$,
we obtain
$\mathbb{E}_{\pp_{Y}}\{\mathbb{E}_{\pp_{X|Y}}[\log\frac{\dd\pp_{X|Y}}{\dd\qq_{X|Y}}|Y]\}$.
The result follows.
\end{proof}
The decomposition of Lemma~\ref{lemma:xcondy} is useful when solving
I-divergence minimization problems with marginal constraints, like
the one considered below.
\begin{proposition}\label{prop:gpm1}
Let $\qq_{XY}$ and $\pp^0_Y$ be given probability distributions of a
Euclidean random vector $(X,Y)$, and of its subvector $Y$
respectively. Consider the I-divergence minimization problem
$$
\min_{\pp_{XY} \in \mathcal P} \, \ii(\pp_{XY}||\qq_{XY}),$$ where
$$
\mathcal P := \{ \pp_{XY} \, | \, \int \pp_{XY}(dx, Y) =  \pp^0_Y
\}.
$$
If the marginal $\pp^0_Y \ll \qq^0_Y$, then the I-divergence is
minimized by $\pp^*_{XY}$ specified by the Radon-Nikodym derivative
\begin{equation}\label{eq:rn}
\frac{\dd\pp^*_{XY}}{\dd\qq_{XY}}=\frac{\dd\pp^0_Y}{\dd\qq_Y}.
\end{equation}
Moreover the Pythagorean rule holds i.e., for any other distribution
$\pp \in \mathcal P$,
\begin{equation}\label{eq:p1}
\ii(\pp_{XY}||\qq_{XY})=\ii(\pp_{XY}||\pp^*_{XY})+\ii(\pp^*_{XY}||\qq_{XY}),
\end{equation}
and one also has
\begin{equation}\label{eq:pq0}
\ii(\pp^*_{XY}||\qq_{XY})=\ii(\pp^0_Y||\qq_Y).
\end{equation}
\end{proposition}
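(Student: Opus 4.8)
The plan is to deduce the whole statement from the chain rule for I-divergence, Lemma~\ref{lemma:xcondy}. If $\pp_{XY}\not\ll\qq_{XY}$ then $\ii(\pp_{XY}||\qq_{XY})=+\infty$ and every assertion below is vacuous, so I would restrict attention to those $\pp_{XY}\in\mathcal P$ with $\pp_{XY}\ll\qq_{XY}$; for such measures automatically $\pp^0_Y=\pp_Y\ll\qq_Y$, consistently with the hypothesis, and the conditional Radon--Nikodym derivatives needed in~(\ref{eq:divsplit}) exist.

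First I would check that $\pp^*_{XY}$, defined by~(\ref{eq:rn}), is a legitimate competitor and identify it concretely. Since $\dd\pp^0_Y/\dd\qq_Y$ is a function of $y$ only, integrating~(\ref{eq:rn}) first in $x$ shows that $\pp^*_{XY}$ is a probability measure whose $Y$-marginal is precisely $\pp^0_Y$, so $\pp^*_{XY}\in\mathcal P$; moreover, because its density with respect to $\qq_{XY}$ depends on $y$ alone, disintegrating $\qq_{XY}$ along $Y$ shows that $\pp^*_{XY}$ has the same conditional law as $\qq_{XY}$, i.e.\ $\pp^*_{X|Y}=\qq_{X|Y}$ for $\pp^0_Y$-almost every $y$. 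Substituting this into~(\ref{eq:divsplit}) makes the conditional term disappear and yields~(\ref{eq:pq0}), namely $\ii(\pp^*_{XY}||\qq_{XY})=\ii(\pp^0_Y||\qq_Y)$.

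Optimality then follows at once: applying~(\ref{eq:divsplit}) to an arbitrary $\pp_{XY}\in\mathcal P$ and using $\pp_Y=\pp^0_Y$ gives
\[
\ii(\pp_{XY}||\qq_{XY})=\ii(\pp^0_Y||\qq_Y)+\mathbb{E}_{\pp^0_Y}\,\ii(\pp_{X|Y}||\qq_{X|Y})\ \ge\ \ii(\pp^0_Y||\qq_Y)=\ii(\pp^*_{XY}||\qq_{XY}),
\]
the inequality holding because the conditional term is nonnegative. For the Pythagorean rule~(\ref{eq:p1}) I would invoke~(\ref{eq:divsplit}) a second time, now for the pair $(\pp_{XY},\pp^*_{XY})$: both measures have $Y$-marginal $\pp^0_Y$, so the marginal term is $\ii(\pp^0_Y||\pp^0_Y)=0$, and using $\pp^*_{X|Y}=\qq_{X|Y}$ one gets $\ii(\pp_{XY}||\pp^*_{XY})=\mathbb{E}_{\pp^0_Y}\,\ii(\pp_{X|Y}||\qq_{X|Y})$. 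Adding~(\ref{eq:pq0}) reproduces exactly the decomposition of $\ii(\pp_{XY}||\qq_{XY})$ displayed above, which is~(\ref{eq:p1}).

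The only genuinely delicate point is the measure-theoretic hygiene: one must make sure the conditional Radon--Nikodym derivatives used by Lemma~\ref{lemma:xcondy} are well defined for all three pairs $(\pp_{XY},\qq_{XY})$, $(\pp^*_{XY},\qq_{XY})$, $(\pp_{XY},\pp^*_{XY})$, and that the disintegration identifying $\pp^*_{XY}$ with ``the conditional law of $\qq$ against the marginal $\pp^0_Y$'' is valid. All of this follows from $\pp^0_Y\ll\qq_Y$ together with the existence of regular conditional distributions on Euclidean spaces, exactly as in the proof of Lemma~\ref{lemma:xcondy}; everything else is routine bookkeeping with the chain rule.
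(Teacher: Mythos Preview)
Your proof is correct and follows essentially the same approach as the paper: both derive everything from the chain rule of Lemma~\ref{lemma:xcondy}, identifying $\pp^*_{XY}$ via $\pp^*_{X|Y}=\qq_{X|Y}$ and reading off optimality and~(\ref{eq:pq0}) from~(\ref{eq:divsplit}). The only cosmetic difference is in the Pythagorean rule: the paper splits $\log(\dd\pp_{XY}/\dd\qq_{XY})$ directly through $\pp^*_{XY}$, whereas you invoke the chain rule a second time on the pair $(\pp_{XY},\pp^*_{XY})$; the two computations are equivalent.
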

\begin{proof}
The starting point is equation~(\ref{eq:divsplit}), which now takes
the form
\begin{equation}\label{eq:divsplit0}
\ii(\pp_{XY}||\qq_{XY})=\ii(\pp^0_Y||\qq_Y)+\mathbb{E}_{\pp_Y}\ii(\pp_{X|Y}||\qq_{X|Y}).
\end{equation}
Since the first term on the right hand side is fixed, the minimizing
$\pp^*_{XY}$ must satisfy $\pp^*_{X|Y}=\qq_{X|Y}$. It follows that
$\pp^*_{XY}=\pp^*_{X|Y}\pp^0_Y=\qq_{X|Y}\pp^0_Y$, thus
verifying~(\ref{eq:rn}) and~(\ref{eq:pq0}). We finally show
that~(\ref{eq:p1}) holds.
\begin{align*}
\ii(\pp_{XY}||\qq_{XY}) & =\mathbb{E}_{\pp_{XY}} \log \frac{\dd\pp_{XY}}{\dd\pp^*_{XY}}+\mathbb{E}_{\pp_{XY}} \log \frac{\dd\pp^*_{XY}}{\dd\qq_{XY}} \\
& = \ii(\pp_{XY}||\pp^*_{XY}) + \mathbb{E}_{\pp_Y}\log \frac{\dd\pp^0_Y}{\dd\qq_Y} \\
& = \ii(\pp_{XY}||\pp^*_{XY}) + \mathbb{E}_{\pp^0_Y}\log
\frac{\dd\pp^0_Y}{\dd\qq_Y},
\end{align*}
where we used that any $\pp_{XY} \in \mathcal P$ has $Y$-marginal
distribution $\pp^0_Y$.
\end{proof}
The results above can be extended to the case where the random
vector $(X, Y):=(X, Y_1, \dots Y_m)$, i.e. $Y$ consists of $m$
random subvectors $Y_i$. For any probability distribution $\pp_{XY}$
on $(X, Y)$, consider the conditional distributions $\pp_{Y_i|X}$
and define the probability distribution $\widetilde{\pp}_{XY}$ on
$(X,Y)$:
$$
\widetilde{\pp}_{XY} = \prod_i \pp_{Y_i|X} \pp_X.
$$
Note that, under $\widetilde{\pp}_{XY}$, the $Y_i$ are conditionally
independent given $X$. The following lemma sharpens
Lemma~\ref{lemma:xcondy}.
\begin{lemma}\label{lemma:xyi}
Let $\pp_{XY}$ and $\qq_{XY}$ be given probability distributions of
a Euclidean random vector $(X,Y):=(X,Y_1,\dots Y_m)$. Assume that
$\pp_{XY} \ll \qq_{XY}$ and that, under $\qq_{XY}$, the subvectors
$Y_i$ of $Y$ are conditionally independent given $X$, then
\[
\ii(\pp_{XY}||\qq_{XY})=\ii(\pp_{XY}||\widetilde{\pp}_{XY})+\sum_i\mathbb{E}_{\pp_X}\ii(\pp_{Y_i|X}||\qq_{Y_i|X})
+ \ii(\pp_X||\qq_X).
\]
\end{lemma}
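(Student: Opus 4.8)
\textbf{Proof proposal for Lemma~\ref{lemma:xyi}.}

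The plan is to apply the factorization of Lemma~\ref{lemma:xcondy} twice and then exploit the conditional-independence structure of $\qq_{XY}$ to rewrite the conditional term as a sum. First I would decompose $\ii(\pp_{XY}||\qq_{XY})$ by conditioning on $X$ (rather than on $Y$ as stated, i.e. interchanging the roles of the two subvectors in Lemma~\ref{lemma:xcondy}), obtaining
\[
\ii(\pp_{XY}||\qq_{XY})=\ii(\pp_X||\qq_X)+\mathbb{E}_{\pp_X}\,\ii(\pp_{Y|X}||\qq_{Y|X}).
\]
The first term is already one of the three terms in the claimed identity, so the work reduces to showing
\[
\mathbb{E}_{\pp_X}\,\ii(\pp_{Y|X}||\qq_{Y|X})=\ii(\pp_{XY}||\widetilde{\pp}_{XY})+\sum_i\mathbb{E}_{\pp_X}\,\ii(\pp_{Y_i|X}||\qq_{Y_i|X}).
\]

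The key step is a pointwise (in $X$) manipulation of the Radon--Nikodym derivative $\frac{\dd\pp_{Y|X}}{\dd\qq_{Y|X}}$. Using the hypothesis that the $Y_i$ are conditionally independent given $X$ under $\qq_{XY}$, we have the $\qq_{XY}$-a.s.\ factorization $\qq_{Y|X}=\prod_i\qq_{Y_i|X}$. Insert the telescoping factor $\frac{\dd\widetilde{\pp}_{Y|X}}{\dd\widetilde{\pp}_{Y|X}}$, where by construction $\widetilde{\pp}_{Y|X}=\prod_i\pp_{Y_i|X}$, to get
\[
\frac{\dd\pp_{Y|X}}{\dd\qq_{Y|X}}=\frac{\dd\pp_{Y|X}}{\dd\widetilde{\pp}_{Y|X}}\cdot\prod_i\frac{\dd\pp_{Y_i|X}}{\dd\qq_{Y_i|X}}.
\]
Taking logarithms, integrating against $\pp_{Y|X}$, and then against $\pp_X$, the first factor contributes $\mathbb{E}_{\pp_X}\ii(\pp_{Y|X}||\widetilde{\pp}_{Y|X})$, which equals $\ii(\pp_{XY}||\widetilde{\pp}_{XY})$ since $\pp_{XY}$ and $\widetilde{\pp}_{XY}$ share the same $X$-marginal $\pp_X$ (apply Lemma~\ref{lemma:xcondy} conditioning on $X$, with the $\ii(\pp_X||\pp_X)=0$ term vanishing). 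The $i$-th factor in the product contributes $\mathbb{E}_{\pp_X}\mathbb{E}_{\pp_{Y|X}}\log\frac{\dd\pp_{Y_i|X}}{\dd\qq_{Y_i|X}}$; since the integrand depends on $Y$ only through $Y_i$, this reduces to $\mathbb{E}_{\pp_X}\ii(\pp_{Y_i|X}||\qq_{Y_i|X})$. Summing over $i$ and adding back $\ii(\pp_X||\qq_X)$ gives the claimed identity.

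The main obstacle is the measure-theoretic bookkeeping needed to justify the conditional Radon--Nikodym factorizations and the existence of the relevant conditional densities: one must check that $\pp_{XY}\ll\qq_{XY}$ propagates to $\pp_X\ll\qq_X$ and (for $\qq_X$-a.e.\ $x$) to $\pp_{Y|X=x}\ll\qq_{Y|X=x}$, and likewise $\pp_{Y|X=x}\ll\widetilde{\pp}_{Y|X=x}$, with all the a.s.\ qualifiers consistent. This is exactly the type of argument already carried out in the proof of Lemma~\ref{lemma:xcondy}, so I would invoke that lemma wherever possible and only spell out the additional product-factorization step, which is where the conditional-independence hypothesis on $\qq_{XY}$ enters. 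The remaining computations (taking logs, iterated expectations, reducing $\mathbb{E}_{\pp_{Y|X}}$ of a $Y_i$-measurable function to $\mathbb{E}_{\pp_{Y_i|X}}$) are routine.
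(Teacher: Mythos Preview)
Your proposal is correct and follows essentially the same route as the paper: apply Lemma~\ref{lemma:xcondy} with the roles of $X$ and $Y$ reversed, then insert $\widetilde{\pp}_{Y|X}$ into the conditional Radon--Nikodym derivative and use the product factorizations of $\widetilde{\pp}_{Y|X}$ and $\qq_{Y|X}$ to split the conditional term into $\ii(\pp_{XY}||\widetilde{\pp}_{XY})$ plus the sum over $i$. The paper's proof presents the same computation, writing the telescoping step as $\log\frac{\dd\pp_{Y|X}}{\dd\widetilde{\pp}_{Y|X}}+\log\frac{\dd\widetilde{\pp}_{Y|X}}{\dd\qq_{Y|X}}$ and invoking $\frac{\dd\pp_{XY}}{\dd\widetilde{\pp}_{XY}}=\frac{\dd\pp_{Y|X}}{\dd\widetilde{\pp}_{Y|X}}$ for the identification you also make.
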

\begin{proof}
The proof runs along the same lines as the proof of
Lemma~\ref{lemma:xcondy}. We start from
equation~(\ref{eq:divsplit}) with the roles of $X$ and $Y$
reversed. With the aid of $\widetilde{\pp}_{XY}$ one can decompose
the term $\mathbb{E}_{\pp_X}\ii(\pp_{Y|X}||\qq_{Y|X})$ as follows.
\begin{align*}
\mathbb{E}_{\pp_X}\ii(\pp_{Y|X}||\qq_{Y|X}) & =
\mathbb{E}_{\pp_X}\mathbb{E}_{\pp_{Y|X}}\log\frac{\dd\pp_{Y|X}}{\dd\qq_{Y|X}} \\
& = \mathbb{E}_{\pp_X}\mathbb{E}_{\pp_{Y|X}}\left(\log\frac{\dd\pp_{Y|X}}{\dd\widetilde{\pp}_{Y|X}} +\log\frac{\dd\widetilde{\pp}_{Y|X}}{\dd\qq_{Y|X}}\right)\\
& = \mathbb{E}_{\pp_X}\ii(\pp_{Y|X}||\widetilde{\pp}_{Y|X}) + \mathbb{E}_{\pp_X}\mathbb{E}_{\pp_{Y|X}}\sum_i \log \frac{\dd\pp_{Y_i|X}}{\dd\qq_{Y_i|X}} \\
& = \ii(\pp_{XY}||\widetilde{\pp}_{XY}) + \sum_i \mathbb{E}_{\pp_X}\mathbb{E}_{\pp_{Y_i|X}} \log \frac{\dd\pp_{Y_i|X}}{\dd\qq_{Y_i|X}} \\
& = \ii(\pp_{XY}||\widetilde{\pp}_{XY})+\sum_i\mathbb{E}_{\pp_X}\ii(\pp_{Y_i|X}||\qq_{Y_i|X}),
% \\
%& = \mathbb{E}_{\pp_X}\ii(\pp_{Y|X}||\widetilde{\pp}_{Y|X}) + \sum_i\mathbb{E}_{\pp_X}\ii(\pp_{Y_i|X}||\qq_{Y_i|X}) \\
%& =
%\ii(\pp_{XY}||\widetilde{\pp}_{XY})+\sum_i\mathbb{E}_{\pp_X}\ii(\pp_{Y_i|X}||\qq_{Y_i|X}),
\end{align*}
where we used the fact that
$\frac{\dd\pp_{XY}}{\dd\widetilde{\pp}_{XY}} =
\frac{\dd\pp_{Y|X}}{\dd\widetilde{\pp}_{Y|X}}$. This proves the
lemma.
\end{proof}
%%%%%%%%%%%%%%%%%%%%%%%%%%%%%%%%%%%%%%%%%%%%%%%%%%%%%%%%%%%%%%%%%%%%%%%%%%%%%%%%%%%%%%
The decomposition of Lemma~\ref{lemma:xyi} is useful when solving
I-divergence minimization problems with conditional independence
constraints, like the one considered below.
\begin{proposition}\label{prop:div2min}
Let $\pp_{XY}$ be a given probability distribution of a Euclidean
random vector $(X,Y):=(X,Y_1,\dots Y_m)$. Consider the I-divergence
minimization problem
$$
\min_{\qq_{XY} \in \mathcal Q} \, \ii(\pp_{XY}||\qq_{XY}),$$ where
$$
\mathcal Q := \{ \qq_{XY} \,\, | \,\, \qq_{Y_1, \dots, Y_m|X} =
\prod_i \qq_{Y_i|X} \}.
$$
If $\pp_{XY} \ll \qq_{XY}$ for some $\qq_{XY} \in \mathcal Q$ then
the I-divergence is minimized by
$$\qq^*_{XY}=\widetilde{\pp}_{XY}$$
Moreover, the Pythagorean rule holds, i.e. for any $\qq_{XY} \in
\mathcal Q$,
\[
\ii(\pp_{XY}||\qq_{XY})= \ii(\pp_{XY}||\qq^*_{XY}) +
\ii(\qq^*_{XY}||\qq_{XY}).
\]
\end{proposition}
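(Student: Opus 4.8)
The plan is to exploit the decomposition in Lemma~\ref{lemma:xyi}, which is tailor-made for this minimization. Since $\mathcal Q$ consists exactly of those $\qq_{XY}$ under which the $Y_i$ are conditionally independent given $X$, the hypothesis of Lemma~\ref{lemma:xyi} is satisfied for every competitor $\qq_{XY} \in \mathcal Q$. Applying that lemma to the pair $(\pp_{XY}, \qq_{XY})$ gives
\[
\ii(\pp_{XY}||\qq_{XY})=\ii(\pp_{XY}||\widetilde{\pp}_{XY})+\sum_i\mathbb{E}_{\pp_X}\ii(\pp_{Y_i|X}||\qq_{Y_i|X})
+ \ii(\pp_X||\qq_X).
\]
The first term on the right does not depend on $\qq_{XY}$, while the remaining terms are all nonnegative. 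Hence to minimize the left side one wants to make the last two terms vanish simultaneously: one needs $\qq_{X}=\pp_X$ and $\qq_{Y_i|X}=\pp_{Y_i|X}$ for each $i$. But these requirements are exactly the defining relations of $\widetilde{\pp}_{XY}=\prod_i \pp_{Y_i|X}\,\pp_X$, which does lie in $\mathcal Q$. Therefore $\qq^*_{XY}=\widetilde{\pp}_{XY}$ attains the lower bound $\ii(\pp_{XY}||\widetilde{\pp}_{XY})$ and is a minimizer. (One should also note that $\pp_{XY}\ll\widetilde{\pp}_{XY}$: this follows from the assumed absolute continuity $\pp_{XY}\ll\qq_{XY}$ for some $\qq_{XY}\in\mathcal Q$, since $\pp_X\ll\qq_X$ and $\pp_{Y_i|X}\ll\qq_{Y_i|X}$ $\pp_X$-a.s., which makes the factorized product dominate $\pp_{XY}$.)

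For the Pythagorean identity, the cleanest route is to feed $\qq^*_{XY}=\widetilde{\pp}_{XY}$ back into the same decomposition. Applying Lemma~\ref{lemma:xyi} to the pair $(\qq^*_{XY},\qq_{XY})$ — valid because $\qq^*_{XY}\in\mathcal Q$ so its $Y_i$ are conditionally independent given $X$, and $\qq_{XY}\in\mathcal Q$ so the domination hypothesis on the $\qq$-side holds — yields
\[
\ii(\qq^*_{XY}||\qq_{XY})=\ii(\qq^*_{XY}||\widetilde{\qq^*}_{XY})+\sum_i\mathbb{E}_{\qq^*_X}\ii(\qq^*_{Y_i|X}||\qq_{Y_i|X})
+ \ii(\qq^*_X||\qq_X).
\]
Since $\qq^*_{XY}$ is already of product form, $\widetilde{\qq^*}_{XY}=\qq^*_{XY}$ and the first term is zero; moreover $\qq^*_X=\pp_X$ and $\qq^*_{Y_i|X}=\pp_{Y_i|X}$, so
\[
\ii(\qq^*_{XY}||\qq_{XY})=\sum_i\mathbb{E}_{\pp_X}\ii(\pp_{Y_i|X}||\qq_{Y_i|X})+\ii(\pp_X||\qq_X).
\]
Subtracting this from the first displayed identity and recognizing $\ii(\pp_{XY}||\widetilde{\pp}_{XY})=\ii(\pp_{XY}||\qq^*_{XY})$ gives precisely
\[
\ii(\pp_{XY}||\qq_{XY})= \ii(\pp_{XY}||\qq^*_{XY}) + \ii(\qq^*_{XY}||\qq_{XY}),
\]
as claimed. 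An alternative, more direct derivation writes $\log\frac{\dd\pp_{XY}}{\dd\qq_{XY}}=\log\frac{\dd\pp_{XY}}{\dd\qq^*_{XY}}+\log\frac{\dd\qq^*_{XY}}{\dd\qq_{XY}}$ and takes $\pp_{XY}$-expectations, the only point to check being that $\mathbb{E}_{\pp_{XY}}\log\frac{\dd\qq^*_{XY}}{\dd\qq_{XY}}=\mathbb{E}_{\qq^*_{XY}}\log\frac{\dd\qq^*_{XY}}{\dd\qq_{XY}}$; this holds because $\frac{\dd\qq^*_{XY}}{\dd\qq_{XY}}$ is a function of $(X,Y)$ through $\pp_X$ and the $\pp_{Y_i|X}$ only, and $\pp_{XY}$ and $\qq^*_{XY}$ share the $X$-marginal $\pp_X$ and all the conditionals $\pp_{Y_i|X}$, so the integrand has the same distribution under both measures.

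The main obstacle, such as it is, is bookkeeping around the measure-theoretic regularity: verifying the existence of conditional Radon--Nikodym derivatives and the absolute continuity $\pp_{XY}\ll\qq^*_{XY}$, and justifying the tower-property manipulations with conditional expectations — but all of this is already handled by Lemma~\ref{lemma:xyi} and Lemma~\ref{lemma:xcondy}, so once those are invoked the proof is essentially a two-line computation. No genuinely hard estimate is involved; the content is entirely in the conditional-independence decomposition already proved.
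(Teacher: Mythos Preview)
Your proof is correct and follows essentially the same approach as the paper: both apply Lemma~\ref{lemma:xyi} once to identify the minimizer $\qq^*_{XY}=\widetilde{\pp}_{XY}$, then a second time with $\qq^*_{XY}$ in the first argument (noting $\widetilde{\qq^*}_{XY}=\qq^*_{XY}$) to obtain the Pythagorean rule by adding the two identities. Your additional remarks on absolute continuity and the alternative direct derivation via $\log\frac{\dd\pp_{XY}}{\dd\qq_{XY}}=\log\frac{\dd\pp_{XY}}{\dd\qq^*_{XY}}+\log\frac{\dd\qq^*_{XY}}{\dd\qq_{XY}}$ are not in the paper but are welcome sanity checks.
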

%%%%%%%%%%%%%%%%%%%%%%%%%%%%%%%%%%%%%%%%%%%%%%%%%%%%%%%%%%%%%%%%%%%%%%%%%%%%%%%%%%%%%%%%%
%\begin{proposition}\label{prop:div2min}
%The minimum of $\ii(\pp_{XY}||\qq_{XY})$ over all distributions
%$\qq_{XY}$ that make the $Y_i$ conditionally independent given $X$,
%is obtained for $\qq^*_{XY}=\widetilde{\pp}_{XY}$. Moreover, also in
%this case a Pythagorean rule holds. One has
%\[
%\ii(\pp_{XY}||\qq_{XY})= \ii(\pp_{XY}||\qq^*_{XY}) +
%\ii(\qq^*_{XY}||\qq_{XY}).
%\]
%\end{proposition}
\begin{proof}
{From} the right hand side of the  identity in Lemma~\ref{lemma:xyi}
we see that the first I-divergence is not involved in the
minimization, whereas the other two can be made equal to zero, by
selecting $\qq_{Y_i|X}=\pp_{Y_i|X}$ and $\qq_X=\pp_X$. This shows
that the
minimizing $\qq^*_{XY}$ is equal to $\widetilde{\pp}_{XY}$. \\
To prove the Pythagorean rule, we first observe that trivially
\begin{equation}\label{eq:d1}
\ii(\pp_{XY}|\qq^*_{XY})=\ii(\pp_{XY}|\widetilde{\pp}_{XY}).
\end{equation}
Next we apply the identity in Lemma~\ref{lemma:xyi} with
$\qq^*_{XY}$ replacing $\pp_{XY}$. In this case the corresponding
$\widetilde{\qq}^*_{XY}$ obviously equals $\qq^*_{XY}$ itself. Hence
the identity reads
\begin{align}
\ii(\qq^*_{XY}||\qq_{XY})&=\sum_i\mathbb{E}_{\qq^*_X}\ii(\qq^*_{Y_i|X}||\qq_{Y_i|X})
+ \ii(\qq^*_X ||\qq_X) \nonumber\\
& = \sum_i\mathbb{E}_{\pp_X}\ii(\pp_{Y_i|X}||\qq_{Y_i|X}) +
\ii(\pp_X||\qq_X),\label{eq:d22}
\end{align}
by definition of $\qq^*_{XY}$. Adding up equations~(\ref{eq:d1})
and~(\ref{eq:d22}) gives the result.
\end{proof}

\section{Proof of the technical results}\label{section:tech}
\setcounter{equation}{0}

{\sc Proof of Proposition~\ref{prop:exist}.} \emph{Existence of the
minimum.} Let $(H_0, D_0)$ be arbitrary. Perform one iteration of
the algorithm to get $(H_1, D_1)$ with
$\ii(\widehat{\Sigma}||H_1H_1^\top +D_1)\leq
\ii(\widehat{\Sigma}||H_0H_0^\top +D_0)$. Moreover, from
Proposition~\ref{prop:properties}, $H_1H_1^\top \leq
\widehat{\Sigma}$ and $D_1\leq \Delta(\widehat{\Sigma})$. Hence the
search for a minimum can be restricted to the set of matrices
$(H,D)$ satisfying $HH^\top \leq \widehat{\Sigma}$ and $D\leq
\Delta(\widehat{\Sigma})$. We claim that the search for a minimum
can be further restricted to the set of $(H, D)$ such that $HH^\top
+D\geq \eps I$ for some sufficiently small $\eps>0$. Indeed, if the
last inequality is violated, then   $HH^\top +D$ has at least one
eigenvalue less than $\eps$. Assume this is the case, write $HH^\top
+ D=U\Lambda U^\top$, the Jordan decomposition of $HH^\top + D$, and
let $\widehat{\Sigma} = U \Sigma_U U^\top$. Then
$\ii(\widehat{\Sigma}||HH^\top + D)=\ii(\Sigma_U||\Lambda)$, as one
easily verifies. Denoting by $\lambda_i$ the eigenvalues of $HH^\top
+ D$, $\lambda_{i_0}$ the smallest among them, and by $\sigma_{ii}$
the diagonal elements of $\Sigma_U$, we have that $\ii(\Sigma_U|
\Lambda) = \half \sum_i \left(\log\lambda_i +
\frac{\sigma_{ii}}{\lambda_i}\right) - \half \log|\Sigma_U| -
\frac{n}{2}$. Choose $\eps$ smaller than $c := \min_i \sigma_{ii}
>0$, since $\widehat \Sigma >0$. Then the contribution of $i=i_0$ in
the summation is larger than $\log \eps + \frac{c}{\eps}$ which
tends to infinity for $\eps \to 0$. Hence the claim is verified.
%Let $\lambda_{i_0}$ be a minimum eigenvalue and take $\eps$ smaller than
%the minimum of all $\sigma_{ii}$, which is positive, since
%$\widehat{\Sigma}$ is strictly positive definite. Then the
%contribution to the divergence
%$\ii(\Sigma_U||\Lambda)$ from index $i=i_0$ in the summation is at least $\log\eps +1$,
%which tends to infinity for $\eps\to 0$.
This shows that a minimizing pair $(H, D)$ has to satisfy
$HH^\top\leq \widehat{\Sigma}$, $D\leq \Delta(\widehat{\Sigma})$,
and $HH^\top + D\geq \eps I$, for some $\eps > 0$. In other words we
have to minimize the I-divergence over a compact set on which it is
clearly continuous. This proves Proposition~\ref{prop:exist}.
\mbox{}~\hfill$\square$
\medskip

{\sc Proof of Proposition~\ref{prop:pqq}.} \emph{Relation between
the original and the lifted problem.} Let
$\Sigma_1=\Sigma(H,D,Q)$. With $\Sigma^*=\Sigma^*(\Sigma_1)$, the
optimal solution of the partial minimization over $\bs_0$, we have
for any $\Sigma_0\in\bs_0$, using~(\ref{eq:011}) in the first
equality below,
\begin{align*}
\ii(\Sigma_0||\Sigma_1)& \geq \ii(\Sigma^*||\Sigma_1) \\
& = \ii(\widehat{\Sigma}||HH^\top + D) \\
& \geq  \inf_{H,D}\ii(\widehat{\Sigma}||HH^\top + D).
\end{align*}
It follows that
$\inf_{\Sigma_0\in\bs_0,\Sigma_1\in\bs_1}\ii(\Sigma_0 ||
\Sigma_1)\geq\min_{H,D}\ii(\widehat{\Sigma}||HH^\top + D)$, since
the minimum exists in view of Proposition~\ref{prop:exist}.
\\
Conversely, let $(H^*,D^*)$ be the minimizer of $(H,D)\mapsto
\ii(\widehat{\Sigma}||HH^\top+D)$, which exists by
Proposition~\ref{prop:exist}, and let $\Sigma^*=\Sigma(H^*,D^*,Q^*)$
be a corresponding element in $\bs_1$. Furthermore, let
$\Sigma^{**}\in\bs_0$ be the minimizer of $\Sigma_0\mapsto
\ii(\Sigma_0||\Sigma^*)$ over $\bs_0$. Then we have
\begin{align*}
\ii(\widehat{\Sigma}||H^*H^{*^\top}+D^*) & = \ii(\Sigma^{**}|| \Sigma^*)\\
& \geq
\inf_{\Sigma_0\in\bs_0,\Sigma_1\in\bs_1}\ii(\Sigma_0||\Sigma_1),
\end{align*}
which shows the other inequality. Finally, we prove that the the
infimum can be replaced with a minimum. Thereto we will explicitly
construct a minimizer in terms of $(H^*,D^*)$.  For any invertible
$Q^*$ let $\Sigma^*=\Sigma(H^*,D^*,Q^*)$. Performing the first
partial minimization, we obtain an optimal $\Sigma^{**}\in\bs_0$,
with the property (see (\ref{eq:011})) that $\ii(\Sigma^{**}|
\Sigma^*)=\ii(\widehat{\Sigma}||H^*H^{*^\top}+D^*)$.
\mbox{}\hfill$\square$
\medskip

{\sc Proof of Proposition~\ref{prop:pm1}.} \emph{First partial
minimization.} Consider the setup and the notation of
Proposition~\ref{prop:gpm1}. Identify $\qq$ with the normal
$N(0,\Sigma)$, and $\pp$ with $N(0,\Sigma_0)$. By virtue
of~(\ref{eq:rn}), the optimal $\pp^*$ is a zero mean normal whose
covariance matrix can be computed using the properties of
conditional normal distributions (see appendix~\ref{gauss}). In
particular
\begin{align*}
\Sigma^*_{21}  = \mathbb{E}_{\pp^*}XY^\top  & =
 \mathbb{E}_{\pp^*}(\mathbb{E}_{\pp^*}[X|Y]Y^\top)
\\
& =\mathbb{E}_{\pp^*}(\mathbb{E}_\qq[X|Y]Y^\top)
\\
& =\mathbb{E}_{\pp^*}(\Sigma_{21}\Sigma_{11}^{-1}YY^\top) \\
& = \Sigma_{21}\Sigma_{11}^{-1}\mathbb{E}_{\pp^0}YY^\top
\\
& =\Sigma_{21}\Sigma_{11}^{-1}\widehat{\Sigma}.
\end{align*}
Likewise
\begin{align*}\Sigma^*_{22}  = \mathbb{E}_{\pp^*}XX^\top & = \cov_{\pp^*}(X|Y) + \mathbb{E}_{\pp^*}
(\mathbb{E}_{\pp^*}[X|Y]\mathbb{E}_{\pp^*}[X|Y]^\top) \\
& = \cov_{\qq}(X|Y) + \mathbb{E}_{\pp^*}
(\mathbb{E}_{\qq}[X|Y]\mathbb{E}_{\qq}[X|Y]^\top) \\
& = \Sigma_{22}-\Sigma_{21}\Sigma_{11}^{-1}\Sigma_{12} +
\mathbb{E}_{\pp^*}
(\Sigma_{21}\Sigma_{11}^{-1}Y(\Sigma_{21}\Sigma_{11}^{-1}Y)^\top) \\
& = \Sigma_{22}-\Sigma_{21}\Sigma_{11}^{-1}\Sigma_{12} +
\mathbb{E}_{\pp^0}
(\Sigma_{21}\Sigma_{11}^{-1}YY^\top\Sigma_{11}^{-1}\Sigma_{12}) \\
& = \Sigma_{22}-\Sigma_{21}\Sigma_{11}^{-1}\Sigma_{12} +
\Sigma_{21}\Sigma_{11}^{-1}\widehat{\Sigma}\Sigma_{11}^{-1}\Sigma_{12}.
\end{align*}
To prove that $\Sigma^*$ is strictly positive note first that
$\Sigma^*_{11} = \widehat \Sigma >0$ by assumption. To conclude,
since $\Sigma >0$, it is enough to note that
\[
\Sigma^*_{22}-\Sigma^*_{21} (\Sigma^*_{11})^{-1}\Sigma^*_{12} \, =
\, \Sigma_{22}-\Sigma_{21}\Sigma_{11}^{-1}\Sigma_{12}
\]
Finally, the relation
$\ii(\Sigma^*||\Sigma)=\ii(\widehat{\Sigma}||\Sigma_{11})$ is
Equation~(\ref{eq:pq0}) adapted to the present situation. The
Pythagorean rule follows from this relation and
Equation~(\ref{eq:divsplit0}). \mbox{}~\hfill$\square$
\medskip

{\sc Proof of Proposition~\ref{prop:pm2}.} \emph{Second partial
minimization.} We adhere to the setting and the notation of
Proposition~\ref{prop:div2min}. Identify $\pp=\pp_{XY}$ with the
normal distribution $N(0, \Sigma)$ and $\qq=\qq_{XY}$ with the
normal $N(0, \Sigma_1)$, where $\Sigma_1\in\bs_1$. The optimal
$\qq^*=\qq^*_{XY}$ is again normal and specified by its
(conditional) mean and covariance matrix. Since
$\qq^*_{Y_i|X}=\pp_{Y_i|X}$ for all $i$, we have
$\mathbb{E}_{\qq^*}[Y|X]=\mathbb{E}_{\pp}[Y|X]=\Sigma_{12}\Sigma_{22}^{-1}X$,
moreover $\qq^*_X = \pp_X$. Hence we find
$$\Sigma^*_{12} = \mathbb{E}_{\qq^*} YX^\top =
\mathbb{E}_{\qq^*}\mathbb{E}_{\qq^*}[Y|X]X^\top
=\mathbb{E}_{\pp}\mathbb{E}_{\pp}[Y|X]X^\top =\Sigma_{12}.$$
Furthermore, under $\qq^*$, the $Y_i$ are conditionally
independent given $X$. Hence $\cov_{\qq^*}(Y_i,Y_j|X)=0$, for
$i\neq j$, whereas $\mathbb{V}{\rm
ar}_{\qq^*}(Y_i|X)=\mathbb{V}{\rm ar}_{\pp}(Y_i|X)$, which is the
${ii}$-element of
$(\Sigma_{11}-\Sigma_{12}\Sigma_{22}^{-1}\Sigma_{21})$, it follows
that
$$\cov_{\qq^*}(Y|X)=\Delta(\Sigma_{11}-\Sigma_{12}\Sigma_{22}^{-1}\Sigma_{21}).$$
We can now evaluate
\begin{align*}
\Sigma^*_{11} = \cov_{\qq^*}(Y) & = \mathbb{E}_{\qq^*}YY^\top \\
& = \mathbb{E}_{\qq^*}( \ee_{\qq^*}[Y|X]\ee[Y|X]^\top + \cov_{\qq^*}(Y|X) ) \\
& =\mathbb{E}_{\qq^*}(\Sigma_{12}\Sigma_{22}^{-1}XX^\top
\Sigma_{22}^{-1}\Sigma_{21} +
\Delta(\Sigma_{11}-\Sigma_{12}\Sigma_{22}^{-1}\Sigma_{21}) ) \\
& =\Sigma_{12}\Sigma_{22}^{-1}\Sigma_{21} +
\Delta(\Sigma_{11}-\Sigma_{12}\Sigma_{22}^{-1}\Sigma_{21}).
\end{align*}
The Pythagorean rule follows from the general result of
Proposition~\ref{prop:div2min}. \hfill$\square$
\medskip

{\sc Proof of Proposition~\ref{prop:sigma0}.} \emph{Constrained
second partial minimization.} Lemma~\ref{lemma:xyi} and
Proposition~\ref{prop:div2min} still apply, with the proviso that
the marginal distribution of $X$ is fixed at some $\qq^0_X$. The
optimal distribution $\qq^*_{XY}$ will therefore take the form
$\qq^*_{XY}= \prod_i \pp_{Y_i|X} \qq^0_X$.
%In this case, there is in general no
%Pythagorean rule, as in Proposition~\ref{prop:div2min}, for
%instance. But instead we have, in abstract terms, the relation
%\begin{equation}\label{eq:cdiv}
%\ii(\pp^{YX}||\qq^{YX})-\ii(\pp^{YX}||\qq_*^{YX})=\sum_i \ee_{\pp^X}
%\ii(\qq^{*^{Y_i|X}}||\qq^{Y_i|X}),
%\end{equation}
%which easily follows from Lemma~\ref{lemma:xyi}.
%\medskip\\
Turning to the explicit computation of the optimal normal law,
inspection of the proof of Proposition~\ref{prop:pm2} reveals that
under $\qq^*$ we have $\mathbb{E}_{\qq^*} YX^\top =
\Sigma_{12}\Sigma_{22}^{-1}P_0$ and
\[
\cov_{\qq^*}(Y)=\Delta(\Sigma_{11}-\Sigma_{12}\Sigma_{22}^{-1}\Sigma_{21})+
\Sigma_{12}\Sigma_{22}^{-1}P_0\Sigma_{22}^{-1}\Sigma_{21}.
\]
\hfill $\square$

\medskip
{\sc Proof of Lemma~\ref{lemma:divdecomposition}.} \emph{Technical
decomposition of the I-divergence.} Recall the following notation.
\begin{align}
H = & \begin{pmatrix}
H_1 \\
H_2
\end{pmatrix} \\
D = & \begin{pmatrix}
D_1 & 0 \\
0 & D_2
\end{pmatrix}, \\
\widehat \Sigma = & \begin{pmatrix} \widehat \Sigma_{11} & \widehat \Sigma_{12} \\
\widehat \Sigma_{21} & \widehat \Sigma_{22}
\end{pmatrix}, \\
{\widetilde \Sigma}_{11} = & \widehat \Sigma_{11} - \widehat
\Sigma_{12} \widehat \Sigma_{22}^{-1} \widehat \Sigma_{21},
\end{align}
where $H_1\in\R^{n_1\times k}$, $H_2\in\R^{n_2\times k}$,
$D_1\in\R^{n_1\times n_1}$ and $D_2\in\R^{n_2\times n_2}$.
\medskip\\
Define $S=H_1(I-H_2^\top(H_2H_2^\top + D_2)^{-1}H_2)H_1^\top+D_1$
and
$K=\widehat{\Sigma}_{12}\widehat{\Sigma}_{22}^{-1}-H_1H_2^\top(H_2H_2^\top+D_2)^{-1}$.
From Lemma~\ref{lemma:xcondy} we obtain that
$\ii(\widehat{\Sigma}||HH^\top + D)$ is the sum of
$\ii(\Sigma_{22}||H_2H_2^\top +D_2)$ and an expected I-divergence
between conditional distributions. The latter can be computed
according to Equation~(\ref{eq:divmu}), and gives the decomposition
result
\begin{equation}\label{eq:ddt}
\ii(\widehat{\Sigma}||HH^\top + D) =
\ii(\widehat{\Sigma}_{22}||H_2H_2^\top +D_2) +
\ii(\widetilde{\Sigma}_{11}||S)  + \half{\rm
tr}\{S^{-1}K\widehat{\Sigma}_{22}K^\top\}.
\end{equation}
The assertion of Lemma~\ref{lemma:divdecomposition} is then obtained
by taking $D_2=0$ and the further decomposition of $H_1$ and $H_2$
as in~(\ref{eq:h21}). \hfill$\square$

\bigskip
\bibliography{lista_FA}
\bibliographystyle{imsart-nameyear}

%%%%%%%%%%%%%%%%%%%%%%%%%%%%%%%%%%%%%%%%%%%%%%%%%%%%%%%%%%%%%%%%%%%%%%%%%%%%%%%%%
%%%%%%%%%%%%%%%%%%%%%%%%%%%%%%%%%%%%%%%%%%%%%%%%%%%%%%%%%%%%%%%%%%%%%%%%%%%%%%%%%
%%%%%%%%%%%%%%%%%%%%%%%%%%%%%%%%%%%%%%%%%%%%%%%%%%%%%%%%%%%%%%%%%%%%%%%%%%%%%%%%%

\end{document}